 \def\dated#1{\def\thedate{#1}}
\newbox\Label%
\newdimen\high%
\newdimen\deep%
\newdimen\ul%
\newdimen\wdth
\def\ratchet#1#2{\ifnum#1<#2\global #1=#2\fi}%
\def\ifnextchar#1#2#3{\let\@tempe
#1\def\@tempa{#2}\def\@tempb{#3}\futurelet
    \@tempc\@ifnch}%
\def\@ifnch{\ifx \@tempc \@sptoken \let\@tempd\@xifnch
      \else \ifx \@tempc \@tempe\let\@tempd\@tempa\else\let\@tempd\@tempb\fi
      \fi \@tempd}%
\def\:{\let\@sptoken= } \:  % this makes \@sptoken a space token
\def\:{\@xifnch} \expandafter\def\: {\futurelet\@tempc\@ifnch}%
\let\ifnextchar\@ifnextchar
\def\scalefactor#1{\ul=#1\ul \X@xbase=#1\X@xbase \Y@ybase=#1\Y@ybase}%
\def\fontscale#1{%
\if#1h\relax
\font\xydashfont=xydash10 scaled \magstephalf
\font\xyatipfont=xyatip10 scaled \magstephalf
\font\xybtipfont=xybtip10 scaled \magstephalf
\font\xybsqlfont=xybsql10 scaled \magstephalf
\font\xycircfont=xycirc10 scaled \magstephalf
\else
\font\xydashfont=xydash10 scaled \magstep#1%
\font\xyatipfont=xyatip10 scaled \magstep#1%
\font\xybtipfont=xybtip10 scaled \magstep#1%
\font\xybsqlfont=xybsql10 scaled \magstep#1%
\font\xycircfont=xycirc10 scaled \magstep#1%
\fi}
\let\bfig\xy%
\let\efig\endxy%
\def\car#1#2\nil{#1}%
\def\morphism{\ifnextchar({\morphismp}{\morphismp(0,0)}}%
\def\morphismp(#1){\ifnextchar|{\morphismpp(#1)}{\morphismpp(#1)|a|}}%
\def\morphismpp(#1)|#2|{\ifnextchar/{\morphismppp(#1)|#2|}%
    {\morphismppp(#1)|#2|/>/}}%
\def\morphismppp(#1)|#2|/#3/{%
    \ifnextchar<{\morphismpppp(#1)|#2|/#3/}%
    {\morphismpppp(#1)|#2|/#3/<\default,0>}}%
\def\morphismpppp(#1,#2)|#3|/#4/<#5,#6>[#7`#8;#9]{%
\xend#1\advance \xend by #5%
\yend#2\advance \yend by #6%
\domorphism(#1,#2)|#3|/#4/<#5,#6>[#7`#8;#9]}
\def\domorphism(#1,#2)|#3|/#4/<#5,#6>[#7`#8;#9]{%
\edef\next{#4}%
\ifx\next\empty\relax
 \POS(#1,#2)*+!!<0ex,.75ex>{#7}\ar@{} (\xend,\yend)*+!!<0ex,.75ex>{#8}%
\else \def\next{\car#4\nil}\fi
\if@\next\relax
 \if#3l%
  \ifnum #6>0%
   \POS(#1,#2)*+!!<0ex,.75ex>{#7}\ar#4^-{#9} (\xend,\yend)*+!!<0ex,.75ex>{#8}%
  \else%
   \POS(#1,#2)*+!!<0ex,.75ex>{#7}\ar#4_-{#9} (\xend,\yend)*+!!<0ex,.75ex>{#8}%
  \fi%
 \else \if#3m%
    \setbox0\hbox{$#9$}%
   \ifdim \wd0=0pt
     \POS(#1,#2)*+!!<0ex,.75ex>{#7}\ar#4 (\xend,\yend)*+!!<0ex,.75ex>{#8}%
   \else
     \setbox\Label=\hbox{\kern .5pt $\labelstyle #9$\kern .5pt}%
     \high=\ht\Label \advance\high by 2pt \ht\Label=\high%
     \deep=\dp\Label \advance\deep by 2pt \dp\Label=\deep%
     \POS(#1,#2)*+!!<0ex,.75ex>{#7}\ar#4|-{\box\Label} (\xend,\yend)*+!!<0ex,.75ex>{#8}%
   \fi
 \else \if#3r%
  \ifnum #6<0%
   \POS(#1,#2)*+!!<0ex,.75ex>{#7}\ar#4^-{#9} (\xend,\yend)*+!!<0ex,.75ex>{#8}%
  \else%
   \POS(#1,#2)*+!!<0ex,.75ex>{#7}\ar#4_-{#9} (\xend,\yend)*+!!<0ex,.75ex>{#8}%
  \fi%
 \else \if#3a%
  \ifnum #5>0%
   \POS(#1,#2)*+!!<0ex,.75ex>{#7}\ar#4^-{#9} (\xend,\yend)*+!!<0ex,.75ex>{#8}%
  \else%
   \POS(#1,#2)*+!!<0ex,.75ex>{#7}\ar#4_-{#9} (\xend,\yend)*+!!<0ex,.75ex>{#8}%
  \fi%
 \else \if#3b%
  \ifnum #5<0%
   \POS(#1,#2)*+!!<0ex,.75ex>{#7}\ar#4^-{#9} (\xend,\yend)*+!!<0ex,.75ex>{#8}%
  \else%
   \POS(#1,#2)*+!!<0ex,.75ex>{#7}\ar#4_-{#9} (\xend,\yend)*+!!<0ex,.75ex>{#8}%
  \fi%
 \else
   \POS(#1,#2)*+!!<0ex,.75ex>{#7}\ar#4 (\xend,\yend)*+!!<0ex,.75ex>{#8}%
 \fi\fi\fi\fi\fi%
\else%
\edef\next{#4}%
 \ifx\next\empty\relax
  \POS(#1,#2)*+!!<0ex,.75ex>{#7}; (\xend,\yend)*+!!<0ex,.75ex>{#8}%
 \else \edef\next{#4}%
  \ifx\next\empty\relax
   \POS(#1,#2)*+!!<0ex,.75ex>{#7}\ar (\xend,\yend)*+!!<0ex,.75ex>{#8}%
 \else \if#3l%
  \ifnum #6>0%
   \POS(#1,#2)*+!!<0ex,.75ex>{#7}\ar@{#4}^-{#9} (\xend,\yend)*+!!<0ex,.75ex>{#8}%
  \else%
   \POS(#1,#2)*+!!<0ex,.75ex>{#7}\ar@{#4}_-{#9} (\xend,\yend)*+!!<0ex,.75ex>{#8}%
  \fi%
 \else \if#3m%
    \setbox0\hbox{$#9$}%
   \ifdim \wd0=0pt
     \POS(#1,#2)*+!!<0ex,.75ex>{#7}\ar@{#4} (\xend,\yend)*+!!<0ex,.75ex>{#8}%
   \else
     \setbox\Label=\hbox{\kern .5pt $\labelstyle #9$\kern .5pt}%
     \high=\ht\Label \advance\high by 2pt \ht\Label=\high%
     \deep=\dp\Label \advance\deep by 2pt \dp\Label=\deep%
     \POS(#1,#2)*+!!<0ex,.75ex>{#7}\ar@{#4}|-{\box\Label} (\xend,\yend)*+!!<0ex,.75ex>{#8}%
   \fi
 \else \if#3r%
  \ifnum #6<0%
   \POS(#1,#2)*+!!<0ex,.75ex>{#7}\ar@{#4}^-{#9} (\xend,\yend)*+!!<0ex,.75ex>{#8}%
  \else%
   \POS(#1,#2)*+!!<0ex,.75ex>{#7}\ar@{#4}_-{#9} (\xend,\yend)*+!!<0ex,.75ex>{#8}%
  \fi%
 \else \if#3a%
  \ifnum #5>0%
   \POS(#1,#2)*+!!<0ex,.75ex>{#7}\ar@{#4}^-{#9} (\xend,\yend)*+!!<0ex,.75ex>{#8}%
  \else%
   \POS(#1,#2)*+!!<0ex,.75ex>{#7}\ar@{#4}_-{#9} (\xend,\yend)*+!!<0ex,.75ex>{#8}%
  \fi%
 \else \if#3b%
  \ifnum #5<0%
   \POS(#1,#2)*+!!<0ex,.75ex>{#7}\ar@{#4}^-{#9} (\xend,\yend)*+!!<0ex,.75ex>{#8}%
  \else%
   \POS(#1,#2)*+!!<0ex,.75ex>{#7}\ar@{#4}_-{#9} (\xend,\yend)*+!!<0ex,.75ex>{#8}%
  \fi%
 \else
   \POS(#1,#2)*+!!<0ex,.75ex>{#7}\ar@{#4} (\xend,\yend)*+!!<0ex,.75ex>{#8}%
 \fi\fi\fi\fi\fi\fi\fi%
\fi\ignorespaces}%
\def\squarepppp(#1,#2)|#3|/#4`#5`#6`#7/<#8>[#9]{%
\xpos#1\ypos#2%
\def\next|##1##2##3##4|{%
 \def\xa{##1}\def\xb{##2}\def\xc{##3}\def\xd{##4}\ignorespaces}%
\next|#3|%
\def\next<##1,##2>{\deltax=##1\deltay=##2\ignorespaces}%
\next<#8>%
\def\next[##1`##2`##3`##4;##5`##6`##7`##8]{%
    \def\nodea{##1}\def\nodeb{##2}\def\nodec{##3}\def\noded{##4}%
    \def\labela{##5}\def\labelb{##6}\def\labelc{##7}\def\labeld{##8}\ignorespaces}%
\next[#9]%
\morphism(\xpos,\ypos)|\xd|/{#7}/<\deltax,0>[\nodec`\noded;\labeld]%
\advance \ypos by \deltay%
\morphism(\xpos,\ypos)|\xb|/{#5}/<0,-\deltay>[\nodea`\nodec;\labelb]%
\morphism(\xpos,\ypos)|\xa|/{#4}/<\deltax,0>[\nodea`\nodeb;\labela]%
 \advance \xpos by \deltax%
\morphism(\xpos,\ypos)|\xc|/{#6}/<0,-\deltay>[\nodeb`\noded;\labelc]%
\ignorespaces}%
\def\square{\ifnextchar({\squarep}{\squarep(0,0)}}%
\def\squarep(#1){\ifnextchar|{\squarepp(#1)}{\squarepp(#1)|alrb|}}%
\def\squarepp(#1)|#2|{\ifnextchar/{\squareppp(#1)|#2|}%
    {\squareppp(#1)|#2|/>`>`>`>/}}%
\def\squareppp(#1)|#2|/#3`#4`#5`#6/{%
    \ifnextchar<{\squarepppp(#1)|#2|/#3`#4`#5`#6/}%
    {\squarepppp(#1)|#2|/#3`#4`#5`#6/<\default,\default>}}%
\def\ptrianglepppp(#1,#2)|#3|/#4`#5`#6/<#7>[#8]{%
\xpos#1\ypos#2%
\def\next|##1##2##3|{\def\xa{##1}\def\xb{##2}\def\xc{##3}}%
\next|#3|%
\def\next<##1,##2>{\deltax=##1\deltay=##2\ignorespaces}%
\next<#7>%
\def\next[##1`##2`##3;##4`##5`##6]{%
    \def\nodea{##1}\def\nodeb{##2}\def\nodec{##3}%
    \def\labela{##4}\def\labelb{##5}\def\labelc{##6}}%
\next[#8]%
\advance\ypos by \deltay%
\morphism(\xpos,\ypos)|\xa|/{#4}/<\deltax,0>[\nodea`\nodeb;\labela]%
\morphism(\xpos,\ypos)|\xb|/{#5}/<0,-\deltay>[\nodea`\nodec;\labelb]%
\advance\xpos by \deltax%
\morphism(\xpos,\ypos)|\xc|/{#6}/<-\deltax,-\deltay>[\nodeb`\nodec;\labelc]%
\ignorespaces}%
\def\qtrianglepppp(#1,#2)|#3|/#4`#5`#6/<#7>[#8]{%
\xpos#1\ypos#2%
\def\next|##1##2##3|{\def\xa{##1}\def\xb{##2}\def\xc{##3}}%
\next|#3|%
\def\next<##1,##2>{\deltax=##1\deltay=##2\ignorespaces}%
\next<#7>%
\def\next[##1`##2`##3;##4`##5`##6]{%
    \def\nodea{##1}\def\nodeb{##2}\def\nodec{##3}%
    \def\labela{##4}\def\labelb{##5}\def\labelc{##6}}%
\next[#8]%
\advance\ypos by \deltay%
\morphism(\xpos,\ypos)|\xa|/{#4}/<\deltax,0>[\nodea`\nodeb;\labela]%
\morphism(\xpos,\ypos)|\xb|/{#5}/<\deltax,-\deltay>[\nodea`\nodec;\labelb]%
\advance\xpos by \deltax%
\morphism(\xpos,\ypos)|\xc|/{#6}/<0,-\deltay>[\nodeb`\nodec;\labelc]%
\ignorespaces}%
\def\dtrianglepppp(#1,#2)|#3|/#4`#5`#6/<#7>[#8]{%
\xpos#1\ypos#2%
\def\next|##1##2##3|{\def\xa{##1}\def\xb{##2}\def\xc{##3}}%
\next|#3|%
\def\next<##1,##2>{\deltax=##1\deltay=##2\ignorespaces}%
\next<#7>%
\def\next[##1`##2`##3;##4`##5`##6]{%
    \def\nodea{##1}\def\nodeb{##2}\def\nodec{##3}%
    \def\labela{##4}\def\labelb{##5}\def\labelc{##6}}%
\next[#8]%
\morphism(\xpos,\ypos)|\xc|/{#6}/<\deltax,0>[\nodeb`\nodec;\labelc]%
\advance\ypos by \deltay\advance \xpos by \deltax%
\morphism(\xpos,\ypos)|\xa|/{#4}/<-\deltax,-\deltay>[\nodea`\nodeb;\labela]%
\morphism(\xpos,\ypos)|\xb|/{#5}/<0,-\deltay>[\nodea`\nodec;\labelb]%
\ignorespaces}%
\def\btrianglepppp(#1,#2)|#3|/#4`#5`#6/<#7>[#8]{%
\xpos#1\ypos#2%
\def\next|##1##2##3|{\def\xa{##1}\def\xb{##2}\def\xc{##3}}%
\next|#3|%
\def\next<##1,##2>{\deltax=##1\deltay=##2\ignorespaces}%
\next<#7>%
\def\next[##1`##2`##3;##4`##5`##6]{%
    \def\nodea{##1}\def\nodeb{##2}\def\nodec{##3}%
    \def\labela{##4}\def\labelb{##5}\def\labelc{##6}}%
\next[#8]%
\morphism(\xpos,\ypos)|\xc|/{#6}/<\deltax,0>[\nodeb`\nodec;\labelc]%
\advance\ypos by \deltay%
\morphism(\xpos,\ypos)|\xa|/{#4}/<0,-\deltay>[\nodea`\nodeb;\labela]%
\morphism(\xpos,\ypos)|\xb|/{#5}/<\deltax,-\deltay>[\nodea`\nodec;\labelb]%
\ignorespaces}%
\def\Atrianglepppp(#1,#2)|#3|/#4`#5`#6/<#7>[#8]{%
\xpos#1\ypos#2%
\def\next|##1##2##3|{\def\xa{##1}\def\xb{##2}\def\xc{##3}}%
\next|#3|%
\def\next<##1,##2>{\deltax=##1\deltay=##2\ignorespaces}%
\next<#7>%
\def\next[##1`##2`##3;##4`##5`##6]{%
    \def\nodea{##1}\def\nodeb{##2}\def\nodec{##3}%
    \def\labela{##4}\def\labelb{##5}\def\labelc{##6}}%
\next[#8]%
\multiply\deltax by 2%
\morphism(\xpos,\ypos)|\xc|/{#6}/<\deltax,0>[\nodeb`\nodec;\labelc]%
\divide\deltax by 2
\advance\ypos by \deltay\advance\xpos by \deltax%
\morphism(\xpos,\ypos)|\xa|/{#4}/<-\deltax,-\deltay>[\nodea`\nodeb;\labela]%
\morphism(\xpos,\ypos)|\xb|/{#5}/<\deltax,-\deltay>[\nodea`\nodec;\labelb]%
\ignorespaces}%
\def\Vtrianglepppp(#1,#2)|#3|/#4`#5`#6/<#7>[#8]{%
\xpos#1\ypos#2%
\def\next|##1##2##3|{\def\xa{##1}\def\xb{##2}\def\xc{##3}}%
\next|#3|%
\def\next<##1,##2>{\deltax=##1\deltay=##2\ignorespaces}%
\next<#7>%
\def\next[##1`##2`##3;##4`##5`##6]{%
    \def\nodea{##1}\def\nodeb{##2}\def\nodec{##3}%
    \def\labela{##4}\def\labelb{##5}\def\labelc{##6}}%
\next[#8]%
\advance\ypos by \deltay%
\morphism(\xpos,\ypos)|\xb|/{#5}/<\deltax,-\deltay>[\nodea`\nodec;\labelb]%
\multiply\deltax by 2%
\morphism(\xpos,\ypos)|\xa|/{#4}/<\deltax,0>[\nodea`\nodeb;\labela]%
\advance\xpos by \deltax \divide \deltax by 2
\morphism(\xpos,\ypos)|\xc|/{#6}/<-\deltax,-\deltay>[\nodeb`\nodec;\labelc]%
\ignorespaces}%
\def\Ctrianglepppp(#1,#2)|#3|/#4`#5`#6/<#7>[#8]{%
\xpos#1\ypos#2%
\def\next|##1##2##3|{\def\xa{##1}\def\xb{##2}\def\xc{##3}}%
\next|#3|%
\def\next<##1,##2>{\deltax=##1\deltay=##2\ignorespaces}%
\next<#7>%
\def\next[##1`##2`##3;##4`##5`##6]{%
    \def\nodea{##1}\def\nodeb{##2}\def\nodec{##3}%
    \def\labela{##4}\def\labelb{##5}\def\labelc{##6}}%
\next[#8]%
\advance \ypos by \deltay%
\morphism(\xpos,\ypos)|\xc|/{#6}/<\deltax,-\deltay>[\nodeb`\nodec;\labelc]%
\advance\ypos by \deltay \advance \xpos by \deltax%
\morphism(\xpos,\ypos)|\xa|/{#4}/<-\deltax,-\deltay>[\nodea`\nodeb;\labela]%
\multiply\deltay by 2%
\morphism(\xpos,\ypos)|\xb|/{#5}/<0,-\deltay>[\nodea`\nodec;\labelb]%
\ignorespaces}%
\def\Dtrianglepppp(#1,#2)|#3|/#4`#5`#6/<#7>[#8]{%
\xpos#1\ypos#2%
\def\next|##1##2##3|{\def\xa{##1}\def\xb{##2}\def\xc{##3}}%
\next|#3|%
\def\next<##1,##2>{\deltax=##1\deltay=##2\ignorespaces}%
\next<#7>%
\def\next[##1`##2`##3;##4`##5`##6]{%
    \def\nodea{##1}\def\nodeb{##2}\def\nodec{##3}%
    \def\labela{##4}\def\labelb{##5}\def\labelc{##6}}%
\next[#8]%
\advance\xpos by \deltax \advance\ypos by \deltay%
\morphism(\xpos,\ypos)|\xc|/{#6}/<-\deltax,-\deltay>[\nodeb`\nodec;\labelc]%
\advance\xpos by -\deltax \advance\ypos by \deltay%
\morphism(\xpos,\ypos)|\xb|/{#5}/<\deltax,-\deltay>[\nodea`\nodeb;\labelb]%
\multiply \deltay by 2%
\morphism(\xpos,\ypos)|\xa|/{#4}/<0,-\deltay>[\nodea`\nodec;\labela]%
\ignorespaces}%
\def\ptrianglep(#1){\ifnextchar|{\ptrianglepp(#1)}{\ptrianglepp(#1)|alr|}}%
\def\ptrianglepp(#1)|#2|{\ifnextchar/{\ptriangleppp(#1)|#2|}%
    {\ptriangleppp(#1)|#2|/>`>`>/}}%
\def\ptriangleppp(#1)|#2|/#3`#4`#5/{%
    \ifnextchar<{\ptrianglepppp(#1)|#2|/#3`#4`#5/}%
    {\ptrianglepppp(#1)|#2|/#3`#4`#5/<\default,\default>}}%
\def\qtrianglep(#1){\ifnextchar|{\qtrianglepp(#1)}{\qtrianglepp(#1)|alr|}}%
\def\qtrianglepp(#1)|#2|{\ifnextchar/{\qtriangleppp(#1)|#2|}%
    {\qtriangleppp(#1)|#2|/>`>`>/}}%
\def\qtriangleppp(#1)|#2|/#3`#4`#5/{%
    \ifnextchar<{\qtrianglepppp(#1)|#2|/#3`#4`#5/}%
    {\qtrianglepppp(#1)|#2|/#3`#4`#5/<\default,\default>}}%
\def\dtrianglep(#1){\ifnextchar|{\dtrianglepp(#1)}{\dtrianglepp(#1)|lrb|}}%
\def\dtrianglepp(#1)|#2|{\ifnextchar/{\dtriangleppp(#1)|#2|}%
    {\dtriangleppp(#1)|#2|/>`>`>/}}%
\def\dtriangleppp(#1)|#2|/#3`#4`#5/{%
    \ifnextchar<{\dtrianglepppp(#1)|#2|/#3`#4`#5/}%
    {\dtrianglepppp(#1)|#2|/#3`#4`#5/<\default,\default>}}%
\def\btrianglep(#1){\ifnextchar|{\btrianglepp(#1)}{\btrianglepp(#1)|lrb|}}%
\def\btrianglepp(#1)|#2|{\ifnextchar/{\btriangleppp(#1)|#2|}%
    {\btriangleppp(#1)|#2|/>`>`>/}}%
\def\btriangleppp(#1)|#2|/#3`#4`#5/{%
    \ifnextchar<{\btrianglepppp(#1)|#2|/#3`#4`#5/}%
    {\btrianglepppp(#1)|#2|/#3`#4`#5/<\default,\default>}}%
\def\Atrianglep(#1){\ifnextchar|{\Atrianglepp(#1)}{\Atrianglepp(#1)|lrb|}}%
\def\Atrianglepp(#1)|#2|{\ifnextchar/{\Atriangleppp(#1)|#2|}%
    {\Atriangleppp(#1)|#2|/>`>`>/}}%
\def\Atriangleppp(#1)|#2|/#3`#4`#5/{%
    \ifnextchar<{\Atrianglepppp(#1)|#2|/#3`#4`#5/}%
    {\Atrianglepppp(#1)|#2|/#3`#4`#5/<\default,\default>}}%
\def\Vtrianglep(#1){\ifnextchar|{\Vtrianglepp(#1)}{\Vtrianglepp(#1)|alb|}}%
\def\Vtrianglepp(#1)|#2|{\ifnextchar/{\Vtriangleppp(#1)|#2|}%
    {\Vtriangleppp(#1)|#2|/>`>`>/}}%
\def\Vtriangleppp(#1)|#2|/#3`#4`#5/{%
    \ifnextchar<{\Vtrianglepppp(#1)|#2|/#3`#4`#5/}%
    {\Vtrianglepppp(#1)|#2|/#3`#4`#5/<\default,\default>}}%
\def\Ctrianglep(#1){\ifnextchar|{\Ctrianglepp(#1)}{\Ctrianglepp(#1)|arb|}}%
\def\Ctrianglepp(#1)|#2|{\ifnextchar/{\Ctriangleppp(#1)|#2|}%
    {\Ctriangleppp(#1)|#2|/>`>`>/}}%
\def\Ctriangleppp(#1)|#2|/#3`#4`#5/{%
    \ifnextchar<{\Ctrianglepppp(#1)|#2|/#3`#4`#5/}%
    {\Ctrianglepppp(#1)|#2|/#3`#4`#5/<\default,\default>}}%
\def\Dtrianglep(#1){\ifnextchar|{\Dtrianglepp(#1)}{\Dtrianglepp(#1)|alb|}}%
\def\Dtrianglepp(#1)|#2|{\ifnextchar/{\Dtriangleppp(#1)|#2|}%
    {\Dtriangleppp(#1)|#2|/>`>`>/}}%
\def\Dtriangleppp(#1)|#2|/#3`#4`#5/{%
    \ifnextchar<{\Dtrianglepppp(#1)|#2|/#3`#4`#5/}%
    {\Dtrianglepppp(#1)|#2|/#3`#4`#5/<\default,\default>}}%
\def\Atrianglepairpppp(#1)|#2|/#3`#4`#5`#6`#7/<#8>[#9]{%
\def\next(##1,##2){\xpos##1\ypos##2}%
\next(#1)%
\def\next|##1##2##3##4##5|{\def\xa{##1}\def\xb{##2}%
\def\xc{##3}\def\xd{##4}\def\xe{##5}}%
\next|#2|%
\def\next<##1,##2>{\deltax=##1\deltay=##2\ignorespaces}%
\next<#8>%
\def\next[##1`##2`##3`##4;##5`##6`##7`##8`##9]{%
 \def\nodea{##1}\def\nodeb{##2}\def\nodec{##3}\def\noded{##4}%
 \def\labela{##5}\def\labelb{##6}\def\labelc{##7}\def\labeld{##8}\def\labele{##9}}%
\next[#9]%
\morphism(\xpos,\ypos)|\xd|/{#6}/<\deltax,0>[\nodeb`\nodec;\labeld]%
\advance\xpos by \deltax%
\morphism(\xpos,\ypos)|\xe|/{#7}/<\deltax,0>[\nodec`\noded;\labele]%
\advance\ypos by \deltay%
\morphism(\xpos,\ypos)|\xa|/{#3}/<-\deltax,-\deltay>[\nodea`\nodeb;\labela]%
\morphism(\xpos,\ypos)|\xb|/{#4}/<0,-\deltay>[\nodea`\nodec;\labelb]%
\morphism(\xpos,\ypos)|\xc|/{#5}/<\deltax,-\deltay>[\nodea`\noded;\labelc]%
\ignorespaces}%
\def\Vtrianglepairpppp(#1)|#2|/#3`#4`#5`#6`#7/<#8>[#9]{%
\def\next(##1,##2){\xpos##1\ypos##2}%
\next(#1)%
\def\next|##1##2##3##4##5|{\def\xa{##1}\def\xb{##2}%
\def\xc{##3}\def\xd{##4}\def\xe{##5}}%
\next|#2|%
\def\next<##1,##2>{\deltax=##1\deltay=##2\ignorespaces}%
\next<#8>%
\def\next[##1`##2`##3`##4;##5`##6`##7`##8`##9]{%
 \def\nodea{##1}\def\nodeb{##2}\def\nodec{##3}\def\noded{##4}%
 \def\labela{##5}\def\labelb{##6}\def\labelc{##7}\def\labeld{##8}\def\labele{##9}}%
\next[#9]%
\advance\ypos by \deltay%
\morphism(\xpos,\ypos)|\xa|/{#3}/<\deltax,0>[\nodea`\nodeb;\labela]%
\morphism(\xpos,\ypos)|\xc|/{#5}/<\deltax,-\deltay>[\nodea`\noded;\labelc]%
\advance\xpos by \deltax%
\morphism(\xpos,\ypos)|\xb|/{#4}/<\deltax,0>[\nodeb`\nodec;\labelb]%
\morphism(\xpos,\ypos)|\xd|/{#6}/<0,-\deltay>[\nodeb`\noded;\labeld]%
\advance\xpos by \deltax%
\morphism(\xpos,\ypos)|\xe|/{#7}/<-\deltax,-\deltay>[\nodec`\noded;\labele]%
\ignorespaces}%
\def\Ctrianglepairpppp(#1)|#2|/#3`#4`#5`#6`#7/<#8>[#9]{%
\def\next(##1,##2){\xpos##1\ypos##2}%
\next(#1)%
\def\next|##1##2##3##4##5|{\def\xa{##1}\def\xb{##2}%
\def\xc{##3}\def\xd{##4}\def\xe{##5}}%
\next|#2|%
\def\next<##1,##2>{\deltax=##1\deltay=##2\ignorespaces}%
\next<#8>%
\def\next[##1`##2`##3`##4;##5`##6`##7`##8`##9]{%
 \def\nodea{##1}\def\nodeb{##2}\def\nodec{##3}\def\noded{##4}%
 \def\labela{##5}\def\labelb{##6}\def\labelc{##7}\def\labeld{##8}\def\labele{##9}}%
\next[#9]%
\advance\ypos by \deltay%
\morphism(\xpos,\ypos)|\xe|/{#7}/<0,-\deltay>[\nodec`\noded;\labele]%
\advance\xpos by -\deltax%
\morphism(\xpos,\ypos)|\xc|/{#5}/<\deltax,0>[\nodeb`\nodec;\labelc]%
\morphism(\xpos,\ypos)|\xd|/{#6}/<\deltax,-\deltay>[\nodeb`\noded;\labeld]%
\advance\ypos by \deltay%
\advance\xpos by \deltax%
\morphism(\xpos,\ypos)|\xa|/{#3}/<-\deltax,-\deltay>[\nodea`\nodeb;\labela]%
\morphism(\xpos,\ypos)|\xb|/{#4}/<0,-\deltay>[\nodea`\nodec;\labelb]%
\ignorespaces}%
\def\Dtrianglepairpppp(#1)|#2|/#3`#4`#5`#6`#7/<#8>[#9]{%
\def\next(##1,##2){\xpos##1\ypos##2}%
\next(#1)%
\def\next|##1##2##3##4##5|{\def\xa{##1}\def\xb{##2}%
\def\xc{##3}\def\xd{##4}\def\xe{##5}}%
\next|#2|%
\def\next<##1,##2>{\deltax=##1\deltay=##2\ignorespaces}%
\next<#8>%
\def\next[##1`##2`##3`##4;##5`##6`##7`##8`##9]{%
 \def\nodea{##1}\def\nodeb{##2}\def\nodec{##3}\def\noded{##4}%
 \def\labela{##5}\def\labelb{##6}\def\labelc{##7}\def\labeld{##8}\def\labele{##9}}%
\next[#9]%
\advance\ypos by \deltay%
\morphism(\xpos,\ypos)|\xc|/{#5}/<\deltax,0>[\nodeb`\nodec;\labelc]%
\morphism(\xpos,\ypos)|\xd|/{#6}/<0,-\deltay>[\nodeb`\noded;\labeld]%
\advance\ypos by \deltay%
\morphism(\xpos,\ypos)|\xa|/{#3}/<0,-\deltay>[\nodea`\nodeb;\labela]%
\morphism(\xpos,\ypos)|\xb|/{#4}/<\deltax,-\deltay>[\nodea`\nodec;\labelb]%
\advance\ypos by -\deltay%
\advance\xpos by \deltax%
\morphism(\xpos,\ypos)|\xe|/{#7}/<-\deltax,-\deltay>[\nodec`\noded;\labele]%
\ignorespaces}%
\def\Atrianglepairp(#1){\ifnextchar|{\Atrianglepairpp(#1)}%
{\Atrianglepairpp(#1)|lmrbb|}}%
\def\Atrianglepairpp(#1)|#2|{\ifnextchar/{\Atrianglepairppp(#1)|#2|}%
    {\Atrianglepairppp(#1)|#2|/>`>`>`>`>/}}%
\def\Atrianglepairppp(#1)|#2|/#3`#4`#5`#6`#7/{%
    \ifnextchar<{\Atrianglepairpppp(#1)|#2|/#3`#4`#5`#6`#7/}%
    {\Atrianglepairpppp(#1)|#2|/#3`#4`#5`#6`#7/<\default,\default>}}%
\def\Vtrianglepairp(#1){\ifnextchar|{\Vtrianglepairpp(#1)}%
{\Vtrianglepairpp(#1)|aalmr|}}%
\def\Vtrianglepairpp(#1)|#2|{\ifnextchar/{\Vtrianglepairppp(#1)|#2|}%
    {\Vtrianglepairppp(#1)|#2|/>`>`>`>`>/}}%
\def\Vtrianglepairppp(#1)|#2|/#3`#4`#5`#6`#7/{%
    \ifnextchar<{\Vtrianglepairpppp(#1)|#2|/#3`#4`#5`#6`#7/}%
    {\Vtrianglepairpppp(#1)|#2|/#3`#4`#5`#6`#7/<\default,\default>}}%
\def\Ctrianglepairp(#1){\ifnextchar|{\Ctrianglepairpp(#1)}%
{\Ctrianglepairpp(#1)|lrmlr|}}%
\def\Ctrianglepairpp(#1)|#2|{\ifnextchar/{\Ctrianglepairppp(#1)|#2|}%
    {\Ctrianglepairppp(#1)|#2|/>`>`>`>`>/}}%
\def\Ctrianglepairppp(#1)|#2|/#3`#4`#5`#6`#7/{%
    \ifnextchar<{\Ctrianglepairpppp(#1)|#2|/#3`#4`#5`#6`#7/}%
    {\Ctrianglepairpppp(#1)|#2|/#3`#4`#5`#6`#7/<\default,\default>}}%
\def\Dtrianglepairp(#1){\ifnextchar|{\Dtrianglepairpp(#1)}%
{\Dtrianglepairpp(#1)|lrmlr|}}%
\def\Dtrianglepairpp(#1)|#2|{\ifnextchar/{\Dtrianglepairppp(#1)|#2|}%
    {\Dtrianglepairppp(#1)|#2|/>`>`>`>`>/}}%
\def\Dtrianglepairppp(#1)|#2|/#3`#4`#5`#6`#7/{%
    \ifnextchar<{\Dtrianglepairpppp(#1)|#2|/#3`#4`#5`#6`#7/}%
    {\Dtrianglepairpppp(#1)|#2|/#3`#4`#5`#6`#7/<\default,\default>}}%
\def\place(#1,#2)[#3]{\POS(#1,#2)*+!!<0ex,.75ex>{#3}\ignorespaces}%
\def\pullback#1]#2]{\square#1]\trident#2]\ignorespaces}%
\def\tridentppp|#1#2#3|/#4`#5`#6/<#7,#8>[#9]{%
\def\next[##1;##2`##3`##4]{\def\nodee{##1}\def\labele{##2}%
   \def\labelf{##3}\def\labelg{##4}}%
\next[#9]%
\advance \xpos by -\deltax%
\advance \xpos by -#7\advance \ypos by #8%
\advance\deltax by #7%
\morphism(\xpos,\ypos)|#1|/{#4}/<\deltax,-#8>[\nodee`\nodeb;\labele]%
\advance\deltax by -#7%
\morphism(\xpos,\ypos)|#2|/{#5}/<#7,-#8>[\nodee`\nodea;\labelf]%
\advance\deltay by #8%
\morphism(\xpos,\ypos)|#3|/{#6}/<#7,-\deltay>[\nodee`\nodec;\labelg]%
\ignorespaces}%
\def\trident{\ifnextchar|{\tridentp}{\tridentp|amb|}}%
\def\tridentp|#1|{\ifnextchar/{\tridentpp|#1|}{\tridentpp|#1|/{>}`{>}`{>}/}}%
\def\tridentpp|#1|/#2/{\ifnextchar<{\tridentppp|#1|/#2/}%
  {\tridentppp|#1|/#2/<500,500>}}%
\def\setmorphismwidth#1#2#3#4{%
 \setbox0=\hbox{$#1{\labelstyle#3#3}#2$}#4=\wd0%
 \divide #4 by 2 \divide #4 by \ul%
 \advance #4 by 350 \ratchet{#4}{500}}%
\def\setSquarewidth[#1`#2`#3`#4;#5`#6`#7`#8]{%
 \setmorphismwidth{#1}{#2}{#5}{\topw}%
 \setmorphismwidth{#3}{#4}{#8}{\botw}%
\ratchet{\topw}{\botw}}%
\def\Squarepppp(#1)|#2|/#3/<#4>[#5]{%
 \setSquarewidth[#5]%
 \squarepppp(#1)|#2|/#3/<\topw,#4>[#5]%
\ignorespaces}%
\def\Squarep(#1){\ifnextchar|{\Squarepp(#1)}{\Squarepp(#1)|alrb|}}%
\def\Squarepp(#1)|#2|{\ifnextchar/{\Squareppp(#1)|#2|}%
    {\Squareppp(#1)|#2|/>`>`>`>/}}%
\def\Squareppp(#1)|#2|/#3`#4`#5`#6/{%
    \ifnextchar<{\Squarepppp(#1)|#2|/#3`#4`#5`#6/}%
    {\Squarepppp(#1)|#2|/#3`#4`#5`#6/<\default>}}%
\def\hSquarespppp(#1,#2)|#3|/#4/<#5>[#6;#7]{%
\Xpos=#1\Ypos=#2%
\def\next|##1##2##3##4##5##6##7|{%
 \def\Xa{##1}\def\Xb{##2}\def\Xc{##3}\def\Xd{##4}%
 \def\Xe{##5}\def\Xf{##6}\def\Xg{##7}}%
\next|#3|%
\deltaY=#5%
\def\next[##1`##2`##3`##4`##5`##6]{%
 \def\Nodea{##1}\def\Nodeb{##2}\def\Nodec{##3}%
 \def\Noded{##4}\def\Nodee{##5}\def\Nodef{##6}}%
\next[#6]%
\def\next[##1`##2`##3`##4`##5`##6`##7]{%
 \def\Labela{##1}\def\Labelb{##2}\def\Labelc{##3}\def\Labeld{##4}%
 \def\Labele{##5}\def\Labelf{##6}\def\Labelg{##7}}%
\next[#7]%
\dohSquares/#4/}%
\def\dohSquares/#1`#2`#3`#4`#5`#6`#7/{%
\Squarepppp(\Xpos,\Ypos)|\Xa\Xc\Xd\Xf|/#1`#3`#4`#6/<\deltaY>%
 [\Nodea`\Nodeb`\Noded`\Nodee;\Labela`\Labelc`\Labeld`\Labelf]%
 \advance \Xpos by \topw
\Squarepppp(\Xpos,\Ypos)|\Xb\Xd\Xe\Xg|/#2``#5`#7/<\deltaY>%
[\Nodeb`\Nodec`\Nodee`\Nodef;\Labelb``\Labele`\Labelg]%
\ignorespaces}%
\def\hSquaresp(#1){\ifnextchar|{\hSquarespp(#1)}{\hSquarespp%
(#1)|aalmrbb|}}%
\def\hSquarespp(#1)|#2|{\ifnextchar/{\hSquaresppp(#1)|#2|}%
    {\hSquaresppp(#1)|#2|/>`>`>`>`>`>`>/}}%
\def\hSquaresppp(#1)|#2|/#3/{%
    \ifnextchar<{\hSquarespppp(#1)|#2|/#3/}%
    {\hSquarespppp(#1)|#2|/#3/<\default>}}%
\def\vSquarespppp(#1,#2)|#3|/#4/<#5,#6>[#7;#8]{%
\Xpos=#1\Ypos=#2%
\def\next|##1##2##3##4##5##6##7|{%
 \def\Xa{##1}\def\Xb{##2}\def\Xc{##3}\def\Xd{##4}%
 \def\Xe{##5}\def\Xf{##6}\def\Xg{##7}}%
\next|#3|%
\deltaX=#5%
\deltaY=#6%
\def\next[##1`##2`##3`##4`##5`##6]{%
 \def\Nodea{##1}\def\Nodeb{##2}\def\Nodec{##3}%
 \def\Noded{##4}\def\Nodee{##5}\def\Nodef{##6}}%
\next[#7]%
\def\next[##1`##2`##3`##4`##5`##6`##7]{%
 \def\Labela{##1}\def\Labelb{##2}\def\Labelc{##3}\def\Labeld{##4}%
 \def\Labele{##5}\def\Labelf{##6}\def\Labelg{##7}}%
\next[#8]%
\dovSquares/#4/\ignorespaces}%
\def\dovSquares/#1`#2`#3`#4`#5`#6`#7/{%
\setmorphismwidth{\Nodea}{\Nodeb}{\Labela}{\topw}%
\setmorphismwidth{\Nodec}{\Noded}{\Labeld}{\botw}%
\ratchet{\topw}{\botw}%
\setmorphismwidth{\Nodee}{\Nodef}{\Labelg}{\botw}%
\ratchet{\topw}{\botw}%
\square(\Xpos,\Ypos)|\Xd\Xe\Xf\Xg|/`#5`#6`#7/<\topw,\deltaX>%
 [\Nodec`\Noded`\Nodee`\Nodef;`\Labele`\Labelf`\Labelg]%
\advance \Ypos by \deltaX%
\square(\Xpos,\Ypos)|\Xa\Xb\Xc\Xd|/#1`#2`#3`#4/<\topw,\deltaY>%
 [\Nodea`\Nodeb`\Nodec`\Noded;\Labela`\Labelb`\Labelc`\Labeld]%
}%
\def\vSquaresp(#1){\ifnextchar|{\vSquarespp(#1)}{\vSquarespp%
(#1)|alrmlrb|}}%
\def\vSquarespp(#1)|#2|{\ifnextchar/{\vSquaresppp(#1)|#2|}%
    {\vSquaresppp(#1)|#2|/>`>`>`>`>`>`>/}}%
\def\vSquaresppp(#1)|#2|/#3/{%
    \ifnextchar<{\vSquarespppp(#1)|#2|/#3/}%
    {\vSquarespppp(#1)|#2|/#3/<\default,\default>}}%
\def\osquarepppp(#1)|#2|/#3`#4`#5`#6/<#7>[#8]{\squarepppp%
 (#1)|#2|/#3`#4`#5`#6/<#7>[#8]%
 \let\Nodea\nodea\let\Nodeb\nodeb%
\let\Nodec\nodec\let\Noded\noded\Xpos=\xpos\Ypos=\ypos%
\deltaX=\deltax \deltaY=\deltay \isquare}
\def\osquarep(#1){\ifnextchar|{\osquarepp(#1)}{\osquarepp(#1)|alrb|}}%
\def\osquarepp(#1)|#2|{\ifnextchar/{\osquareppp(#1)|#2|}%
    {\osquareppp(#1)|#2|/>`>`>`>/}}%
\def\osquareppp(#1)|#2|/#3`#4`#5`#6/{%
    \ifnextchar<{\osquarepppp(#1)|#2|/#3`#4`#5`#6/}%
    {\osquarepppp(#1)|#2|/#3`#4`#5`#6/<1500,1500>}}%
\def\isquarepppp(#1)|#2|/#3`#4`#5`#6/<#7>[#8]{%
 \squarepppp(#1)|#2|/#3`#4`#5`#6/<#7>[#8]%
\ifnextchar|{\cubep}{\cubep|mmmm|}}%
\def\cubep|#1|{\ifnextchar/{\cubepp|#1|}{\cubepp|#1|/>`>`>`>/}}%
\def\isquare{\ifnextchar({\isquarep}{\isquarep(\default,\default)}}%
\def\isquarep(#1){\ifnextchar|{\isquarepp(#1)}{\isquarepp(#1)|alrb|}}
\def\isquarepp(#1)|#2|{\ifnextchar/{\isquareppp(#1)|#2|}%
    {\isquareppp(#1)|#2|/>`>`>`>/}}%
\def\isquareppp(#1)|#2|/#3`#4`#5`#6/{%
    \ifnextchar<{\isquarepppp(#1)|#2|/#3`#4`#5`#6/}%
    {\isquarepppp(#1)|#2|/#3`#4`#5`#6/<500,500>}}%
\def\cubepp|#1#2#3#4|/#5`#6`#7`#8/[#9]{%
\def\next[##1`##2`##3`##4]{\gdef\Labela{##1}%
\gdef\Labelb{##2}\gdef\Labelc{##3}\gdef\Labeld{##4}}\next[#9]%
\xend\xpos \yend\ypos
\Xend\xend\advance\Xend by -\Xpos
\Yend\yend\advance\Yend by -\Ypos
\domorphism(\Xpos,\Ypos)|#2|/#6/<\Xend,\Yend>[\Nodeb`\nodeb;\Labelb]%
\advance\Xpos by-\deltaX
\advance\xend by-\deltax
\Xend\xend\advance\Xend by -\Xpos
\domorphism(\Xpos,\Ypos)|#1|/#5/<\Xend,\Yend>[\Nodea`\nodea;\Labela]%
\advance\Ypos by-\deltaY
\advance\yend by-\deltay
\Yend\yend\advance\Yend by -\Ypos
\domorphism(\Xpos,\Ypos)|#3|/#7/<\Xend,\Yend>[\Nodec`\nodec;\Labelc]%
\advance\Xpos by\deltaX
\advance\xend by\deltax
\Xend\xend\advance\Xend by -\Xpos
\domorphism(\Xpos,\Ypos)|#4|/#8/<\Xend,\Yend>[\Noded`\noded;\Labeld]%
\ignorespaces}
\def\setwdth#1#2{\setbox0\hbox{$\labelstyle#1$}\wdth=\wd0
\setbox0\hbox{$\labelstyle#2$}\ifnum\wdth<\wd0 \wdth=\wd0 \fi}
\def\topppp/#1/<#2>^#3_#4{\allowbreak\mathrel{%
\ifnum#2=0
   \setwdth{#3}{#4}\deltax=\wdth \divide \deltax by \ul
   \advance \deltax by \defaultmargin  \ratchet{\deltax}{200}%
\else \deltax #2
\fi
\xy\ar@{#1}^{#3}_{#4}(\deltax,0) \endxy
\ignorespaces}}
\def\toppp/#1/<#2>^#3{\ifnextchar_{\topppp/#1/<#2>^{#3}}{\topppp/#1/<#2>^{#3}_{}}}
\def\topp/#1/<#2>{\ifnextchar^{\toppp/#1/<#2>}{\toppp/#1/<#2>^{}}}
\def\top/#1/{\ifnextchar<{\topp/#1/}{\topp/#1/<0>}}
\def\twopppp/#1`#2/<#3>^#4_#5{\allowbreak\mathrel{%
\ifnum0=#3
  \setwdth{#4}{#5}\deltax=\wdth \divide \deltax by \ul \advance \deltax
  by \defaultmargin \ratchet{\deltax}{200}%
\else \deltax#3 \fi
\xy\ar@{#1}@<2.5pt>^{#4}(\deltax,0)%
\ar@{#2}@<-2.5pt>_{#5}(\deltax,0)\endxy\ignorespaces}}
\def\twoppp/#1`#2/<#3>^#4{\ifnextchar_{\twopppp/#1`#2/<#3>^{#4}}%
  {\twopppp/#1`#2/<#3>^{#4}_{}}}
\def\twopp/#1`#2/<#3>{\ifnextchar^{\twoppp/#1`#2/<#3>}{\twoppp/#1`#2/<#3>^{}}}
\def\twop/#1`#2/{\ifnextchar<{\twopp/#1`#2/}{\twopp/#1`#2/<0>}}
\def\threeppppp/#1`#2`#3/<#4>^#5|#6_#7{\allowbreak\mathrel{%
\ifnum0=#4
\setbox0\hbox{$\labelstyle#5$}\wdth=\wd0
\setbox0\hbox{$\labelstyle#6$}\ifnum\wdth<\wd0 \wdth=\wd0 \fi
\setbox0\hbox{$\labelstyle#7$}\ifnum\wdth<\wd0 \wdth=\wd0 \fi
\deltax=\wdth \divide \deltax by \ul \advance \deltax by
\defaultmargin \ratchet{\deltax}{300}%
\else\deltax#4 \fi
    \xy \ifnum\wd0=0 \ar@{#2}(\deltax,0)
    \else \ar@{#2}|{#6}(\deltax,0)\fi
\ar@{#1}@<4.5pt>^{#5}(\deltax,0)
\ar@{#3}@<-4.5pt>_{#7}(\deltax,0)\endxy\ignorespaces}}
\def\threepppp/#1`#2`#3/<#4>^#5|#6{\ifnextchar_{\threeppppp
  /#1`#2`#3/<#4>^{#5}|{#6}}{\threeppppp/#1`#2`#3/<#4>^{#5}|{#6}_{}}}
\def\threeppp/#1`#2`#3/<#4>^#5{\ifnextchar|{\threepppp
  /#1`#2`#3/<#4>^{#5}}{\threepppp/#1`#2`#3/<#4>^{#5}|{}}}
\def\threepp/#1`#2`#3/<#4>{\ifnextchar^{\threeppp/#1`#2`#3/<#4>}%
  {\threeppp/#1`#2`#3/<#4>^{}}}
\def\threep/#1`#2`#3/{\ifnextchar<{\threepp/#1`#2`#3/}%
  {\threepp/#1`#2`#3/<0>}}
\def\twoar(#1,#2){{%
 \scalefactor{0.1}
 \deltax#1\deltay#2%
 \deltaX=\ifnum\deltax<0-\fi\deltax
 \deltaY=\ifnum\deltay<0-\fi\deltay
 \Xend\deltax \multiply \Xend by \deltax
 \Yend\deltay \multiply \Yend by \deltay
 \advance\Xend by \Yend \multiply \Xend by 3
 \ifnum \deltaX > \deltaY
    \multiply \deltaX by 3 \advance \deltaX by \deltaY
 \else
    \multiply \deltaY by 3 \advance \deltaX by \deltaY
 \fi
 \multiply\deltax by 500
 \multiply\deltay by 500
 \xpos\deltax \multiply \xpos by 3 \divide\xpos by \deltaX
 \Xpos\deltax \multiply \Xpos by \deltaX \divide \Xpos by \Xend
 \advance \xpos by \Xpos
 \ypos\deltay \multiply \ypos by 3 \divide\ypos by \deltaX
 \Ypos\deltay \multiply \Ypos by \deltaX \divide \Ypos by \Xend
 \advance \ypos by \Ypos
 \xy \ar@{=>}(\xpos,\ypos) \endxy
}\ignorespaces}
\def\iiixiiipppppp(#1,#2)|#3|/#4/<#5>#6<#7>[#8;#9]{%
 \xpos#1\ypos#2\relax
 \def\next|##1##2##3##4##5##6##7|{\def\xa{##1}\def\xb{##2}%
 \def\xc{##3}\def\xd{##4}\def\xe{##5}\def\xf{##6}\nextt|##7|}%
 \def\nextt|##1##2##3##4##5##6|{\def\xg{##1}\def\xh{##2}%
 \def\xi{##3}\def\xj{##4}\def\xk{##5}\def\xl{##6}}%
 \next|#3|%
 \def\next<##1,##2>{\deltax##1\deltay##2}%
 \next<#5>%
 \def\next<##1,##2>{\deltaX##1\deltaY##2}%
 \next<#7>%
 \def\next##1{\topw##1\relax
 \ifodd\topw \def\zl{}\else\def\zl{\relax}\fi \divide\topw by 2
 \ifodd\topw \def\zk{}\else\def\zk{\relax}\fi \divide\topw by 2
 \ifodd\topw \def\zj{}\else\def\zj{\relax}\fi \divide\topw by 2
 \ifodd\topw \def\zi{}\else\def\zi{\relax}\fi \divide\topw by 2
 \ifodd\topw \def\zh{}\else\def\zh{\relax}\fi \divide\topw by 2
 \ifodd\topw \def\zg{}\else\def\zg{\relax}\fi \divide\topw by 2
 \ifodd\topw \def\zf{}\else\def\zf{\relax}\fi \divide\topw by 2
 \ifodd\topw \def\ze{}\else\def\ze{\relax}\fi \divide\topw by 2
 \ifodd\topw \def\zd{}\else\def\zd{\relax}\fi \divide\topw by 2
 \ifodd\topw \def\zc{}\else\def\zc{\relax}\fi \divide\topw by 2
 \ifodd\topw \def\zb{}\else\def\zb{\relax}\fi \divide\topw by 2
 \ifodd\topw \def\za{}\else\def\za{\relax}\fi}%
 \next{#6}%
 \def\next[##1`##2`##3`##4`##5`##6`##7`##8`##9]{%
 \def\nodea{##1}\def\nodeb{##2}\def\nodec{##3}%
 \def\noded{##4}\def\nodee{##5}\def\nodef{##6}%
 \def\nodeg{##7}\def\nodeh{##8}\def\nodei{##9}}%
 \next[#8]%
 \def\next[##1`##2`##3`##4`##5`##6`##7]{%
 \def\labela{##1}\def\labelb{##2}\def\labelc{##3}%
 \def\labeld{##4}\def\labele{##5}\def\labelf{##6}\nextt[##7]}%
 \def\nextt[##1`##2`##3`##4`##5`##6]{%
 \def\labelg{##1}\def\labelh{##2}\def\labeli{##3}%
 \def\labelj{##4}\def\labelk{##5}\def\labell{##6}}%
 \next[#9]%
 \def\next/##1`##2`##3`##4`##5`##6`##7/{%
\morphism(\xpos,\ypos)|\xe|/{##5}/<\deltax,0>[\nodeg`\nodeh;\labele]%
 \ifx\zi\empty\relax \morphism(\xpos,\ypos)||/<-/<-\deltaX,0>[\nodeg`0;]\fi
 \ifx\zd\empty\relax \morphism(\xpos,\ypos)||<0,-\deltaY>[\nodeg`0;]\fi
 \advance\xpos by \deltax
 \morphism(\xpos,\ypos)|\xf|/{##6}/<\deltax,0>[\nodeh`\nodei;\labelf]%
 \ifx\ze\empty\relax \morphism(\xpos,\ypos)||<0,-\deltaY>[\nodeh`0;]\fi
 \advance\xpos by \deltax
 \ifx\zf\empty\relax \morphism(\xpos,\ypos)||<0,-\deltaY>[\nodei`0;]\fi
 \ifx\zl\empty\relax \morphism(\xpos,\ypos)||<\deltaX,0>[\nodei`0;]\fi
 \advance\ypos by \deltay
 \ifx\zk\empty\relax \morphism(\xpos,\ypos)||<\deltaX,0>[\nodef`0;]\fi
 \advance\xpos by -\deltax
 \morphism(\xpos,\ypos)|\xd|/{##4}/<\deltax,0>[\nodee`\nodef;\labeld]%
 \advance\xpos by -\deltax
 \morphism(\xpos,\ypos)|\xc|/{##3}/<\deltax,0>[\noded`\nodee;\labelc]%
 \ifx\zh\empty\relax \morphism(\xpos,\ypos)||/<-/<-\deltaX,0>[\noded`0;]\fi
 \advance\ypos by \deltay
 \morphism(\xpos,\ypos)|\xa|/{##1}/<\deltax,0>[\nodea`\nodeb;\labela]%
 \ifx\zg\empty\relax \morphism(\xpos,\ypos)||/<-/<-\deltaX,0>[\nodea`0;]\fi
 \ifx\za\empty\relax \morphism(\xpos,\ypos)||/<-/<0,\deltaY>[\nodea`0;]\fi
 \advance\xpos by \deltax
 \morphism(\xpos,\ypos)|\xb|/{##2}/<\deltax,0>[\nodeb`\nodec;\labelb]%
 \ifx\zb\empty\relax \morphism(\xpos,\ypos)||/<-/<0,\deltaY>[\nodeb`0;]\fi
 \advance\xpos by \deltax
 \ifx\zc\empty\relax \morphism(\xpos,\ypos)||/<-/<0,\deltaY>[\nodec`0;]\fi
 \ifx\zj\empty\relax \morphism(\xpos,\ypos)||<\deltaX,0>[\nodec`0;]\fi
 \nextt/##7/}%
 \def\nextt/##1`##2`##3`##4`##5`##6/{%
 \morphism(\xpos,\ypos)|\xi|/{##3}/<0,-\deltay>[\nodec`\nodef;\labeli]%
 \advance\xpos by -\deltax
 \morphism(\xpos,\ypos)|\xh|/{##2}/<0,-\deltay>[\nodeb`\nodee;\labelh]%
 \advance\xpos by -\deltax
 \morphism(\xpos,\ypos)|\xg|/{##1}/<0,-\deltay>[\nodea`\noded;\labelg]%
 \advance\ypos by -\deltay
 \morphism(\xpos,\ypos)|\xj|/{##4}/<0,-\deltay>[\noded`\nodeg;\labelj]%
 \advance\xpos by \deltax
 \morphism(\xpos,\ypos)|\xk|/{##5}/<0,-\deltay>[\nodee`\nodeh;\labelk]%
 \advance\xpos by \deltax
 \morphism(\xpos,\ypos)|\xl|/{##6}/<0,-\deltay>[\nodef`\nodei;\labell]}%
 \next/#4/\ignorespaces}
\def\iiixiiip(#1){\ifnextchar|{\iiixiiipp(#1)}%
  {\iiixiiipp(#1)|aammbblmrlmr|}}%
\def\iiixiiipp(#1)|#2|{\ifnextchar/{\iiixiiippp(#1)|#2|}%
    {\iiixiiippp(#1)|#2|/>`>`>`>`>`>`>`>`>`>`>`>/}}%
\def\iiixiiippp(#1)|#2|/#3/{%
    \ifnextchar<{\iiixiiipppp(#1)|#2|/#3/}%
    {\iiixiiipppp(#1)|#2|/#3/<\default,\default>}}%
\def\iiixiiipppp(#1)|#2|/#3/<#4>{\ifnextchar[{\iiixiiippppp(#1)|#2|/#3/%
   <#4>0<0,0>}{\iiixiiippppp(#1)|#2|/#3/<#4>}}%
\def\iiixiiippppp(#1)|#2|/#3/<#4>#5{\ifnextchar<%
   {\iiixiiipppppp(#1)|#2|/#3/<#4>{#5}}%
   {\iiixiiipppppp(#1)|#2|/#3/<#4>{#5}<400,400>}}%
\def\iiixiipppppp(#1,#2)|#3|/#4/<#5>#6<#7>[#8;#9]{%
 \xpos#1\ypos#2\relax
 \def\next|##1##2##3##4##5##6##7|{\def\xa{##1}\def\xb{##2}%
 \def\xc{##3}\def\xd{##4}\def\xe{##5}\def\xf{##6}\def\xg{##7}}%
 \next|#3|%
 \def\next<##1,##2>{\deltax##1\deltay##2}%
 \next<#5>%
 \deltaX#7
 \topw#6
 \def\next{%
 \ifodd\topw \def\zd{}\else\def\zd{\relax}\fi \divide\topw by 2
 \ifodd\topw \def\zc{}\else\def\zc{\relax}\fi \divide\topw by 2
 \ifodd\topw \def\zb{}\else\def\zb{\relax}\fi \divide\topw by 2
 \ifodd\topw \def\za{}\else\def\za{\relax}\fi}%
 \next
 \def\next[##1`##2`##3`##4`##5`##6]{%
 \def\nodea{##1}\def\nodeb{##2}\def\nodec{##3}%
 \def\noded{##4}\def\nodee{##5}\def\nodef{##6}}%
 \next[#8]%
 \def\next[##1`##2`##3`##4`##5`##6`##7]{%
 \def\labela{##1}\def\labelb{##2}\def\labelc{##3}%
 \def\labeld{##4}\def\labele{##5}\def\labelf{##6}\def\labelg{##7}}%
 \next[#9]%
 \def\next/##1`##2`##3`##4`##5`##6`##7/{%
 \ifx\zc\empty\relax\morphism(\xpos,\ypos)<\deltaX,0>[0`\noded;]\fi
 \advance\xpos by\deltaX
 \morphism(\xpos,\ypos)|\xc|/##3/<\deltax,0>[\noded`\nodee;\labelc]%
 \advance\xpos by \deltax
 \morphism(\xpos,\ypos)|\xd|/##4/<\deltax,0>[\nodee`\nodef;\labeld]%
 \advance\xpos by \deltax
 \ifx\zd\empty\relax  \morphism(\xpos,\ypos)<\deltaX,0>[\nodef`0;]\fi
 \advance\xpos by -\deltaX  \advance\xpos by -\deltax
 \advance\xpos by -\deltax  \advance\ypos by \deltay
 \ifx\za\empty\relax\morphism(\xpos,\ypos)<\deltaX,0>[0`\nodea;]\fi
 \advance\xpos by\deltaX
 \morphism(\xpos,\ypos)|\xa|/##1/<\deltax,0>[\nodea`\nodeb;\labela]%
 \morphism(\xpos,\ypos)|\xe|/##5/<0,-\deltay>[\nodea`\noded;\labele]%
 \advance\xpos by \deltax
 \morphism(\xpos,\ypos)|\xb|/##2/<\deltax,0>[\nodeb`\nodec;\labelb]%
 \morphism(\xpos,\ypos)|\xf|/##6/<0,-\deltay>[\nodeb`\nodee;\labelf]%
 \advance\xpos by \deltax
 \morphism(\xpos,\ypos)|\xg|/##7/<0,-\deltay>[\nodec`\nodef;\labelg]%
 \ifx\zb\empty\relax \morphism(\xpos,\ypos)<\deltaX,0>[\nodec`0;]\fi}%
 \next/#4/\ignorespaces}
\def\iiixiip(#1){\ifnextchar|{\iiixiipp(#1)}%
  {\iiixiipp(#1)|aabblmr|}}%
\def\iiixiipp(#1)|#2|{\ifnextchar/{\iiixiippp(#1)|#2|}%
    {\iiixiippp(#1)|#2|/>`>`>`>`>`>`>/}}%
\def\iiixiippp(#1)|#2|/#3/{%
    \ifnextchar<{\iiixiipppp(#1)|#2|/#3/}%
    {\iiixiipppp(#1)|#2|/#3/<\default,\default>}}%
\def\iiixiipppp(#1)|#2|/#3/<#4>{\ifnextchar[{\iiixiippppp(#1)|#2|/#3/%
   <#4>{0}<0>}{\iiixiippppp(#1)|#2|/#3/<#4>}}%
\def\iiixiippppp(#1)|#2|/#3/<#4>#5{\ifnextchar<%
   {\iiixiipppppp(#1)|#2|/#3/<#4>{#5}}%
   {\iiixiipppppp(#1)|#2|/#3/<#4>{#5}<0>}}%
\theoremstyle{plain}
\newtheorem{thm}{Theorem}[section]
\newtheorem{pro}[thm]{Proposition}
\newtheorem{lem}[thm]{Lemma}
\newtheorem{cor}[thm]{Corollary}
\newtheorem*{propL}{Property $\mcal L_A^G$}
\theoremstyle{definition}
\newtheorem{dfn}[thm]{Definition}
\newtheorem{dfnlm}[thm]{Definition-Lemma}
\newtheorem{rem}[thm]{Remark}
\theoremstyle{remark}
\DeclareMathOperator{\mult}{mult}
\DeclareMathOperator{\ord}{ord}
\DeclareMathOperator{\res}{res}
\DeclareMathOperator{\im}{Im}
\DeclareMathOperator{\Int}{int}
\DeclareMathOperator{\relint}{relint}
\DeclareMathOperator{\Supp}{Supp}
\DeclareMathOperator{\Bs}{Bs}
\DeclareMathOperator{\Div}{Div}
\DeclareMathOperator{\WDiv}{WDiv}
\DeclareMathOperator{\ddiv}{div}
\DeclareMathOperator{\Mob}{Mob}
\DeclareMathOperator{\bMob}{\mathbf{Mob}}
\DeclareMathOperator{\Fix}{Fix}
\newcommand{\R}{\mathbb{R}}
\newcommand{\Q}{\mathbb{Q}}
\newcommand{\N}{\mathbb{N}}
\newcommand{\Z}{\mathbb{Z}}
\newcommand{\C}{\mathbb{C}}
\newcommand{\B}{\mathbf{B}}
\newcommand{\m}{\mathbf{m}}
\newcommand{\bigcone}{\mathrm{Big}}
\newcommand{\OO}{\mathcal{O}}
\newcommand{\mcal}{\mathcal}
\title[Towards finite generation without the MMP]{Towards finite generation of the canonical ring without the MMP}
\date{16 December 2008}
\author{Vladimir Lazi\'c}
\address{Department of Pure Mathematics and Mathematical Statistics, Uni\-ver\-si\-ty of Cambridge, Wilberforce Road, Cambridge CB3 0WB, UK}
\email{V.Lazic@dpmms.cam.ac.uk}
\begin{document}

\begin{abstract}
This paper is the first of two steps in a project to prove finite generation of the log canonical ring without Mori theory.
\end{abstract}

\maketitle
\bibliographystyle{amsalpha}

\tableofcontents

\section{Introduction}

In this paper I establish the first of two steps in a project to prove finite generation of the log canonical ring without the
Minimal Model Program. I prove:

\begin{thm}\label{cor:can}
Let $(X,\Delta)$ be a projective klt pair and assume Property $\mcal L_A^G$ in dimensions $\leq\dim X$. Then the log canonical ring
$R(X,K_X+\Delta)$ is finitely generated.
\end{thm}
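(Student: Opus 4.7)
The plan is to reduce Theorem~\ref{cor:can} to a direct invocation of Property $\mcal L_A^G$ in dimension $n=\dim X$, after first placing $(X,\Delta)$ in a log smooth form whose boundary carries a genuine ample component. As a first step, I would pass to a log resolution $f\colon Y\to X$ of $(X,\Delta)$ and write
\[
K_Y+\Delta_Y=f^*(K_X+\Delta)+E
\]
with $\Delta_Y,E\ge 0$ effective without common components, $E$ $f$-exceptional, $(Y,\Delta_Y)$ log smooth, and $\lfloor\Delta_Y\rfloor=0$ by the klt hypothesis. Since $E$ is effective and $f$-exceptional, $f_*\OO_Y(mE)=\OO_X$ for every $m\ge 0$, and the projection formula then delivers a canonical graded isomorphism $R(Y,K_Y+\Delta_Y)\cong R(X,K_X+\Delta)$; I may therefore assume from the outset that $(X,\Delta)$ is itself log smooth klt.

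Next, if $K_X+\Delta$ is not pseudoeffective then $R(X,K_X+\Delta)=\C$ and there is nothing to prove, so assume pseudoeffectivity. When $\Delta$ is big, Kodaira's lemma applied inside $\Delta$ together with a convex combination $\Delta\sim_\Q(1-\epsilon)\Delta+\epsilon(A'+B')$ yields, for $0<\epsilon\ll 1$, a $\Q$-linear equivalence $\Delta\sim_\Q A+B$ with $A$ an ample $\Q$-divisor, $B\ge 0$, and $(X,A+B)$ still log smooth klt after possibly one further log resolution that leaves the section ring unchanged. When $K_X+\Delta$ is only pseudoeffective and $\Delta$ is not big, I would argue more indirectly---either by an Iitaka-fibration reduction feeding on Property $\mcal L_A^G$ in dimension $<n$, or by first perturbing by a small ample to reach the big case and then recovering $R(X,K_X+\Delta)$ as a controlled graded subring of the enlarged algebra.

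With a decomposition $\Delta\sim_\Q A+B$ in hand, I would apply Property $\mcal L_A^G$ in dimension $n$ to the configuration $(X,A,B)$, deducing finite generation of the adjoint algebra attached to $K_X+A+B$; since $R(X,K_X+\Delta)$ is a truncation (or a graded summand) of that algebra, finite generation of the canonical ring follows at once. The main obstacle, as I see it, lies in the second step: producing the decomposition $\Delta\sim_\Q A+B$ with $(X,A+B)$ still klt log smooth in the purely pseudoeffective, non-big case is genuinely delicate and will likely consume most of the argument, whereas the remaining manipulations amount to standard birational bookkeeping and a formal unpacking of the hypothesis $\mcal L_A^G$.
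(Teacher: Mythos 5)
Your proposal has two genuine gaps, and the second one is fatal. First, the reduction to a boundary containing an ample part is set up incorrectly. The paper does not decompose $\Delta$; it first reduces to the case where $K_X+\Delta$ is \emph{big} (via \cite[Theorem 5.2]{FM00} and induction on dimension through the Iitaka fibration), then applies Kodaira's lemma to $K_X+\Delta$ itself, writing $K_X+\Delta\sim_\Q B+C$ with $B\geq0$ and $C$ ample, and finally sets $\Delta'=\Delta+\varepsilon B+\varepsilon C$, so that $K_X+\Delta'\sim_\Q(1+\varepsilon)(K_X+\Delta)$ and the two rings share a truncation. Your route requires $\Delta$ to be big, which is not part of the hypothesis (take $\Delta=0$ and $X$ of general type), and your fallback for the non-big case is not an argument: perturbing $K_X+\Delta$ by a small ample changes the section ring, and $R(X,K_X+\Delta)$ is not recovered from the perturbed algebra as a ``controlled graded subring'' in any way that transmits finite generation. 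The Fujino--Mori reduction is the correct and essentially forced first move, and it is where the induction on $\dim X$ enters.

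Second, and more seriously, you treat Property $\mcal L_A^G$ as if it directly delivered finite generation of the adjoint algebra of $K_X+A+B$. It does not. Property $\mcal L_A^G$ is purely a statement of convex geometry: for a fixed component $G$, the set of log canonical boundaries $\Phi$ with $G\not\subset\B(K_X+\Phi+A)$ is a rational polytope. The actual finite generation statement is Theorem \ref{thm:main} (finite generation of the Cox ring $R(X;D_1,\dots,D_\ell)$ for adjoint divisors $D_i=k_i(K_X+\Delta_i+A)$), which \emph{assumes} Property $\mcal L_A^G$ but whose proof occupies Sections \ref{plt} and \ref{proofmain} of the paper --- the restriction of plt algebras, the rational piecewise linearity of $\lambda^\sharp$, and the lifting of generators. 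By invoking $\mcal L_A^G$ where you should be invoking Theorem \ref{thm:main}, you have skipped the entire mathematical content that the deduction rests on. The correct final step is: after the reductions above, $K_X+\Delta'$ is $\Q$-linearly a positive multiple of $K_X+\Delta$ with $\Delta'$ containing an ample summand and $(X,\Delta')$ log smooth klt, so Theorem \ref{thm:main} (with $\ell=1$) applies and gives finite generation of a truncation of $R(X,K_X+\Delta)$, hence of the ring itself by Lemma \ref{lem:1}(1).
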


Property $\mcal L_A^G$ is stated below.
Let me sketch the strategy for the proof of finite generation and present difficulties that arise on the way. The natural idea
is to pick a smooth divisor $S$ on $X$ and to restrict the algebra to it. If we are very lucky, the restricted algebra will be
finitely generated and we might hope that the generators lift to generators on $X$. There are several issues with this approach.

Firstly, in order to obtain something meaningful on $S$, $S$ should be a log canonical centre of some pair $(X,\Delta')$ such that
$R(X,K_X+\Delta)$ and $R(X,K_X+\Delta')$ share a common truncation. This issue did not exist in the case of pl flips.

Secondly, even if the restricted algebra were finitely generated, the same might not be obvious for the kernel of the restriction map.
Note that the ``kernel issue'' also did not exist in the case of pl flips, since the relative Picard number $=1$ ensured that the kernel
was a principal ideal, at least after shrinking the base and passing to a truncation. So far this seems to have been the greatest conceptual issue
in attempts to prove the finite generation by the plan just outlined.

Thirdly, the natural choice is to use the Hacon-M\textsuperscript{c}Kernan extension theorem, see Theorem \ref{thm:hmck} below,
and hence we must be able to ensure that $S$ does not belong to the stable base locus of $K_X+\Delta'$.

The idea to resolve the kernel issue is to view $R(X,K_X+\Delta)$ as a subalgebra of a larger algebra, which would a priori contain generators
of the kernel. In practice this means that the new algebra will have higher rank grading. Namely, we will see that the rank corresponds to
the number of components of $\Delta$ and of an effective divisor $D\sim_\Q K_X+\Delta$.

Let me illustrate this on a basic example which will model the general lines of the proof in Section \ref{proofmain}. Say we wanted
to prove by induction that the ring $R(X,H)$ was finitely generated, where $H$ is an ample divisor. By passing to a truncation and
by taking a general member of $|\kappa H|$ for $\kappa\gg0$, we may assume that $H$ is smooth and very ample. By Serre's vanishing
the restriction map $\rho_k\colon H^0(X,\OO_X(kH))\rightarrow H^0(H,\OO_H(kH))$ is surjective for all $k$, and by induction
$R(H,\OO_H(H))$ is finitely generated. If $\sigma_H\in H^0(X,\OO_X(H))$ is a section such that $\ddiv\sigma_H=H$ and $\mcal H$ is a finite
set of generators of the finite dimensional vector space $\bigoplus_{i=1}^d H^0(X,\OO_X(iH))$, for some $d$, such that the set $\{s_{|H}:s\in\mcal H\}$
generates $R(H,\OO_H(H))$,
it is easy to see that $\mcal H\cup\{\sigma_H\}$ is a set of generators of $R(X,H)$, since $\ker(\rho_k)=H^0\big(X,\OO_X((k-1)H)\big)\cdot\sigma_H$.
A version of this idea applies to the case of pl flips and forms the basis of the construction of Shokurov and Hacon-M\textsuperscript{c}Kernan.

It is natural to try and restrict to a component of $\Delta$, the issue of course being that $(X,\Delta)$ does not have log canonical centres.
Therefore I allow restrictions to components of some effective divisor $D\sim_\Q K_X+\Delta$, and a tie-breaking-like technique allows
to create log canonical centres. Algebras encountered this way are, in effect, plt algebras, and their restriction is handled in
Section \ref{plt}. This is technically the most involved part of the proof.

Since the algebras we consider are of higher rank, not all divisors will have the same log canonical centres. I therefore restrict
to available centres, and lift generators from algebras that live on different divisors. Since the restrictions will also be
algebras of higher rank, the induction process must start from them. Thus, the main technical result of the paper is the following.

\begin{thm}\label{thm:main}
Let $X$ be a smooth projective variety, and for $i=1,\dots,\ell$ let $D_i=k_i(K_X+\Delta_i+A)$, where $A$ is an ample $\Q$-divisor and
$(X,\Delta_i+A)$ is a log smooth log canonical pair with $|D_i|\neq\emptyset$. Assume Property $\mcal L_A^G$
in dimensions $\leq\dim X$. Then the Cox ring $R(X;D_1,\dots,D_\ell)$ is finitely generated.
\end{thm}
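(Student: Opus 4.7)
The plan is to induct on $\dim X$, reducing the finite generation of $R(X; D_1, \dots, D_\ell)$ to instances of the theorem in dimension $\dim X - 1$ living on prime divisors $S_i \subset X$. In low dimension the result is standard, so assume the statement in all strictly smaller dimensions.

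First, I would arrange the setup by standard moves: pass to a log resolution that simultaneously resolves $(X, \sum \Delta_i + A)$ together with general sections of each $|D_i|$; replace $A$ by a general $\Q$-linearly equivalent divisor so that the pairs $(X, \Delta_i + A)$ remain log smooth; and pass to a common truncation of the multi-grading so that each $D_i$ carries an effective divisor $D_i^{\natural}$ whose support is log smooth together with $\Delta_i + A$. For each $i$, I would then run a tie-breaking argument: absorb a small fraction of $A$ into the components of $D_i^{\natural}$ and of $\Delta_i$ to produce a $\Q$-linear equivalence $K_X + \Delta_i + A \sim_\Q K_X + \Delta_i' + A'$ with $(X, \Delta_i' + A')$ plt, $A'$ still ample, and a unique non-klt centre equal to a prime divisor $S_i$ appearing in $D_i^{\natural}$. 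The ampleness of $A$ provides the room needed for this perturbation.

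Restricting the Cox ring to $S_i$ yields a plt algebra of the type treated in Section \ref{plt}, which is reduced there to an instance of Theorem \ref{thm:main} on $S_i$ one dimension down; the inductive hypothesis then produces a finite generating set $\mcal H_i^{S_i}$ of the restricted algebra. Property $\mcal L_A^G$, applied to $(X, \Delta_i' + A')$ with its non-klt centre $S_i$, lifts each element of $\mcal H_i^{S_i}$ to a section on $X$ of the prescribed multi-degree; let $\mcal H_i$ be a choice of such lifts, and $\sigma_{S_i} \in H^0(X, \OO_X(S_i))$ a defining section. Then $\bigcup_i (\mcal H_i \cup \{\sigma_{S_i}\})$, together with a finite-dimensional piece of $R(X; D_1, \dots, D_\ell)$ below some multi-degree threshold, should generate the whole Cox ring: given a section $s$ of multi-degree $\m$, its restriction to some $S_i$ is a polynomial in $\mcal H_i^{S_i}$, and subtracting off the corresponding lifted polynomial leaves a section vanishing along $S_i$, i.e.\ a multiple of $\sigma_{S_i}$ of strictly smaller multi-degree. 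A descending induction on multi-degree closes the argument.

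The hardest point is the kernel issue already emphasized in the introduction: in the pl-flip setting $\ker \rho$ is a principal ideal because the relative Picard number equals one, whereas here that shortcut is unavailable. The very reason for enlarging the object of study from a single section ring to a higher-rank Cox ring is that kernel elements at one multi-degree become genuine sections at shifted multi-degrees in the larger grading, and therefore fall under the same inductive hypothesis. Secondary difficulties that will demand care are (i) ensuring that the tie-breaking places each $S_i$ outside the stable base locus of $K_X + \Delta_i' + A'$ so that Property $\mcal L_A^G$ can be invoked, (ii) orchestrating the nested inductions on $\dim X$, on $\ell$, and on multi-degree so that no circularity arises when the plt restriction of Section \ref{plt} appeals back to this theorem, and (iii) keeping the lifts $\mcal H_i$ and defining sections $\sigma_{S_i}$ compatible across different indices $i$, since the $S_i$ need not coincide and a section generic to one centre may restrict trivially to another.
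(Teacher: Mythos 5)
Your high-level strategy is the paper's: induct on dimension, create log canonical centres on components of effective representatives of the $D_i$ by a tie-breaking perturbation absorbing part of $A$, identify the restricted algebras as the plt algebras of Section \ref{plt} (which feed back into the theorem one dimension down), and then lift generators by a descending induction on multi-degree. But there is a genuine gap at exactly the point you flag as the hardest: your descending induction does not close inside the grading monoid $\N^\ell$ of $R(X;D_1,\dots,D_\ell)$. After subtracting the lifted polynomial and dividing by $\sigma_{S_i}$, you obtain a section of $\OO_X(\sum m_iD_i-S_i)$, and $\sum m_iD_i-S_i$ is not of the form $\sum m_i'D_i$; it is not a graded piece of the algebra you are trying to generate, so ``strictly smaller multi-degree'' has no meaning. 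You gesture at the fix (``shifted multi-degrees in the larger grading'') but never construct the larger grading, and that construction is the substantive content of the paper's Steps 2--4: one embeds everything into an algebra $\mathfrak R$ graded by the rank-$N$ monoid $\Lambda=\bigoplus_j\N F_j$ on \emph{all} components $F_j$ of the $\Delta_i$ and of effective representatives of the $D_i$, adjoins the sections $\sigma_j$ of the $F_j$ as generators, cuts out a rational polyhedral cone $\mcal C\subset\Lambda_\R$ of adjoint classes, and decomposes $\mcal C=\bigcup_j\mcal C_j$ into subcones on which a fixed $F_j$ has boundary coefficient $1$ (so the centre to restrict to depends on the graded piece, not on the index $i$). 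The induction then closes only after the explicit verification that for $\deg\sigma$ above a threshold $M$ one has $\frac{r_\sigma-1}{r_\sigma}\geq\delta_{\tau w}$, so that $s-F_w$ lands back in $\mcal C\cap\Lambda$. Without this enlargement and this inequality the kernel issue is not resolved, only renamed.

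Two smaller corrections. First, Property $\mcal L_A^G$ does not lift sections from $S_i$ to $X$; it is a convexity statement about the polytope of boundaries $\Phi$ with $G\not\subset\B(K_X+\Phi+A)$. The lifting mechanism is the Hacon--M\textsuperscript{c}Kernan extension theorem (Theorem \ref{thm:hmck}), used inside Theorem \ref{thm:2} to identify $\res_S R$ with an adjoint algebra on $S$; once that identification is made, generators of the restricted algebra tautologically have preimages, so no separate lifting step via $\mcal L_A^G$ is needed or available. Second, before any of this the paper reduces to the case where all pairs $(X,\Delta_i+A)$ are klt (Step 1), so that the lc centres used for restriction are genuinely created by the rescaling in the cone $\mcal C$ rather than pre-existing in the $\Delta_i$; your proposal should make the analogous reduction explicit to keep the plt hypotheses of Theorem \ref{thm:2} available.
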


Property $\mcal L_A^G$ in the statement of Theorems \ref{cor:can} and \ref{thm:main} describes the convex geometry of the set of log canonical
pairs with big boundaries in terms of divisorial components of the stable base loci. More precisely:

\begin{propL}
Let $X$ be a smooth variety projective over an affine variety $Z$, $B$ a simple normal crossings divisor on $X$ and
$A$ a general ample $\Q$-divisor. Let $V\subset\Div(X)_\R$ be the vector space spanned by the components of $B$ and let
$\mcal L_V=\{\Theta\in V:(X,\Theta)\textrm{ is log canonical}\}$; this is a rational polytope in $V$.
Then for any component $G$ of $B$, the set
$$\mcal L_A^G=\{\Phi\in\mcal L_V:G\not\subset\B(K_X+\Phi+A)\}$$
is a rational polytope.
\end{propL}

Precise definitions are given in Section \ref{sec:2}. This property is a consequence of the MMP, see Proposition \ref{pro:2} below.
Here I would like to comment on a possible strategy to prove Property $\mcal L_A^G$ without using Mori theory. It seems likely
that the method of the proof will be quite similar to techniques used in Section 5 below to handle finite generation of
restricted algebras. The method involved is very recent, and has appeared in \cite{Hac08} in order to handle the proof of Non-vanishing
theorem; \cite{Pau08} gives a proof of that statement without the MMP.
The necessary technical tools were developed in \cite[Section 3]{Laz07}
in order to prove that certain superlinear functions are in fact piecewise linear, and they might be of use if one wishes to prove that
certain sets are polytopes.

Finally, it is my hope that the techniques of this paper could be adapted to handle finite generation in the case of log canonical singularities
and the abundance conjecture.\vspace{5mm}
\paragraph{\bf Acknowledgements}
I am indebted to my supervisor Alessio Corti for the philosophical point that sometimes higher rank is better than rank $1$ and that
starting from simple examples is essential. I would like to express my gratitude for his encouragement, support and continuous inspiration.
I am very grateful to Christopher Hacon for suggesting that methods from \cite{Hac08} might be useful in the context of finite generation of
the restricted algebra. Many thanks to S\'ebastien Boucksom, Paolo Cascini, Anne-Sophie Kaloghiros, Mihai P\u{a}un and Miles Reid for useful
comments and suggestions. I am supported by Trinity College, Cambridge.

\section{Notation and conventions}\label{sec:2}

Unless stated otherwise, varieties in this paper are normal over $\C$ and projective over an affine variety $Z$.
The group of Weil, respectively Cartier, divisors on a variety $X$ is denoted by $\WDiv(X)$, respectively $\Div(X)$.
We denote $\WDiv(X)^{\kappa\geq0}=\{D\in\WDiv(X):\kappa(X,D)\geq0\}$, and similarly for $\Div(X)^{\kappa\geq0}$, where $\kappa$
is the Iitaka dimension. Subscripts denote the rings in which the coefficients are taken.

We say an ample $\Q$-divisor $A$ on a variety $X$ is {\em (very) general\/} if there is a sufficiently divisible
positive integer $k$ such that $kA$ is very ample and $kA$ is a (very) general section of $|kA|$.
In particular we can assume that for some $k\gg0$, $kA$ is a smooth divisor on $X$.

For any two divisors $P=\sum p_iE_i$ and $Q=\sum q_iE_i$ on $X$ set
$$P\wedge Q=\sum\min\{p_i,q_i\}E_i.$$

I use basic properties of b-divisors, see \cite{Cor07}. The cone of mobile b-divisors on $X$ is denoted by $\bMob(X)$.

For the definition and basic properties of multiplier ideals used in this paper see \cite{HM08}.

The sets of non-negative (respectively non-positive) rational and real numbers are denoted by $\Q_+$ and $\R_+$ ($\Q_-$ and $\R_-$ respectively).
\vspace{5mm}
\paragraph{\bf Convex geometry}
If $\mcal{S}=\sum\N e_i$ is a submonoid of $\N^n$, I denote $\mcal{S}_\Q=\sum\Q_+e_i$ and $\mcal{S}_\R=\sum\R_+e_i$.
A monoid $\mcal{S}\subset\N^n$ is {\em saturated\/} if $\mcal{S}=\mcal{S}_\R\cap\N^n$.

If $\mcal{S}=\sum_{i=1}^n\N e_i$ and $\kappa_1,\dots,\kappa_n$ are positive integers,
the submonoid $\mcal{S}'=\sum_{i=1}^n\N \kappa_ie_i$ is called a {\em truncation\/} of $\mcal{S}$.
If $\kappa_1=\dots=\kappa_n=\kappa$, I denote $\mcal S^{(\kappa)}:=\sum_{i=1}^n\N \kappa e_i$, and this truncation does not depend on a
choice of generators of $\mcal S$.

A submonoid $\mcal{S}=\sum\N e_i$  of $\N^n$ (respectively a cone $\mcal{C}=\sum\R_+ e_i$ in $\R^n$) is called {\em simplicial\/}
if its generators $e_i$ are linearly independent in $\R^n$, and the $e_i$ form a {\em basis\/} of $\mcal{S}$
(respectively $\mcal{C}$).

I often use Gordan's lemma without explicit mention, see \cite[Lemma 2.4]{Laz07}, and also that if $\lambda\colon\mcal M\rightarrow\mcal S$
is an additive surjective map between finitely generated saturated monoids, and if $\mcal C$ is a rational polyhedral cone in $\mcal S_\R$, then
$\lambda^{-1}(\mcal S\cap\mcal C)=\mcal M\cap\lambda^{-1}(\mcal C)$. In particular, the inverse image of a saturated finitely
generated submonoid of $\mcal S$ is a saturated finitely generated submonoid of $\mcal M$.

For a polytope $\mcal P\subset\R^n$, I denote $\mcal P_\Q=\mcal P\cap\Q^n$. A polytope is {\em rational\/} if it is the convex hull of finitely
many rational points.

If $\mcal B\subset\R^n$ is a convex set, then $\R_+\mcal B$ will denote the set $\{rb:r\in\R_+,b\in\mcal B\}$. In particular,
if $\mcal B$ is a rational polytope, $\R_+\mcal B$ is a rational polyhedral cone. The dimension of the rational polytope $\mcal P$,
denoted $\dim\mcal P$, is the dimension of the smallest rational affine space containing $\mcal P$.

Let $\mcal S\subset \N^n$ be a finitely generated monoid, $\mcal{C}\in\{\mcal S,\mcal S_\Q,\mcal S_\R\}$ and $V$ an $\R$-vector space.
A function $f\colon\mcal{C}\rightarrow V$ is: {\em positively homogeneous\/}
if $f(\lambda x)=\lambda f(x)$ for $x\in\mcal C,\lambda\geq0$; {\em superadditive\/} if $f(x)+f(y)\leq f(x+y)$ for $x,y\in\mcal{C}$;
{\em $\Q$-superadditive\/} if $\lambda f(x)+\mu f(y)\leq f(\lambda x+\mu y)$ for $x,y\in\mcal C,\lambda,\mu\in\Q_+$; {\em $\Q$-additive\/} if
the previous inequality is an equality; and {\em superlinear\/} if $\lambda f(x)+\mu f(y)\leq f(\lambda x+\mu y)$ for
$x,y\in\mcal S_\R,\lambda,\mu\in\R_+$. Similarly for {\em additive, subadditive, sublinear\/}.
It is {\em piecewise additive\/} if there is a finite polyhedral decomposition $\mcal{C}=\bigcup\mcal{C}_i$ such that
$f_{|\mcal{C}_i}$ is additive for every $i$; additionally, if each $\mcal C_i$ is a rational cone, it is {\em rationally piecewise additive\/}.
Similarly for (rationally) piecewise linear.
Assume furthermore that $f$ is linear on $\mcal{C}$ and $\dim\mcal{C}=n$. The {\em linear extension of $f$ to $\R^n$\/} is the unique linear function
$\ell\colon\R^n\rightarrow V$ such that $\ell_{|\mcal{C}}=f$.

In this paper the {\em relative interior\/} of a cone $\mcal{C}=\sum\R_+e_i\subset\R^n$, denoted by $\relint\mcal{C}$, is the topological interior
of $\mcal{C}$ in the space $\sum\R e_i$ union the origin. If $\dim\mcal{C}=n$, we instead call it the {\em interior\/} of $\mcal{C}$ and
denote it by $\Int\mcal{C}$. The boundary of a closed set $\mcal C$ is denoted by $\partial\mcal C$.\vspace{5mm}
\paragraph{\bf Asymptotic invariants}
The standard references on asymptotic invariants arising from linear series are \cite{Nak04,ELMNP}.

\begin{dfn}
Let $X$ be a variety and $D\in\WDiv(X)_\R$. For $k\in\{\Z,\Q,\R\}$, define
$$|D|_k=\{C\in\WDiv(X)_k:C\geq0,C\sim_kD\}.$$
If $T$ is a prime divisor on $X$ such that $T\not\subset\Fix|D|$, then $|D|_T$ denotes the image of the linear system $|D|$ under restriction
to $T$. The {\em stable base locus\/} of $D$ is $\B(D)=\bigcap_{C\in|D|_\R}\Supp C$ if $|D|_\R\neq\emptyset$, otherwise we define $\B(D)=X$.
The {\em diminished base locus\/} is $\B_-(D)=\bigcup_{\varepsilon>0}\B(D+\varepsilon A)$ for an ample divisor $A$; this definition does not depend on
a choice of $A$. In particular $\B_-(D)\subset\B(D)$.
\end{dfn}

It is elementary that $\B(D_1+D_2)\subset\B(D_1)\cup\B(D_2)$ for $D_1,D_2\in\WDiv(X)_\R$. In other words, the set
$\{D\in\WDiv(X)_\R:x\notin\B(D)\}$ is convex for every point $x\in X$.
By \cite[Lemma 3.5.3]{BCHM}, $\B(D)=\bigcap_{C\in|D|_\Q}\Supp C$ when $D$ is a $\Q$-divisor, which is the standard definition of the stable base locus.

\begin{dfn}
Let $Z$ be a closed subvariety of a smooth variety $X$ and let $D$ be an effective $\Q$-divisor on $X$.
The {\em asymptotic order of vanishing of $D$ along $Z$\/} is
$$\ord_Z\|D\|=\inf\{\mult_ZC:C\in|D|_\Q\}.$$
\end{dfn}

More generally, one can consider any discrete valuation $\nu$ of $k(X)$ and define
$$\nu\|D\|=\inf\{\nu(C):C\in|D|_\Q\}$$
for an effective $\Q$-divisor $D$. Then \cite{ELMNP} shows that $\nu\|D\|=\nu\|E\|$ if $D$ and $E$ are numerically equivalent big divisors, and that
$\nu$ extends to a sublinear function on $\bigcone(X)_\R$.

\begin{rem}\label{rem:10}
When $X$ is projective, Nakayama in \cite{Nak04} defines a function $\sigma_Z\colon\overline{\bigcone(X)}\rightarrow\R_+$ by
$$\sigma_Z(D)=\lim_{\varepsilon\downarrow0}\ord_Z\|D+\varepsilon A\|$$
for any ample $\R$-divisor $A$, and shows that it agrees with $\ord_Z\|\cdot\|$ on big classes. Analytic properties of these
invariants were studied in \cite{Bou04}.
\end{rem}

We can define the restricted version of the invariant introduced.

\begin{dfn}
Let $S$ be a smooth divisor on a smooth variety $X$ and let $D\in\Div(X)_\Q^{\kappa\geq0}$ be such that $S\not\subset\B(D)$.
Let $P$ be a closed subvariety of $S$. The {\em restricted asymptotic order of vanishing of $|D|_S$ along $P$\/} is
$$\ord_P\|D\|_S=\inf\{\mult_P C:kC\in|kD|_S\textrm{ for some }k\geq1\}.$$
\end{dfn}

\begin{rem}\label{rem:11}
Similarly as in Remark \ref{rem:10}, \cite{Hac08} introduces a function
$\sigma_P\|\cdot\|_S\colon\mcal C_-\rightarrow\R_+$ by
$$\sigma_P\|D\|_S=\lim_{\varepsilon\downarrow0}\ord_P\|D+\varepsilon A\|_S$$
for any ample $\R$-divisor $A$, where $\mcal C_-\subset\overline{\bigcone(X)}$ is the set of classes of divisors $D$ such that $S\not\subset\B_-(D)$.
Then one can define a formal sum $N_\sigma\|D\|_S=\sum\sigma_P\|D\|_S\cdot P$ over all prime divisors $P$ on $S$. If $S\not\subset\B(D)$, then
for every $\varepsilon_0>0$ we have $\lim_{\varepsilon\downarrow\varepsilon_0}\sigma_P\|D+\varepsilon A\|_S=\ord_P\|D+\varepsilon_0A\|_S$
for any ample divisor $A$ on $X$ similarly as in \cite[Lemma 2.1.1]{Nak04}, cf.\ \cite[Lemma 7.8]{Hac08}.
\end{rem}

In this paper I need a few basic properties cf.\ \cite[Lemma 7.14]{Hac08}.

\begin{lem}\label{lem:restrictedord}
Let $S$ be a smooth divisor on a smooth projective variety $X$, let $D\in\Div(X)_\Q^{\kappa\geq0}$ be such that $S\not\subset\B(D)$
and let $P$ be a closed subvariety of $S$. If $A$ is an ample $\Q$-divisor on $X$, then $\ord_P\|D+A\|_S\leq\ord_P\|D\|_S$, and in particular
$\sigma_P\|D\|_S\leq\ord_P\|D\|_S$. If $\sigma_P\|D\|_S=0$, then there is a positive integer $l$ such that $\mult_P\Fix|l(D+A)|_S=0$.
\end{lem}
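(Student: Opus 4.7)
The plan is to prove the three assertions sequentially. For the first inequality, given any $C$ with $kC\in|kD|_S$, lift it to $C'\in|kD|$ with $C'|_S=kC$. Choose a positive integer $m$ so that $mA$ is very ample and Cartier; then $|mkA|$ is base point free, and a general $B\in|mkA|$ has support avoiding both $P$ and $S$. Then $mC'+B\in|mk(D+A)|$, and restricting to $S$ gives $mkC+B|_S\in|mk(D+A)|_S$. Setting $C^{*}=C+\frac{1}{mk}B|_S$, we have $mkC^{*}\in|mk(D+A)|_S$ and $\mult_P C^{*}=\mult_P C$, since $\mult_P B|_S=0$. Infimizing over admissible $(k,C)$ yields $\ord_P\|D+A\|_S\le\ord_P\|D\|_S$. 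Applying this with $\varepsilon A$ in place of $A$ for each $\varepsilon>0$ and letting $\varepsilon\downarrow 0$ via the definition of $\sigma_P$ from Remark~\ref{rem:11} gives the second inequality $\sigma_P\|D\|_S\le\ord_P\|D\|_S$.

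For the third claim, I first observe that the first inequality, applied with $E=D+\varepsilon_1 A$ and ample divisor $(\varepsilon_2-\varepsilon_1)A$ for $0<\varepsilon_1<\varepsilon_2$, shows that $\varepsilon\mapsto\ord_P\|D+\varepsilon A\|_S$ is non-increasing on $(0,\infty)$. Hence $\sigma_P\|D\|_S=\sup_{\varepsilon>0}\ord_P\|D+\varepsilon A\|_S$, and the hypothesis $\sigma_P\|D\|_S=0$ forces $\ord_P\|D+\varepsilon A\|_S=0$ for every $\varepsilon>0$; in particular $\ord_P\|D+\frac{A}{2}\|_S=0$. If I can produce $k$ and $C$ with $kC\in|k(D+\frac{A}{2})|_S$ and $\mult_P(kC)=0$, then the transfer construction of the first step, applied now with $\frac{A}{2}$ as the ample divisor (choosing a general $B\in|mk\cdot\frac{A}{2}|$ missing $P$ and $S$), produces an element of $|mk(D+A)|_S$ with vanishing multiplicity at $P$, so $l=mk$ works.

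The crux of the argument is extracting such a pair $(k,C)$ from the infimum condition $\ord_P\|D+\frac{A}{2}\|_S=0$. Any $(k,C)$ with $\mult_P C<\frac{1}{k}$ automatically satisfies $\mult_P(kC)=0$, since $\mult_P\Fix|\cdot|_S$ is integer-valued; however, the infimum being zero does not by itself guarantee the existence of such a pair. The plan is to iterate the first-step transfer on the divisors $D+\frac{A}{2^n}$, combining the integrality of $\mult_P\Fix$ with the continuity of $\sigma_P\|\cdot\|_S$ established in Remark~\ref{rem:11} and with the ampleness of the remaining portion of $A$, following the strategy of \cite[Lemma 7.14]{Hac08}; this last step is the main technical obstacle, and it is where the ampleness hypothesis enters in an essential way.
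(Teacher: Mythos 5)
Your argument for the first two assertions is correct and matches what the paper dismisses as trivial: transfer a member of $|kD|_S$ to $|mk(D+A)|_S$ by adding the restriction of a general member of $|mkA|$ missing $P$, and deduce monotonicity of $\varepsilon\mapsto\ord_P\|D+\varepsilon A\|_S$, hence $\ord_P\|D+\tfrac12A\|_S=0$. But the third assertion is the entire content of the lemma, and there you stop exactly at the difficulty. You correctly observe that $\ord_P\|D+\tfrac12A\|_S=0$ only supplies pairs $(k,C)$ with $\mult_P C$ arbitrarily small while $k$ may grow, so one can never conclude $\mult_P C<1/k$; your proposed fix (iterating the transfer over $D+A/2^n$ and invoking integrality of $\mult_P\Fix$ plus continuity of $\sigma_P$) does not close this, because integrality of $\mult_P\Fix|l(D+A)|_S$ at a \emph{fixed} level $l$ gives no control: the fixed part at level $l$ can strictly exceed the asymptotic order, and nothing in your iteration bounds it. This is a genuine gap, not a deferrable technicality.

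The missing idea is a lifting argument via multiplier ideals, and the key trick is to fix the level $l$ \emph{in advance}, depending only on $A$ and not on the divisor realizing the small multiplicity. Concretely, with $n=\dim X$ and $H$ very ample, choose $l$ so that $H'=\tfrac l2A-(K_X+S)-(n+1)H$ is very ample; then pick $\Delta\sim_\Q D+\tfrac12A$ with $S\not\subset\Supp\Delta$ and $\mult_P\Delta_{|S}<1/l$ (possible precisely because $\ord_P\|D+\tfrac12A\|_S=0$ and $l$ is already fixed). Nadel vanishing for $\mcal J_{l\Delta_{|S}}$ twisted by $K_S+H'_{|S}+(n+1)H_{|S}+l\Delta_{|S}+mH_{|S}$ shows, via Castelnuovo--Mumford regularity, that $\mcal J_{l\Delta_{|S}}(l(D+A))$ is globally generated, and its sections lift from $S$ to $H^0(X,l(D+A))$ by \cite[Lemma 4.4(3)]{HM08}. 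Since $\mult_P(l\Delta_{|S})<1$, the multiplier ideal does not vanish along $P$, so some member of $|l(D+A)|_S$ misses $P$, i.e.\ $\mult_P\Fix|l(D+A)|_S=0$. This is where the ampleness of $A$ enters essentially --- as positivity feeding the vanishing theorem, not as a source of general members to add on.
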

\begin{proof}
The first statement is trivial. For the second one, we have $\ord_P\|D+\frac12A\|_S=0$. Set $n=\dim X$, let $H$ be a very ample divisor on $X$
and fix a positive integer $l$ such that $H'=\frac l2A-(K_X+S)-(n+1)H$ is very ample. Let $\Delta\sim_\Q D+\frac12A$ be a $\Q$-divisor
such that $S\not\subset\Supp\Delta$ and $\mult_P\Delta_{|S}<1/l$. We have
$$H^i(X,\mcal J_{l\Delta_{|S}}(K_S+H'_{|S}+(n+1)H_{|S}+l\Delta_{|S}+mH_{|S}))=0$$
for $m\geq-n$ by Nadel vanishing. Since $l(D+A)\sim_\Q K_X+S+H'+(n+1)H+l\Delta$, the sheaf $\mcal J_{l\Delta_{|S}}(l(D+A))$ is globally generated
by \cite[Lemma 5.7]{HM08} and its sections lift to $H^0(X,l(D+A))$ by \cite[Lemma 4.4(3)]{HM08}. Since $\mult_P(l\Delta_{|S})<1$,
$\mcal J_{l\Delta_{|S}}$ does not vanish along $P$ and so $\mult_P\Fix|l(D+A)|_S=0$.
\end{proof}

\section{Higher rank algebras}\label{sec:3}

In this section I adapt some of the definitions from \cite{Laz07} to suit the context of this paper.

\begin{dfn}\label{dfn:1}
Let $X$ be a variety, $\mcal{S}$ a finitely generated submonoid of $\N^r$, and let
$\mu\colon\mcal{S}\rightarrow\WDiv(X)^{\kappa\geq0}$ be an additive map.
The algebra
$$R(X,\mu(\mcal{S}))=\bigoplus_{s\in\mcal{S}}H^0(X,\OO_X(\mu(s)))$$
is called the {\em divisorial $\mcal S$-graded algebra associated to $\mu$.\/} When $\mcal S=\bigoplus_{i=1}^\ell\N e_i$ is a simplicial cone,
the algebra $R(X,\mu(\mcal S))$ is called the {\em Cox ring associated to $\mu$\/}, and is denoted also by $R(X;\mu(e_1),\dots,\mu(e_\ell))$.
\end{dfn}

\begin{rem}\label{rem:1}
Algebras considered in this paper are {\em algebras of sections\/}. I will occasionally, and without explicit mention,
view them as algebras of rational functions, in particular to be able to write $H^0(X,D)\simeq H^0(X,\Mob(D))\subset k(X)$.

Assume now that $X$ is smooth, $D\in\Div(X)$ and that $\Gamma$ is a prime divisor
on $X$. If $\sigma_\Gamma$ is the global section of $\OO_X(\Gamma)$ such that $\ddiv \sigma_\Gamma=\Gamma$, from the exact sequence
$$0\rightarrow H^0(X,\OO_X(D-\Gamma))\stackrel{\cdot \sigma_\Gamma}{\longrightarrow}H^0(X,\OO_X(D))\stackrel{\rho_{D,\Gamma}}{\longrightarrow}
H^0(\Gamma,\OO_\Gamma(D))$$
we define $\res_\Gamma H^0(X,\OO_X(D))=\im(\rho_{D,\Gamma})$. For $\sigma\in H^0(X,\OO_X(D))$, I denote
$\sigma_{|\Gamma}:=\rho_{D,\Gamma}(\sigma)$. Observe that
\begin{equation}\label{eq:1}
\ker(\rho_{D,\Gamma})=H^0(X,\OO_X(D-\Gamma))\cdot\sigma_\Gamma,
\end{equation}
and that $\res_\Gamma H^0(X,\OO_X(D))=0$ if $\Gamma\subset\Bs|D|$. If $D\sim D'$ such that the restriction $D'_{|\Gamma}$ is defined,
then
$$\res_\Gamma H^0(X,\OO_X(D))\simeq\res_\Gamma H^0(X,\OO_X(D'))\subset H^0(\Gamma,\OO_\Gamma(D'_{|\Gamma})).$$
The {\em restriction of $R(X,\mu(\mcal S))$ to $\Gamma$\/} is defined as
$$\res_\Gamma R(X,\mu(\mcal S))=\bigoplus_{s\in\mcal{S}}\res_\Gamma H^0(X,\OO_X(\mu(s))).$$
This is an $\mcal S$-graded, not necessarily divisorial algebra.
\end{rem}

\begin{rem}
Under assumptions from Definition \ref{dfn:1} we define the map $\bMob_\mu\colon\mcal S\rightarrow\bMob(X)$
by $\bMob_\mu(s)=\bMob(\mu(s))$ for every $s\in\mcal S$. Then we have a b-divisorial algebra
$$R(X,\bMob_\mu(\mcal S))\simeq R(X,\mu(\mcal S))$$
as defined in \cite{Laz07}. If $\mcal S'$ is a finitely generated submonoid of $\mcal S$, I use $R(X,\mu(\mcal S'))$
to denote $R(X,\mu_{|\mcal S'}(\mcal S'))$. If $\mcal S$ is a submonoid of $\WDiv(X)^{\kappa\geq0}$ and $\iota\colon\mcal S\rightarrow\mcal S$
is the identity map, I use $R(X,\mcal S)$ to denote $R(X,\iota(\mcal S))$.
\end{rem}

The following lemma summarises the basic properties of higher rank finite generation.

\begin{lem}\label{lem:1}
Let $\mcal{S}\subset\N^n$ be a finitely generated monoid and let $R=\bigoplus_{s\in\mcal S}R_s$ be an $\mcal S$-graded algebra.
\begin{enumerate}
\item Let $\mcal S'$ be a truncation of $\mcal S$. If the $\mcal S'$-graded algebra $R'=\bigoplus_{s\in\mcal S'}R_s$
is finitely generated over $R_0$, then $R$ is finitely generated over $R_0$.

\item Assume furthermore that $\mcal S$ is saturated and let $\mcal S''\subset\mcal S$ be a finitely generated saturated submonoid.
If $R$ is finitely generated over $R_0$,
then the $\mcal S''$-graded algebra $R''=\bigoplus_{s\in\mcal S''}R_s$ is finitely generated over $R_0$.

\item Let $X$ be a variety and let
$\mu\colon\mcal{S}\rightarrow\WDiv(X)^{\kappa\geq0}$ be an additive map. If there exists a rational polyhedral subdivision
$\mcal{S}_\R=\bigcup_{i=1}^k\Delta_i$ such that, for each $i$, $\bMob_{\mu|\Delta_i\cap\mcal{S}}$ is an additive map up to truncation, then
the algebra $R(X,\mu(\mcal{S}))$ is finitely generated.
\end{enumerate}
\end{lem}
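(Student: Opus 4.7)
My plan is to handle the three parts in turn. For (1), the inclusion $\mcal S' \hookrightarrow \mcal S$ realises $R$ as a finitely generated module over $R'$: writing $\mcal S = \sum_{i=1}^n \N e_i$ and $\mcal S' = \sum \N \kappa_i e_i$, every $s \in \mcal S$ decomposes as $s = s' + r$ with $s' \in \mcal S'$ and $r$ in the finite set $F = \{\sum r_i e_i : 0 \le r_i < \kappa_i\}$. Hence $R = \sum_{r \in F} R_r \cdot R'$ is finite over $R'$ (each graded piece $R_r$ is finite-dimensional over $R_0$ in the cases of interest, coming from global sections on a projective variety), and the claim follows from finite generation of $R'$.

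For (2), I would pick homogeneous $R_0$-algebra generators $x_1, \ldots, x_p$ of $R$ with $\deg x_j = d_j \in \mcal S$ and consider the additive map $\lambda \colon \N^p \to \mcal S$ given by $\lambda(a) = \sum a_j d_j$. Because $\mcal S''$ is saturated, $\lambda^{-1}(\mcal S'') = \N^p \cap \lambda^{-1}(\mcal S''_\R)$ is a finitely generated submonoid of $\N^p$ by Gordan's lemma as recorded in Section \ref{sec:2}. Fixing a finite generating set $\alpha_1, \ldots, \alpha_q$ of $\lambda^{-1}(\mcal S'')$, every $f \in R_s$ with $s \in \mcal S''$ is a linear combination of monomials $x^\alpha$ with $\alpha \in \lambda^{-1}(\mcal S'')$, hence a polynomial in the $x^{\alpha_j}$, so these $q$ elements generate $R''$ over $R_0$.

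Part (3) is the substantive statement, and the plan is to reduce to a single simplicial cone on which $\bMob_\mu$ is honestly additive and then invoke a morphism-into-a-product-of-projective-spaces argument. Using (1), passing to a sufficiently divisible common truncation makes $\bMob_{\mu|\Delta_i \cap \mcal S}$ additive (not merely additive up to truncation) for every $i$. Refining the polyhedral decomposition and truncating again, I may further assume each $\Delta_i$ is rational simplicial with vertex generators lying in $\mcal S$. Every homogeneous piece of $R(X, \mu(\mcal S))$ sits inside $R(X, \mu(\mcal S \cap \Delta_i))$ for some $i$ and the decomposition is finite, so the union of finite generating sets for the $R(X, \mu(\mcal S \cap \Delta_i))$ generates the whole algebra; thus it suffices to treat a single simplicial $\Delta_i$.

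Fix such a $\Delta_i$ with generators $e_1, \ldots, e_m$ and pass to a sufficiently high log resolution $\pi \colon Y \to X$ on which every $M_j := \bMob(\mu(e_j))_Y$ is a genuine base-point-free divisor. Additivity of $\bMob_\mu$ gives $\bMob(\mu(\sum n_j e_j))_Y = \sum n_j M_j$ on $Y$, and therefore $H^0(X, \mu(\sum n_j e_j)) \cong H^0(Y, \sum n_j M_j)$. The free systems $|M_j|$ assemble into a morphism $\phi \colon Y \to \prod_j \PP^{N_j}$; taking its Stein factorisation $Y \to W \to \prod_j \PP^{N_j}$, the algebra $\bigoplus_{n \in \N^m} H^0(Y, \sum n_j M_j)$ is identified with the multi-homogeneous coordinate ring of the image of $W$ in $\prod_j \PP^{N_j}$, which is finitely generated over $R_0$ as a quotient of the polynomial coordinate ring of the product. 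The main obstacle is precisely (3): coordinating the truncation, the simplicial refinement, and the choice of a single model $Y$ so that additivity of $\bMob_\mu$ really produces base-point-free divisors on one common $Y$, and then running the Stein-factorisation step in the relative setting where $X$ is only projective over an affine base $Z$.
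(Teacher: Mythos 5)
The paper offers no proof of this lemma beyond citing \cite[Lemmas 5.1 and 5.2]{Laz07} and \cite[Lemma 4.8]{ELMNP}, so your argument has to stand alone. Part (2) is correct: projecting a polynomial expression onto its degree-$s$ component shows every element of $R_s$ is an $R_0$-combination of monomials $x^\alpha$ with $\lambda(\alpha)\in\lambda^{-1}(\mcal S'')$, and saturation of $\mcal S''$ plus Gordan's lemma makes $\lambda^{-1}(\mcal S'')$ finitely generated. Part (3) follows the same route as \cite[Lemma 4.8]{ELMNP} (truncate, refine to simplicial rational cones, descend to one model $Y$ where the traces $M_j$ are free, map to a product of projective spaces); the coordination issues you flag are real but standard, and the only step I would add is that $\bigoplus_n H^0(Y,\sum n_jM_j)$ is not the coordinate ring of the image of $W$ but a finitely generated \emph{module} over it, via Serre finiteness of $\bigoplus_n H^0\big(\prod\PP^{N_j},\phi_*\OO_Y(n)\big)$ in the relative setting over $Z$.

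The genuine gap is in part (1). The identity $R=\sum_{r\in F}R_r\cdot R'$ asserts that the multiplication maps $R_r\otimes R_{s'}\rightarrow R_{r+s'}$ are jointly surjective, and this is false even for section rings: for $R=\bigoplus_n H^0(X,nD)$ and $\kappa=2$ one can have $H^0(X,D)=0$ while $H^0(X,3D)\neq0$, so $R_1\cdot R'$ misses $R_3$ entirely. What must actually be proved is that each coset module $M_r=\bigoplus_{s'\in\mcal S'}R_{r+s'}$ is a finitely generated $R'$-module, and its generators can sit in arbitrarily high degree; finite-dimensionality of the individual pieces $R_r$ does not yield this, since each $M_r$ has infinitely many graded components. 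Indeed the statement is false for an arbitrary abstract $\mcal S$-graded algebra (take $\mcal S=\N$, $R_0=k$, $R_n=k$ for $n$ odd, $R_n=0$ for $n>0$ even, with all products of positive-degree elements equal to zero: then $R^{(2)}=k$ is finitely generated but $R$ is not), so any correct proof must exploit that $R$ is an algebra of sections --- for instance by passing to $\Proj$ of a further truncation of $R'$ and invoking coherence to bound the generators of each $M_r$, which is how the cited references proceed. As written, this step would fail.
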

\begin{proof}
See \cite[Lemmas 5.1 and 5.2]{Laz07} and \cite[Lemma 4.8]{ELMNP}.
\end{proof}

I will need the following result in the proof of Proposition \ref{pro:1} and in Section \ref{plt}.

\begin{lem}\label{concave}
Let $X$ be a variety, $\mcal{S}\subset\N^r$ a finitely generated monoid and let $f\colon\mcal{S}\rightarrow G$ be
a superadditive map to a monoid $G$ which is a subset
of $\WDiv(X)$ or $\bMob(X)$, such that for every $s\in\mcal{S}$ there is a positive integer $\iota_s$ such that $f_{|\N\iota_ss}$ is an additive map.

Then there is a unique $\Q$-superadditive function $f^\sharp\colon\mcal{S}_\Q\rightarrow G_\Q$ such that
for every $s\in\mcal{S}$ there is a positive integer $\lambda_s$ with $f(\lambda_s s)=f^\sharp(\lambda_s s)$.
Furthermore, let $\mcal{C}$ be a rational polyhedral subcone of $\mcal{S}_\R$. Then $f_{|\mcal{C}\cap\mcal{S}}$ is additive up to
truncation if and only if $f^\sharp_{|\mcal{C}\cap\mcal S_\Q}$ is $\Q$-additive.

If $\mu\colon\mcal S\rightarrow\Div(X)$ is an additive map and $\m=\bMob_\mu$ is such that for every $s\in\mcal{S}$ there is a positive integer $\iota_s$
such that $\m_{|\N\iota_ss}$ is an additive map, then we have
\begin{equation}\label{eq:2}
\m^\sharp(s)=\overline{\mu(s)}-\sum\big(\ord_E\|\mu(s)\|\big)E,
\end{equation}
where the sum runs over all geometric valuations $E$ on $X$.
\end{lem}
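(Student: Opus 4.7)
The plan is to define $f^\sharp$ on $\mcal S$ by averaging, then extend by positive homogeneity. For $s\in\mcal S$ set
\[
f^\sharp(s) := \frac{1}{\iota_s} f(\iota_s s) \in G_\Q,
\]
where $\iota_s$ is supplied by the hypothesis. Independence of the choice of $\iota_s$ follows by comparing two witnesses $\iota,\iota'$ through their product, since $\iota' f(\iota s) = f(\iota\iota' s) = \iota f(\iota' s)$ by the additivity of $f$ along both rays. The analogous trick establishes $f^\sharp(ks) = k f^\sharp(s)$ on $\N$, allowing a unique extension to $\mcal S_\Q$ via $f^\sharp(s/n) := f^\sharp(s)/n$. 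For $\Q$-superadditivity, given $s_1,s_2\in\mcal S$ one chooses $\iota$ divisible by each of $\iota_{s_1}, \iota_{s_2}, \iota_{s_1+s_2}$ so that all three values $f^\sharp(s_i)$ and $f^\sharp(s_1+s_2)$ are simultaneously computed as $f(\iota\cdot)/\iota$, and applies superadditivity of $f$. A by-product worth recording is the pointwise inequality $f(s)\leq f^\sharp(s)$ on $\mcal S$, coming from $\iota_s f(s) \leq f(\iota_s s)$.

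For uniqueness, suppose $g$ is $\Q$-superadditive with $g(\lambda_s s)=f(\lambda_s s)$ for some $\lambda_s$. Then $g$ is automatically positively homogeneous, so applying the hypothesis for $g$ to $\iota_s s$ gives $\lambda=\lambda_{\iota_s s}$ with $g(\lambda\iota_s s)=f(\lambda\iota_s s)=\lambda f(\iota_s s)$ by additivity of $f$ on $\N\iota_s s$; hence $g(\iota_s s)=f(\iota_s s)$ and thus $g=f^\sharp$. For the equivalence on a rational polyhedral subcone $\mcal C\subset\mcal S_\R$, the ``only if'' direction is immediate, since additivity up to truncation makes $f^\sharp$ the $\Q$-linear extension of an additive function on generators. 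For ``if'', pick generators $e_1,\dots,e_k$ of $\mcal C\cap\mcal S$, set $\kappa_i:=\iota_{e_i}$, and observe that for any $s=\sum n_i\kappa_i e_i$ in the resulting truncation,
\[
f(s) \leq f^\sharp(s) = \sum n_i f^\sharp(\kappa_i e_i) = \sum n_i f(\kappa_i e_i) \leq f(s),
\]
where the first inequality is $f\leq f^\sharp$ and the last is superadditivity of $f$. Equality throughout forces additivity of $f$ on the truncation $\sum\N\kappa_i e_i$.

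For the explicit formula \eqref{eq:2} I would evaluate both sides on a sufficiently high smooth model $\pi\colon Y\to X$. The additivity of $\m$ on $\N\iota_s s$ forces $\Fix\pi^*|k\mu(\iota_s s)| = k\,\Fix\pi^*|\mu(\iota_s s)|$ for every $k\geq1$, so the sequence $\frac{1}{k'}\mult_E\Fix\pi^*|k'\mu(s)|$ is constant along multiples of $\iota_s$; the standard subadditivity $\Fix|\iota_s k'\mu(s)|\leq\iota_s\Fix|k'\mu(s)|$ then forces this constant to be the full infimum, which by definition is $\ord_E\|\mu(s)\|$ for every prime divisor $E$ on $Y$. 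Hence
\[
\m^\sharp(s)|_Y = \pi^*\mu(s) - \sum_{E\text{ on }Y}\ord_E\|\mu(s)\|\cdot E,
\]
which is precisely the trace of $\overline{\mu(s)} - \sum_E\ord_E\|\mu(s)\|\cdot E$ on $Y$; passage to arbitrarily high models then yields the claimed b-divisor identity.

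The main obstacle I anticipate is not conceptual but combinatorial: every step of the construction hinges on choosing a single integer $\iota$ that simultaneously witnesses additivity of $f$ along several different rays $\N s_i$, and when one finally translates the argument to b-divisors and the Cartier closure in \eqref{eq:2} these compatibility choices must remain compatible with the choice of birational model; none of the individual computations is deep, but keeping the simultaneous divisibility choices straight is where the argument asks for care.
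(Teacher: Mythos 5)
Your proposal is correct: the averaging definition $f^\sharp(s)=f(\iota_s s)/\iota_s$, its independence of the witness, the resulting $\Q$-homogeneity and superadditivity, the sandwich argument $f(s)\leq f^\sharp(s)=\sum n_i f(\kappa_i e_i)\leq f(s)$ for the truncation equivalence, and the identification of the constant subsequence of $\frac1m\mult_E\Fix|m\mu(s)|$ with its infimum $\ord_E\|\mu(s)\|$ are all sound. This is essentially the same straightening construction that the paper outsources to the cited reference (where the displayed formula for $\m^\sharp$ is taken as the definition), so nothing further is needed.
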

\begin{proof}
See the proof of \cite[Lemma 5.4]{Laz07}. Equation \eqref{eq:2} is a restatement of the definition of $\m^\sharp$ from that proof.
\end{proof}

\begin{dfn}
In the context of Lemma \ref{concave}, the function $f^\sharp$ is called {\em the straightening of $f$\/}.
\end{dfn}

\begin{pro}\label{pro:1}
Let $X$ be a variety, $\mcal S\subset\N^r$ a finitely generated saturated monoid and $\mu\colon\mcal S\rightarrow \WDiv(X)^{\kappa\geq0}$ an
additive map. Let $\mcal L$ be a finitely generated submonoid of $\mcal S$ and assume $R(X,\mu(\mcal S))$ is finitely generated. Then
$R(X,\mu(\mcal L))$ is finitely generated. Moreover, the map $\m=\bMob_{\mu|\mcal L}$ is piecewise additive up to truncation.
In particular, there is a positive integer $p$ such that $\bMob_\mu(ips)=i\bMob_\mu(ps)$ for every $i\in\N$ and every $s\in\mcal L$.
\end{pro}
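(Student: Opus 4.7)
The plan is to establish the piecewise-additivity claim for $\bMob_{\mu|\mcal L}$ first; finite generation of $R(X,\mu(\mcal L))$ then follows from Lemma \ref{lem:1}(3), and the universal period $p$ of the last statement arises by taking a common multiple of the truncation periods on the finitely many pieces of the decomposition. More concretely, I would first produce a rational polyhedral decomposition of $\mcal S_\R$ on which $\bMob_\mu$ is piecewise additive up to truncation, and then restrict it to the subcone $\mcal L_\R\subset\mcal S_\R$.

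To set up the straightening, observe that for each $s\in\mcal S$ the saturation of $\N s$ in $\N^r$ is a cyclic saturated submonoid $\N s_0\subset\mcal S$ with $s=ns_0$ for some $n\in\N$, so Lemma \ref{lem:1}(2) forces the one-parameter algebra $\bigoplus_k H^0(X,k\mu(s_0))$ to be finitely generated; classical one-dimensional results then produce an $\iota_s$ with $\bMob(k\iota_s\mu(s))=k\bMob(\iota_s\mu(s))$ for every $k\geq 1$. Thus the hypothesis of Lemma \ref{concave} is met and the straightening $\m^\sharp$ of $\m=\bMob_\mu$ on $\mcal S_\Q$ is defined by \eqref{eq:2}. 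Since $R(X,\mu(\mcal S))$ is finitely generated, only finitely many valuation functions $\ord_E\|\mu(\cdot)\|$ are nontrivial on $\mcal S_\R$ and each is rationally piecewise linear; this is the Zariski-decomposition-type content packaged in \cite[\S5]{Laz07}, and it upgrades $\m^\sharp$ to a piecewise $\Q$-linear function on a rational polyhedral decomposition $\mcal S_\R=\bigcup\Delta_i$. By Lemma \ref{concave}, $\bMob_\mu$ is piecewise additive up to truncation on the $\Delta_i$.

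Restricting the $\Delta_i$ to $\mcal L_\R$ yields a rational polyhedral decomposition on which $\m=\bMob_{\mu|\mcal L}$ remains piecewise additive up to truncation, proving the second claim; Lemma \ref{lem:1}(3) then gives finite generation of $R(X,\mu(\mcal L))$. For the universal period, let $p_i$ be a truncation period for $\Delta_i$ and set $p:=\mathrm{lcm}(p_i)$; for any $s\in\mcal L$ lying in some $\Delta_i$ the identity $\bMob_\mu(ips)=i\bMob_\mu(ps)$ is immediate from additivity on that cone. The main obstacle is extracting the rational polyhedral decomposition of $\mcal S_\R$ from the abstract finite generation hypothesis: translating a finite list of algebra generators into a concrete fan and verifying uniform additivity of the mobile b-divisor on each maximal cone is precisely the content of the machinery developed in \cite[\S5]{Laz07}, which I would either cite directly or replay using the degrees of homogeneous generators to define the initial fan and then refine.
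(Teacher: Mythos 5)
Your proposal is correct and follows essentially the same route as the paper: establish additivity up to truncation on rays via Lemma \ref{lem:1}(2) applied to saturated rank-one submonoids, form the straightening $\m^\sharp$ of Lemma \ref{concave}, invoke the rational polyhedral decomposition on which the functions $\ord_E\|\cdot\|$ are $\Q$-additive (the paper cites the proof of \cite[Theorem 4.1]{ELMNP} for this, which is the same content you attribute to \cite[\S 5]{Laz07}), and conclude with Lemma \ref{lem:1}(3). The only cosmetic difference is that you decompose $\mcal S_\R$ and intersect with $\mcal L_\R$, whereas the paper first passes to the saturation $\mcal M=\mcal L_\R\cap\N^r$ and decomposes $\mcal M_\R=\mcal L_\R$ directly; your explicit lcm argument for the uniform period $p$ is the intended (and correct) reading of the final claim.
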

\begin{proof}
Denote $\mcal M=\mcal L_\R\cap\N^r$. By Lemma \ref{lem:1}(2), $R(X,\mu(\mcal M))$
is finitely generated, and by the proof of \cite[Theorem 4.1]{ELMNP}, there is a finite rational polyhedral subdivision
$\mcal M_\R=\bigcup\Delta_i$ such that for every geometric valuation $E$ on $X$, the map $\ord_E\|\cdot\|$ is $\Q$-additive on
$\Delta_i\cap\mcal M_\Q$ for every $i$. Since for every saturated rank $1$ submonoid $\mcal R\subset\mcal M$ the algebra $R(X,\mu(\mcal R))$ is
finitely generated by Lemma \ref{lem:1}(2), the map $\m_{\mcal R\cap\mcal L}$ is additive up to truncation by \cite[Lemma 2.3.53]{Cor07}
and thus there is the well-defined straightening $\m^\sharp\colon \mcal L_\Q\rightarrow\bMob(X)_\Q$ since $\mcal M_\Q=\mcal L_\Q$.
Then \eqref{eq:2} implies that the map $\m^\sharp|_{\Delta_i\cap\mcal L_\Q}$ is $\Q$-additive for every $i$, hence by Lemma \ref{concave}
the map $\m$ is piecewise additive up to truncation, and therefore $R(X,\mu(\mcal L))$ is finitely generated by Lemma \ref{lem:1}(3).
\end{proof}

The following lemma shows that finite generation implies certain boundedness on the convex geometry of boundaries.

\begin{lem}\label{bounded}
Let $(X,\Delta=B+A)$ be a log smooth klt pair, where $A$ is a general ample $\Q$-divisor, $B$ is an effective $\R$-divisor, and assume
that no component of $B$ is in $\B(K_X+\Delta)$. Assume Property $\mcal L_A^G$ and Theorem \ref{thm:main} in dimension $\dim X$.
Let $V\subset\Div(X)_\R$ be the vector space spanned by the components of $B$ and $W\subset V$ the smallest rational affine subspace
containing $B$. Then there is a constant $\eta>0$ and a positive integer $r$ such that if $\Phi\in W$ and
$k$ is a positive integer such that $\|\Phi-B\|<\eta$ and $k(K_X+\Phi+A)/r$ is Cartier, then no component of $B$ is in $\Fix|k(K_X+\Phi+A)|$.
\end{lem}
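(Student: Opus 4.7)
The plan is to reduce the problem to one finitely generated Cox ring attached to rational vertices near $B$ in $W$, and then extract uniform non-inclusion in the fixed part.

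First, I would construct a rational simplex $\mcal P\subset W$ with $B$ in its $W$-relative interior, contained in $\bigcap_G \mcal L_A^G$ where $G$ ranges over the prime components of $B$. By Property $\mcal L_A^G$, each $\mcal L_A^G$ is a rational polytope containing $B$. Since $W$ is the \emph{smallest} rational affine subspace containing $B$, the point $B$ must lie in the $W$-relative interior of each $\mcal L_A^G \cap W$: any proper face containing $B$ would be cut out by a rational affine hyperplane of $W$ through $B$, contradicting minimality of $W$. Hence the finite intersection $\bigcap_G(\mcal L_A^G \cap W)$ is a rational polytope containing a $W$-neighborhood of $B$, and I take $\mcal P$ to be a rational simplex inside it with affinely independent rational vertices $\Phi_1,\dots,\Phi_\ell$ and $B\in\relint_W\mcal P$.

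Second, I would pick $r$ large and divisible enough that $D_i:=r(K_X+\Phi_i+A)$ is Cartier with $|D_i|\neq\emptyset$ for each $i$, and such that $G\not\subset\Fix|D_i|$ for every component $G$ of $B$. The first two conditions are standard; the third follows because $\Phi_i\in\mcal L_A^G$ forces $\ord_G\|D_i\|=0$, and an argument in the style of the second half of Lemma \ref{lem:restrictedord} (Nadel vanishing combined with lifting from a multiplier ideal) removes $G$ from the actual fixed part once $r$ is large. Since $(X,\Phi_i+A)$ is log smooth log canonical, Theorem \ref{thm:main} applies and $R(X;D_1,\dots,D_\ell)$ is finitely generated. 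By Proposition \ref{pro:1}, after enlarging $r$ we may further assume the map $\bMob$ is piecewise additive on the monoid $\mcal L=\sum\N D_i$.

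Third, I would choose $\eta>0$ small enough that $\|\Phi-B\|<\eta$ forces $\Phi\in\mcal P$, and use the simplex structure to write $\Phi=\sum\mu_i\Phi_i$ uniquely with $\mu_i\geq0$, $\sum\mu_i=1$. For any $k$ with $k(K_X+\Phi+A)/r$ Cartier, the choice of $r$ and the affine independence of the $\Phi_i$'s as divisors will force each $k\mu_i/r$ to be a non-negative integer, so $k(K_X+\Phi+A)=\sum (k\mu_i/r) D_i\in\mcal L$. For any prime component $G$ of $B$, fixing a section $\tau_i^G\in H^0(X,D_i)$ with $\mult_G\ddiv\tau_i^G=0$, the product $\prod_i (\tau_i^G)^{k\mu_i/r}\in H^0(X,k(K_X+\Phi+A))$ does not vanish along $G$, so $G\not\subset\Fix|k(K_X+\Phi+A)|$. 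Alternatively, the piecewise additivity of $\bMob$ from Proposition \ref{pro:1} together with the subadditivity $\mult_G\Fix|s_1+s_2|\leq\mult_G\Fix|s_1|+\mult_G\Fix|s_2|$ gives the same conclusion from $\mult_G\Fix|D_i|=0$.

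I expect the main technical hurdle to be the integrality claim in the third step: that $k(K_X+\Phi+A)/r$ Cartier automatically entails $k\mu_i/r\in\N$ for each $i$. This has to be engineered via a careful choice of $r$ as a sufficiently divisible common denominator for the coefficients of $rK_X$, $rA$ and all $r\Phi_i$, combined with the fact that in a simplex with affinely independent rational vertices the barycentric coordinates are uniquely recoverable from the divisor data of $\Phi$.
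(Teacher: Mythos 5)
Your overall plan --- cut out a rational polytope around $B$ inside $\bigcap_G\mcal L_A^G\cap W$ (your observation that minimality of $W$ forces $B$ into the $W$-relative interior is correct and is exactly how the paper gets $\Delta\in\relint\mcal P$), feed the resulting adjoint divisors into Theorem \ref{thm:main}, and extract uniformity from Proposition \ref{pro:1} --- is the paper's strategy. But the reduction to a \emph{simplex} with vertices $\Phi_1,\dots,\Phi_\ell$ breaks down precisely at the step you flag, and it cannot be engineered away by a clever choice of $r$. Writing $E=k(K_X+\Phi+A)/r\in\Lambda$, what you need is that every integral divisor in the cone $\R_+(K_X+A+\mcal P)$ is an $\N$-combination of one chosen generator per extremal ray; this already fails for the cone in $\Z^2$ spanned by $(1,0)$ and $(1,2)$, where $(1,1)$ is an integral point of the cone but not an $\N$-combination of the generators (its barycentric coordinates are $1/2,1/2$), and replacing the generators by multiples $rD_i$ only shrinks the monoid they generate. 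By Gordan's lemma the monoid $\R_+(K_X+A+\mcal P)\cap\Lambda$ is finitely generated, but its Hilbert basis is in general strictly larger than the set of primitive ray generators. This is why the paper takes $D_1,\dots,D_\ell$ to be generators of the \emph{entire} saturated monoid $\R_+(K_X+A+\mcal P)\cap\Lambda$, applies Theorem \ref{thm:main} to that Cox ring, and then invokes Proposition \ref{pro:1} to get a cone $\mcal C\ni\Delta$ and an $r$ with $\bMob_{\iota|\mcal C\cap\Lambda^{(r)}}$ additive; by \eqref{eq:2} this additivity identifies $\mult_G\Fix|k(K_X+\Phi+A)|$ with $k\,\ord_G\|K_X+\Phi+A\|=0$. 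Your ``alternative'' appeal to Proposition \ref{pro:1} at the end of Step 3 is the right mechanism, but it must be run on the full Hilbert basis, at which point the simplex and barycentric coordinates become unnecessary.

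A secondary problem is your justification of $G\not\subset\Fix|D_i|$ in Step 2. The Nadel-vanishing argument of Lemma \ref{lem:restrictedord} needs ample positivity to spare: to conclude something about $\Fix|l(D+A)|$ it uses $\ord_G\|D+\frac12A\|=0$, so in your setting it would require $\ord_G\|K_X+\Phi_i+(1-\epsilon)A\|=0$ for some $\epsilon>0$, whereas $\Phi_i\in\mcal L_A^G$ only gives $G\not\subset\B(K_X+\Phi_i+A)$ with the full $A$ already consumed. For a single rational generator this is easily repaired without any vanishing theorem: by \cite[Lemma 3.5.3]{BCHM} there is $C\in|K_X+\Phi_i+A|_\Q$ with $G\not\subset\Supp C$, hence some multiple $m_i$ already satisfies $\mult_G\Fix|jm_i(K_X+\Phi_i+A)|=0$ for all $j$, and one takes $r$ divisible by all the $m_i$. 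Once that is done for every element of the Hilbert basis, your product-of-sections (equivalently, subadditivity of $\Fix$) argument does close the proof; but as written, with only the simplex vertices available, the decomposition $k(K_X+\Phi+A)=\sum(k\mu_i/r)D_i$ with non-negative integer exponents does not exist.
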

\begin{proof}
Let $K_X$ be a divisor such that $\OO_X(K_X)\simeq\omega_X$ and $\Supp A\not\subset\Supp K_X$, and let $\Lambda\subset\Div(X)$ be the monoid
spanned by components of $K_X,B$ and $A$.
Let $G$ be a components of $B$. By Property $\mcal L_A^G$ there is a rational polytope $\mcal P\subset W$ such that $\Delta\in\relint\mcal P$
and $G\not\subset\B(K_X+\Phi+A)$ for every $\Phi\in\mcal P$.
Let $D_1,\dots,D_\ell$ be generators of $\R_+(K_X+A+\mcal P)\cap\Lambda$.
By Theorem \ref{thm:main} the Cox ring $R(X;D_1,\dots,D_\ell)$ is finitely generated, and thus so is the algebra $R(X,\Lambda)$ by projection.
By Proposition \ref{pro:1} there is a rational polyhedral cone $\mcal C\subset\Lambda_\R$ such that $\Delta\in\mcal C$ and the map
$\bMob_{\iota|\mcal C\cap\Lambda^{(r)}}$ is additive for some positive integer $r$, where $\iota\colon\Lambda\rightarrow\Lambda$ is the identity map.
In particular, if $\Phi\in\mcal C\cap\mcal P$ and $k(K_X+\Phi+A)/r$ is Cartier, then $G\not\subset\Fix|k(K_X+\Phi+A)|$. Pick $\eta$ such that
$\Phi\in\mcal C\cap\mcal P$ whenever $\Phi\in W$ and $\|\Phi-\Delta\|<\eta$. We can take $\eta$ and $r$ to work for all components of $B$,
and we are done.
\end{proof}

To conclude this section, I show how results of \cite{BCHM} imply Property $\mcal L_A^G$. Of course, a hope is that this will be proved without
Mori theory.

\begin{pro}\label{pro:2}
Property $\mcal L_A^G$ follows from the MMP.
\end{pro}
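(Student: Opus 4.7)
The plan is to deduce Property $\mcal L_A^G$ from the rational polyhedral decomposition of ``Shokurov polytopes'' from BCHM, combined with the convexity of the set $\{D : G \not\subset \B(D)\}$. Since $A$ is ample, $K_X + \Phi + A$ is big for every $\Phi \in \mcal L_V$, so every such divisor admits a log terminal model after a short MMP.

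First I would invoke [BCHM, Theorem E] on the klt part of $\mcal L_V$: for the general ample $\Q$-divisor $A$ and the rational affine subspace $V$, there is a finite rational polyhedral decomposition
$$\mcal L_V = \mcal N_1 \cup \cdots \cup \mcal N_N$$
such that on each cell $\mcal N_i$ all pairs $(X, \Phi + A)$, $\Phi \in \mcal N_i$, admit a common log terminal model $Y_i$; in particular the set of prime divisors on $X$ contracted by the corresponding birational map is independent of $\Phi \in \mcal N_i$. The log canonical boundary, where some coefficient of $\Phi$ equals $1$, is handled by a standard small perturbation, replacing $\Phi$ by $\Phi - \varepsilon \lfloor \Phi \rfloor + \varepsilon A'$ for a small ample perturbation $A'$ of $A$. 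For $\Phi \in \mcal N_i$, the divisor $K_{Y_i} + \Phi_{Y_i} + A_{Y_i}$ is nef and big, hence semiample by the base-point-free theorem, so pulling back shows that $\B(K_X + \Phi + A)$ coincides with the exceptional locus of the birational map $X \to Y_i$. Consequently the condition ``$G \not\subset \B(K_X + \Phi + A)$'' is constant on each $\mcal N_i$, and setting $I = \{i : G \text{ is not contracted in } Y_i\}$ one obtains
$$\mcal L_A^G = \bigcup_{i \in I} \mcal N_i,$$
a finite union of rational polytopes.

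Finally, as observed in Section \ref{sec:2}, the containment $\B(D_1 + D_2) \subset \B(D_1) \cup \B(D_2)$ implies that the set $\{D \in \WDiv(X)_\R : G \not\subset \B(D)\}$ is convex, and hence so is $\mcal L_A^G$. A bounded convex set expressible as a finite union of rational polytopes is itself a rational polytope, since its extreme points are extreme in one of the $\mcal N_i$ and so constitute finitely many rational points whose convex hull recovers $\mcal L_A^G$. The main obstacle in this plan lies in the first step: producing the rational polyhedral decomposition with constant log terminal models requires the full strength of BCHM together with the technical perturbation needed to pass from the klt to the log canonical setting; once that decomposition is available, Steps $2$ and $3$ are essentially formal.
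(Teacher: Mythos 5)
Your route is genuinely different from the paper's. The paper deduces Property $\mcal L_A^G$ from [BCHM, Corollary 1.1.9] (finite generation of the adjoint Cox ring on the monoid spanned by $K_X+A+\mcal L_V$) together with Proposition \ref{pro:1}: finite generation forces $D\mapsto\bMob(D)$ to be rationally piecewise additive up to truncation, and the locus where the coefficient of $G$ in the fixed part vanishes is then cut out by finitely many rational linear conditions. You instead invoke finiteness of log terminal (ample) models, i.e.\ [BCHM, Theorem E / Corollary 1.1.5], and read off the divisorial part of the stable base locus from the set of divisors contracted by the model, which is constant on each chamber. Both deductions genuinely rest on BCHM; your observation that only the divisorial part of $\B(K_X+\Phi+A)$ matters, so that ``$G$ is contracted by the map to $Y_i$'' is equivalent to ``$G\subset\B(K_X+\Phi+A)$'', is correct, and your closing convexity argument parallels the one the paper uses at the end of its own proof.

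Two steps, however, need repair. First, your perturbation at the log canonical boundary replaces $K_X+\Phi+A$ by a divisor in a different $\Q$-linear equivalence class, namely $K_X+\Phi-\varepsilon\lfloor\Phi\rfloor+\varepsilon A'+A$; since $\varepsilon(\lfloor\Phi\rfloor-A')$ is neither effective nor anti-effective, neither inclusion between the two stable base loci is automatic, so this step yields no information about $\B(K_X+\Phi+A)$ itself. The clean fix is that [BCHM, Corollary 1.1.5] already covers the full log canonical locus for pairs with big boundary, so no perturbation is needed. Second, the chambers on which the model is constant are not closed: they partition the pseudoeffective locus and only their \emph{closures} are rational polytopes, and the condition ``$G$ is contracted'' cannot be constant on overlapping closed cells. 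Hence ``$\mcal L_A^G=\bigcup_{i\in I}\mcal N_i$ is a finite union of rational polytopes'' is not yet justified; in particular the closedness of $\mcal L_A^G$, which your extreme-point argument requires, is part of what must be proved rather than an input. You need an extra argument (as in the last lines of the paper's proof) showing, via convexity and density of rational points, that passing to closures of the chambers indexed by $I$ does not leave $\mcal L_A^G$. Finally, a minor point: $K_X+\Phi+A$ need not be big, or even pseudoeffective, for $\Phi\in\mcal L_V$; one should first discard the non-pseudoeffective locus, where $\B(K_X+\Phi+A)=X$ by convention, and use that a log terminal model exists and is good because the boundary contains the ample divisor $A$, not because $K_X+\Phi+A$ is big.
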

\begin{proof}
Let $K_X$ be a divisor such that $\OO_X(K_X)\simeq\omega_X$ and $\Supp A\not\subset\Supp K_X$, and let $\Lambda$ be the monoid in $\Div(X)$ generated
by the components of $K_X,B$ and $A$. Let $\iota\colon\Lambda\rightarrow\Lambda$ be the identity map, and denote
$\mcal S=\R_+(K_X+A+\mcal L_V)\cap\Lambda$. Since $\mcal L_V$ is a rational polytope, $\mcal S$ is a
finitely generated monoid and let $D_i$ be
generators of $\mcal S$. By \cite[Corollary 1.1.9]{BCHM}, the Cox ring $R(X;D_1,\dots,D_k)$ is finitely generated, thus so is the algebra
$R(X,\mcal S)$ by projection. The set $\mcal M=\{D\in\mcal S:|D|_\Q\neq\emptyset\}$ is a convex cone, and therefore finitely
generated since $R(X,\mcal S)$ is finitely generated, so I can assume $\mcal M=\mcal S$.
By Proposition \ref{pro:1}, the map $\bMob_\iota$ is piecewise additive up to truncation,
which proves that the closure $\mcal C$ of the set $(\mcal L_A^G)_\Q$ is a rational polytope, and I claim it equals $\mcal L_A^G$. Otherwise there exists
$\Phi\in\mcal L_A^G\backslash\mcal C$, and therefore the convex hull of the set $\mcal C\cup\{\Phi\}$, which is by convexity a subset
of $\mcal L_A^G$, contains a rational point $\Phi'\in\mcal L_A^G\backslash\mcal C$, a contradiction.
\end{proof}

\section{Diophantine approximation}

I need a few results from Diophantine approximation theory.

\begin{lem}\label{diophant2}
Let $\Lambda\subset\R^n$ be a lattice spanned by rational vectors, and let $V=\Lambda\otimes_\Z\R$. Fix a vector $v\in V$ and denote $X=\N v+\Lambda$.
Then the closure of $X$ is symmetric with respect to the origin. Moreover,
if $\pi\colon V\rightarrow V/\Lambda$ is the quotient map, then the closure of $\pi(X)$ is a finite disjoint union of connected components.
If $v$ is not contained in any proper rational affine subspace of $V$, then $X$ is dense in $V$.
\end{lem}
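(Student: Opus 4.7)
The plan is to reduce all three claims to analyzing the closure of $\pi(X)$ in the compact torus $V/\Lambda$. Since $X=\N v+\Lambda$, we have $\pi(X)=\{n\pi(v):n\in\N\}$, a sub\emph{semi}group of $V/\Lambda$. The crucial first step is to show that its closure is in fact a \emph{subgroup}, for which I would invoke the classical fact that every closed subsemigroup $S$ of a compact Hausdorff group $G$ is a subgroup: for $g\in S$, compactness supplies indices $n_1<n_2<\dots$ with $g^{n_k}$ Cauchy; then $g^{n_{k+1}-n_k}\to e$ with $n_{k+1}-n_k\geq 1$, and continuity of inversion forces $g^{-1}\in S$. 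This is the only step where any real idea is needed, and it is the main obstacle in the sense that one must recognize an a priori semigroup statement as a group statement.

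Granted this, set $H=\pi^{-1}(\overline{\pi(X)})$, a closed subgroup of $V$ containing $\Lambda$. I claim $\overline{X}=H$. The inclusion $\overline{X}\subseteq H$ is automatic since $H$ is closed and contains $X$. For the reverse, given $y\in H$, take $x_n\in X$ with $\pi(x_n)\to\pi(y)$; since $X$ is $\Lambda$-invariant and $\pi\colon V\to V/\Lambda$ is a covering map, after replacing each $x_n$ by $x_n+\lambda_n$ for a suitable $\lambda_n\in\Lambda$ we may assume $x_n\to y$ in $V$, so $y\in\overline{X}$. Hence $\overline{X}=H$ is a subgroup of $V$, in particular symmetric about the origin, which settles the first claim.

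For the second claim, $H/\Lambda$ is a closed subgroup of the compact Lie group $V/\Lambda$, hence itself a compact Lie group by Cartan's theorem; any compact Lie group has only finitely many connected components, so $\overline{\pi(X)}=H/\Lambda$ is a finite disjoint union of connected components.

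For the density claim, suppose $H\neq V$, so $H/\Lambda$ is a proper closed subgroup of $V/\Lambda$. Any such subgroup is contained in the kernel of some nontrivial continuous character $V/\Lambda\to\R/\Z$; lifted to $V$, this character is given by a nonzero element $\chi$ of the dual lattice $\Lambda^\vee=\{\ell\in V^*:\ell(\Lambda)\subseteq\Z\}$, and $\chi(H)\subseteq\Z$. In particular $c:=\chi(v)\in\Z$, so $v$ lies in the proper rational affine hyperplane $\{w\in V:\chi(w)=c\}$, contradicting the hypothesis that $v$ is in no proper rational affine subspace. Hence $H=V$ and $X$ is dense in $V$.
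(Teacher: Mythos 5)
Your proof is correct, and its overall architecture coincides with the paper's: pass to the compact torus $V/\Lambda$, observe that the closure of $\pi(X)$ is a closed subgroup $G$, and get finiteness of the set of connected components from the compact Lie group structure. You are in fact more complete than the paper on two points it leaves implicit: you prove that the closure of the cyclic subsemigroup generated by $\pi(v)$ is actually a \emph{group} (the paper simply asserts that $G$ is a subgroup), and you justify the identity $\overline{X}=\pi^{-1}(\overline{\pi(X)})$ via the covering-space lifting argument, which is what makes the symmetry of $\overline{X}$ "straightforward". The one place where you take a genuinely different route is the density claim: the paper identifies the identity component $G_0$ as a rational subtorus $V_0/\Lambda_0$ and uses finiteness of $G/G_0$ to conclude that a rational translate of $v$ lies in the proper rational subspace $V_0$, whereas you dualize, producing a nonzero integral character $\chi\in\Lambda^\vee$ annihilating $\overline{\pi(X)}$ and hence a proper rational affine hyperplane $\{\chi=\chi(v)\}$ containing $v$. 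These are Pontryagin-dual formulations of the same Kronecker-type statement; yours has the mild advantage of avoiding the explicit structure theorem for $G_0$ (you only need that a proper closed subgroup of a torus is killed by a nontrivial character), while the paper's version makes the rational subspace $V_0$ explicit, which is closer in spirit to how the lemma is applied later (cf.\ Remark \ref{rem:2}).
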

\begin{proof}
Let $G$ be the closure of $\pi(X)$. Then $G$ is a closed infinite subgroup of the compact group $V/\Lambda$.
The connected component $G_0$ of the identity in $G$ is a Lie subgroup of $V/\Lambda$ and so by \cite[Theorem 15.2]{Bum04},
$G_0$ is a torus. Thus $G_0=V_0/\Lambda_0$, where $V_0=\Lambda_0\otimes_\Z\R$ is a rational subspace of $V$.
Since $G/G_0$ is discrete and compact, it is finite, and it is straightforward that $X$ is symmetric with respect to the origin.
Therefore a translate of $v$ by a rational vector is contained in $V_0$,
and so if $v$ is not contained in any proper rational affine subspace of $V$, then $V_0=V$.
\end{proof}

The next result is \cite[Lemma 3.7.7]{BCHM}.

\begin{lem}\label{lem:surround}
Let $x\in\R^n$ and let $W$ be the smallest rational affine space containing $x$. Fix a positive integer $k$ and a positive real number $\varepsilon$.
Then there are $w_1,\dots,w_p\in W\cap\Q^n$ and positive integers $k_1,\dots,k_p$ divisible by $k$, such that $x=\sum_{i=1}^p r_iw_i$ with $r_i>0$
and $\sum r_i=1$, $\|x-w_i\|<\varepsilon/k_i$ and $k_iw_i/k$ is integral for every $i$.
\end{lem}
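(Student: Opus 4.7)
The plan is to use the density statement in Lemma \ref{diophant2} to manufacture $d+1$ rational approximations of $x$ (where $d=\dim W$), each in a prescribed direction relative to $x$, so that they form a simplex containing $x$ in its interior and each of whose vertices lies within the allowed distance $\varepsilon/k_i$ of $x$. The case $d=0$ is trivial since then $x\in\Q^n$ already, so I assume $d\geq1$.

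Fix $y_0\in W\cap\Q^n$, set $v=x-y_0$ and let $V\subset\R^n$ be the rational vector subspace parallel to $W$; then $v\in V$. The hypothesis that $W$ is the smallest rational affine subspace containing $x$ is exactly the statement that $v$ lies in no proper rational affine subspace of $V$, so by Lemma \ref{diophant2} the set $\N v+(V\cap\Z^n)$ is dense in $V$. Applying the same lemma to $qv$ for any positive integer $q$ (which also avoids every proper rational subspace of $V$) yields the key ``directional'' form I will use: for every $u\in V$, every $\delta>0$ and every positive integer $q$, there are arbitrarily large $n\in q\N$ and $\lambda\in V\cap\Z^n$ with $\|nv-u-\lambda\|<\delta$. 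Setting $w=y_0+\lambda/n$ then gives $w\in W\cap\Q^n$ with $nw\in\Z^n$ and
\[
w-x \;=\; \lambda/n-v \;=\; -(u+\eta)/n,\qquad \|\eta\|<\delta,
\]
that is, a rational approximation of $x$ in $W$ whose error is almost in the prescribed direction $-u/n$.

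Next I would choose vectors $u_0,\dots,u_d\in V$ with $\sum u_i=0$, with any $d$ of them linearly independent in $V$ (so $0$ lies in the interior of $\mathrm{conv}\{u_i\}$), and with all $\|u_i\|<\varepsilon/(4k)$; such $u_i$ are obtained by scaling a rational basis of $V$ down. Let $q$ be a positive integer such that $qy_0\in\Z^n$, and fix $\delta<\varepsilon/(4k)$, small enough for the perturbation argument below. For each $i=0,\dots,d$, apply the directional density to find $n_i\in q\N$ and $\lambda_i\in V\cap\Z^n$ with $\|n_iv-u_i-\lambda_i\|<\delta$, and set $k_i=kn_i$ and $w_i=y_0+\lambda_i/n_i$. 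By construction each $k_i$ is divisible by $k$, $k_iw_i/k=n_iy_0+\lambda_i\in\Z^n$, and $\|w_i-x\|\leq(\|u_i\|+\delta)/n_i<\varepsilon/k_i$.

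It remains to exhibit positive $r_i$ with $\sum r_i=1$ and $\sum r_iw_i=x$, that is $\sum r_i(w_i-x)=0$; substituting, this becomes $\sum\alpha_iu_i=-\sum\alpha_i\eta_i$ with $\alpha_i=r_i/n_i$. When $\delta=0$ the obvious solution is $\alpha_i=\tau$ constant, since $\sum u_i=0$; for $\delta$ small, a perturbation argument based on the fact that the linear map $\{\tilde\alpha\in\R^{d+1}:\sum\tilde\alpha_i=0\}\to V$, $\tilde\alpha\mapsto\sum\tilde\alpha_i u_i$, is an isomorphism (because any $d$ of the $u_i$ form a basis of $V$) produces a genuine solution with all $\alpha_i>0$, and then $\tau$ is fixed by the normalization $\sum r_i=1$. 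I expect the main technical obstacle to be the bookkeeping in this perturbation step: one must choose $\delta$ small relative to $\|u_i\|$ to keep all $r_i$ strictly positive while simultaneously keeping the distance estimate $\|w_i-x\|<\varepsilon/k_i$, and this balance forces the two small parameters to be tied together. All the other steps are essentially formal consequences of Lemma \ref{diophant2}.
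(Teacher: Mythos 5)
Your argument is correct, and it is worth noting that the paper itself does not prove this statement at all: it is imported verbatim as \cite[Lemma 3.7.7]{BCHM}, where it is deduced from a Dirichlet-type simultaneous approximation lemma by induction on $\dim W$. You instead derive it directly from the paper's own Lemma \ref{diophant2}, using the density of $\N(qv)+(V\cap\Z^n)$ to produce $d+1$ approximants $w_i$ whose error vectors $x-w_i=(u_i+\eta_i)/n_i$ point in prescribed directions forming a nondegenerate simplex around the origin; this is a legitimate and, in the context of this paper, rather natural alternative since Lemma \ref{diophant2} is already on the table. The skeleton is sound: the identification of ``$W$ minimal rational containing $x$'' with ``$v=x-y_0$ in no proper rational affine subspace of $V$'' is correct, the integrality $k_iw_i/k=n_iy_0+\lambda_i\in\Z^n$ works because $n_i\in q\N$ with $qy_0$ integral, and the distance bound $\|x-w_i\|\le(\|u_i\|+\delta)/n_i<\varepsilon/(kn_i)=\varepsilon/k_i$ closes. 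Two points should be made explicit in a final write-up. First, when extracting $n\in q\N$ you must exclude $n=0$: this follows because the closure of $\{\pi(mqv):m\ge1\}$ is a closed sub-semigroup, hence a closed subgroup, of the torus $V/(V\cap\Z^n)$ containing $\pi(qv)$, so it is still everything. Second, the perturbation step has no circularity only because the admissible $\delta$ depends solely on the fixed simplex $\{u_0,\dots,u_d\}$ (say, on the distance from $0$ to its boundary, or on the norm of the inverse of the map $\tilde\alpha\mapsto\sum\tilde\alpha_iu_i$ on $\{\sum\tilde\alpha_i=0\}$) and not on the $n_i$, which are chosen afterwards; your observation that the dependence space of the $u_i$ is exactly $\R(1,\dots,1)$ is what makes that map an isomorphism and the $u_i$ affinely independent, so $0$ remains interior to the perturbed simplex and all $r_i=\alpha_in_i/\sum_j\alpha_jn_j$ come out strictly positive.
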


I will need a refinement of this lemma when the smallest rational affine space containing a point is not necessarily of maximal dimension.

\begin{lem}\label{lem:approximation}
Let $x\in\R^n$, let $0<\varepsilon,\eta\ll1$ be rational numbers and let $w_1\in\Q^n$ and $k_1\in\N$ be such that $\|x-w_1\|<\varepsilon/k_1$ and
$k_1w_1$ is integral. Then there are $w_2,\dots,w_m\in\Q^n$, positive integers $k_2,\dots,k_m$ such that
$\|x-w_i\|<\varepsilon/k_i$ and $k_iw_i$ is integral for every $i$, and positive numbers $r_1,\dots,r_m$ such that $x=\sum_{i=1}^m r_iw_i$
and $\sum r_i=1$. Furthermore, we can assume that $w_3,\dots,w_m$ belong to the smallest rational affine space containing $x$, and we can write
$$x=\frac{k_1}{k_1+k_2}w_1+\frac{k_2}{k_1+k_2}w_2+\xi,$$
with $\|\xi\|<\eta/(k_1+k_2)$.
\end{lem}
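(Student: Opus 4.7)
The plan is to build $w_2$ by a Diophantine approximation to a multiple of $x$, then invoke Lemma \ref{lem:surround} inside the smallest rational affine space $W$ containing $x$ to produce $w_3,\dots,w_m$, and finally perturb the resulting affine combination slightly in order to absorb the residual error $\xi$ while keeping every coefficient positive.

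Write $W=x_0+W_0$ with $x_0\in W\cap\Q^n$ and $W_0\subset\R^n$ a rational linear subspace; set $y_0=x-x_0\in W_0$. By minimality of $W$, the vector $y_0$ lies in no proper rational affine subspace of $W_0$. Pick $d\in\N$ with $dx_0\in\Z^n$; then $dy_0$ also lies in no proper rational affine subspace of $W_0$, so Lemma \ref{diophant2} applied to the lattice $W_0\cap\Z^n$ inside $W_0$ yields that $\N(dy_0)+(W_0\cap\Z^n)$ is dense in $W_0$. Set $\eta':=\min\!\bigl(\eta,\,\varepsilon-k_1\|x-w_1\|\bigr)>0$ and choose $M\in\N$ with $N:=Md>k_1$ together with $\lambda\in W_0\cap\Z^n$ so that $\|Ny_0-\lambda\|<\eta'$; put $k_2:=N-k_1$ and $q:=Nx_0+\lambda\in\Z^n$. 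Then $\|Nx-q\|<\eta'$ and $Nx-q\in W_0$. Define $w_2:=(q-k_1w_1)/k_2\in\Q^n$; we have $k_2w_2=q-k_1w_1\in\Z^n$, the identity $k_2(x-w_2)=(Nx-q)-k_1(x-w_1)$ gives $\|x-w_2\|<\varepsilon/k_2$, and $\xi:=x-\tfrac{k_1}{k_1+k_2}w_1-\tfrac{k_2}{k_1+k_2}w_2$ satisfies $(k_1+k_2)\xi=Nx-q\in W_0$, so $\xi\in W_0$ with $\|\xi\|<\eta/(k_1+k_2)$.

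Now apply Lemma \ref{lem:surround} to $x$ inside $W$ with accuracy $\varepsilon/2$, producing $v_1,\dots,v_p\in W\cap\Q^n$, positive integers $m_1,\dots,m_p$, and $s_i>0$ with $\sum s_i=1$ such that $x=\sum s_iv_i$, $\|x-v_i\|<\varepsilon/(2m_i)$, and $m_iv_i\in\Z^n$. The affine hull of $\{v_i\}$ is a rational affine space containing $x$ and therefore equals $W$, so $\{v_i-x_0\}$ spans $W_0$. Because $\xi\in W_0$, there exist $\tau_i\in\R$ with $\sum\tau_i=0$ and $\sum\tau_iv_i=\xi$. Choose $\alpha\in(0,1)$ small enough that $c:=\alpha/(1-\alpha)$ satisfies $t_i:=s_i+c\tau_i>0$ for every $i$; this is possible since $s_i>0$ and the $\tau_i$ are fixed. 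Then $\sum t_i=1$, $\sum t_iv_i=x+c\xi$, and writing $y:=\tfrac{k_1w_1+k_2w_2}{k_1+k_2}=x-\xi$,
\[
\alpha y+(1-\alpha)\sum_i t_iv_i=\alpha(x-\xi)+(1-\alpha)(x+c\xi)=x.
\]
Relabelling $w_{i+2}:=v_i$, $k_{i+2}:=m_i$ for $1\le i\le p$, and setting $r_1:=\tfrac{\alpha k_1}{k_1+k_2}$, $r_2:=\tfrac{\alpha k_2}{k_1+k_2}$, $r_{i+2}:=(1-\alpha)t_i$ yields the required positive convex decomposition, with $w_3,\dots,w_m\in W\cap\Q^n$, $k_jw_j\in\Z^n$, and $\|x-w_j\|<\varepsilon/k_j$ for all $j$.

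The main obstacle is the Diophantine step: one must simultaneously force $q$ into the coset $Nx_0+(W_0\cap\Z^n)$, which guarantees $\xi\in W_0$ (without this the final perturbation would be unsolvable, since $\sum\tau_iv_i\in W_0$ for any choice of $\tau_i$ with $\sum\tau_i=0$), and at the same time control the error by $\eta'=\min(\eta,\varepsilon-k_1\|x-w_1\|)$ so as to preserve $\|x-w_2\|<\varepsilon/k_2$, a bound which is tight when $w_1$ is nearly extremal. The density furnished by Lemma \ref{diophant2}, applied inside $W_0$ to a multiple of $y_0$, supplies exactly the required refined approximation once $N$ is restricted to lie in $d\N$.
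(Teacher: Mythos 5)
Your proof is correct, and its Diophantine core is the same as the paper's: you use Lemma \ref{diophant2} to find $N=k_1+k_2$ and an integral point $q$ with $Nx-q$ small and lying in the direction space $W_0$ of the minimal rational affine space $W$, and then set $w_2=(q-k_1w_1)/k_2$; this is exactly the paper's $q=y+k_1x$ with $y\equiv -k_1x\pmod{\Z^n}$, except that by translating to $W_0$ by a rational base point $x_0$ you only need the density statement of Lemma \ref{diophant2}, not the symmetry of the closure or the description of its connected components (cf.\ Remark \ref{rem:2}). Where you genuinely diverge is the assembly of the positive convex combination: the paper picks a (possibly irrational) point $w_t$ on the segment $(w_1,w_2)\cap W$ and invokes a mild extension of Lemma \ref{lem:surround} that keeps $w_t$ as one of the nodes, whereas you apply Lemma \ref{lem:surround} to $x$ verbatim and then absorb the residual $\xi\in W_0$ by perturbing the barycentric coefficients with $\tau_i$ summing to zero (legitimate precisely because the $v_i$ affinely span $W$, so $\{\sum\tau_iv_i:\sum\tau_i=0\}=W_0$). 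Your version is slightly more self-contained on that step, and you are also more careful than the paper on a point it glosses over: replacing $\eta$ by $\eta'=\min(\eta,\varepsilon-k_1\|x-w_1\|)$ is indeed needed to guarantee $\|x-w_2\|<\varepsilon/k_2$ from $\|k_2(x-w_2)\|\leq\|Nx-q\|+k_1\|x-w_1\|$. The only step worth flagging is your choice of $M$ with $N=Md>k_1$: you should note that density of $\pi(\N dy_0)$ in $W_0/(W_0\cap\Z^n)$ together with the fact that a dense subset of a torus with no isolated points remains dense after deleting finitely many terms gives arbitrarily large admissible $M$ (and in the degenerate case $W_0=0$, i.e.\ $x\in\Q^n$, any $N\in d\N$ with $\lambda=0$ works). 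With that remark the argument is complete.
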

\begin{proof}
Let $W$ be the minimal rational affine subspace containing $x$, let $\pi\colon\R^n\rightarrow\R^n/\Z^n$ be the quotient map
and let $G$ be the closure of the set $\pi(\N x+\Z^n)$. Then by Lemma \ref{diophant2} we have $\pi(-k_1x)\in G$ and
there is $k_2\in\N$ such that $\pi(k_2x)$ is in the connected component of $\pi(-k_1x)$ in $G$ and
$\|k_2x-y\|<\eta$ for some $y\in\R^n$ with $\pi(y)=\pi(-k_1x)$. Thus there is a point $w_2\in\Q^n$ such that
$k_2w_2\in\Z^n$, $\|k_2x-k_2w_2\|<\varepsilon$ and the open segment $(w_1,w_2)$ intersects $W$.

Pick $t\in(0,1)$ such that $w_t=tw_1+(1-t)w_2\in W$, and choose, by Lemma \ref{lem:surround}, rational points $w_3,\dots,w_m\in W$ and positive
integers $k_3,\dots,k_m$ such that $k_iw_i\in\Z^n$, $\|x-w_i\|<\varepsilon/k_i$ and $x=\sum_{i=3}^mr_iw_i+r_tw_t$ with $r_t>0$ and all $r_i>0$,
and $r_t+\sum_{i=3}^m r_i=1$. Thus $x=\sum_{i=1}^mr_iw_i$ with $r_1=tr_t$ and $r_2=(1-t)r_t$.

Finally, observe that the vector $y/k_2-w_2$ is parallel to the vector $x-w_1$ and $\|y-k_2w_2\|=\|k_1x-k_1w_1\|$. Denote $z=x-y/k_2$. Then
$$\frac{x-w_1}{(w_2+z)-x}=\frac{x-w_1}{w_2-y/k_2}=\frac{k_2}{k_1},$$
so
$$x=\frac{k_1}{k_1+k_2}w_1+\frac{k_2}{k_1+k_2}(w_2+z)=\frac{k_1}{k_1+k_2}w_1+\frac{k_2}{k_1+k_2}w_2+\xi,$$
where $\|\xi\|=\|k_2z/(k_1+k_2)\|<\eta/(k_1+k_2)$.
\end{proof}

\begin{rem}\label{rem:2}
Assuming notation from the previous proof, the connected components of $G$ are precisely the connected components of the set $\pi(\bigcup_{k>0}kW)$.
Therefore $y/k_2\in W$.
\end{rem}

\begin{rem}\label{rem:3}
Assume $\lambda\colon V\rightarrow W$ is a linear map between vector spaces such that $\lambda(V_\Q)\subset W_\Q$.
Let $x\in V$ and let $H\subset V$ be the smallest rational affine subspace
containing $x$. Then $\lambda(H)$ is the smallest rational affine subspace of $W$ containing $\lambda(x)$. Otherwise, assume $H'\neq\lambda(H)$ is
the smallest rational affine subspace containing $\lambda(x)$. Then $\lambda^{-1}(H')$ is a rational affine subspace containing $x$ and
$H\not\subset\lambda^{-1}(H')$, a contradiction.
\end{rem}

\section{Restricting plt algebras}\label{plt}

In this section I establish one of the technically most difficult steps in the proof of Theorem \ref{thm:main}. Crucial results
and techniques will be those used to prove Non-vanishing theorem in \cite{Hac08} using methods developed in \cite{HM08}, and the techniques
of \cite[Section 3]{Laz07}.

The key result is the following Hacon-M\textsuperscript{c}Kernan extension theorem \cite[Theorem 6.2]{HM08}, whose proof relies
on deep techniques initiated by \cite{Siu98}.

\begin{thm}\label{thm:hmck}
Let $\pi\colon X\rightarrow Z$ be a projective morphism to a normal affine variety $Z$, where $(X,\Delta=S+A+B)$ is a purely log terminal pair,
$S=\lfloor\Delta\rfloor$ is irreducible, $(X,S)$ is log smooth, $A$ is a general ample $\Q$-divisor and $(S,\Omega+A_{|S})$ is canonical,
where $\Omega=(\Delta-S)_{|S}$. Assume $S\not\subset\B(K_X+\Delta)$, and let
$$F=\liminf_{m\rightarrow\infty}\textstyle\frac1m\Fix|m(K_X+\Delta)|_S.$$
If $\varepsilon>0$ is any rational number such that $\varepsilon(K_X+\Delta)+A$ is ample and if $\Phi$ is any $\Q$-divisor on $S$
and $k>0$ is any integer such that both $k\Delta$ and $k\Phi$ are Cartier, and $\Omega\wedge(1-\frac{\varepsilon}{k})F\leq\Phi\leq\Omega$, then
$$|k(K_S+\Omega-\Phi)|+k\Phi\subset|k(K_X+\Delta)|_S.$$
\end{thm}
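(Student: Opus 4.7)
The plan is to solve the lifting problem by applying Nadel vanishing to a carefully chosen asymptotic multiplier ideal sheaf, following the Siu-type extension scheme pioneered in \cite{Siu98}. Given $\sigma\in H^0(S,k(K_S+\Omega-\Phi))$ and the canonical section $s_{k\Phi}$ cutting out $k\Phi$, the product $\sigma\cdot s_{k\Phi}\in H^0(S,k(K_S+\Omega))$ is, via adjunction $(K_X+\Delta)|_S=K_S+\Omega$, exactly the type of section we must lift to $H^0(X,k(K_X+\Delta))$. The conclusion is equivalent to the surjectivity of the restriction map $H^0(X,k(K_X+\Delta))\to H^0(S,k(K_S+\Omega))$ onto the subspace $H^0(S,k(K_S+\Omega-\Phi))\cdot s_{k\Phi}$.

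The central tool is the twisted short exact sequence
\[
0\to\OO_X(k(K_X+\Delta)-S)\otimes\mcal{J}\to\OO_X(k(K_X+\Delta))\otimes\mcal{J}\to\OO_S(k(K_S+\Omega))\otimes(\mcal{J}\cdot\OO_S)\to 0
\]
for a multiplier ideal $\mcal{J}$ on $X$ to be constructed. The scheme is to choose $\mcal{J}$ so that (i) $H^1(X,\OO_X(k(K_X+\Delta)-S)\otimes\mcal{J})=0$ by Nadel vanishing and (ii) $\mcal{J}\cdot\OO_S$ is ``large enough'' to contain $\sigma\cdot s_{k\Phi}$. For (i), the identity
$k(K_X+\Delta)-S-K_X=(k-1)(K_X+\Delta)+A+B$
shows that $k(K_X+\Delta)-S-K_X$ is $\Q$-linearly equivalent to the sum of the ample divisor $A$ and an effective $\Q$-divisor — precisely the input needed for Nadel vanishing, with the ample slack $\varepsilon(K_X+\Delta)+A$ absorbing the error terms introduced when writing $(1-\varepsilon/k)$-multiples.

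For the construction of $\mcal{J}$, I would pick $m\gg 0$ and $D_m\in|m(K_X+\Delta)|$ with $S\not\subset\Supp D_m$ (possible since $S\not\subset\B(K_X+\Delta)$), and set $\mcal{J}=\mcal{J}\!\bigl(X,(1-\varepsilon/k)\tfrac{1}{m}D_m\bigr)$, perturbed by a small ample fraction of $A$. As $m\to\infty$, the restriction $\tfrac{1}{m}D_m|_S$ accumulates along $\Omega+F$, so that asymptotically the divisor controlling $\mcal{J}\cdot\OO_S$ approaches $\Omega+(1-\varepsilon/k)F$. The hypothesis $\Omega\wedge(1-\varepsilon/k)F\le\Phi\le\Omega$ is engineered so that subtracting $\Phi$ from $\Omega$ on the $S$-side leaves exactly enough room for the restricted multiplier ideal to accommodate $\sigma\cdot s_{k\Phi}$, guaranteeing property (ii).

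The main obstacle — and the reason the result is deep — is the comparison between $\mcal{J}\cdot\OO_S$ and the multiplier ideal computed intrinsically on $S$: one needs a subadjunction/Ohsawa–Takegoshi–type inequality to conclude that the restricted ideal is no smaller than $\mcal{J}_S(S,\Omega+(1-\varepsilon/k)\tfrac{1}{m}D_m|_S-\Omega)$, which is where the assumption that $(S,\Omega+A_{|S})$ is \emph{canonical} enters crucially: it prevents the multiplier ideal on $S$ from picking up unwanted vanishing along components of $\Omega$, forcing the restricted ideal to be trivial at the locus where $\sigma\cdot s_{k\Phi}$ lives. Making this comparison quantitative, and iterating it as $m\to\infty$ to pass from the approximation by $D_m$ to the honest $\liminf$ $F$, is the technical heart of the argument, and is where the analytic methods of \cite{Siu98,HM08} are indispensable.
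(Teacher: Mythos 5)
This statement is not proved in the paper at all: it is quoted verbatim as \cite[Theorem 6.2]{HM08} and used as a black box, so there is no in-paper argument to compare yours against. Judged as a proof on its own terms, your proposal is a strategy outline rather than a proof, and the gap lies exactly where you defer: ``making this comparison quantitative, and iterating it as $m\to\infty$'' \emph{is} the theorem. Concretely, two essential steps are missing. First, the comparison between $\mcal J\cdot\OO_S$ and an intrinsic multiplier ideal on $S$: tensoring the structure sequence of $S$ with a multiplier ideal $\mcal J$ does not in general stay exact, nor does it identify the cokernel with $\OO_S(k(K_S+\Omega))\otimes(\mcal J\cdot\OO_S)$; the correct mechanism is the adjoint-ideal exact sequence and the restriction theorem for (asymptotic) multiplier ideals, i.e.\ the machinery of \cite[Section 4--5]{HM08} (\(\mcal J_{S,\Delta}\), \cite[Lemma 4.4(3)]{HM08}), together with the hypothesis that $(S,\Omega+A_{|S})$ is canonical and $A$ is general to force the ideal downstairs to be trivial along the support of the section being lifted. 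You name this obstacle but do not resolve it. Second, the passage from a single $D_m$ to the asymptotic quantity $F=\liminf_m\frac1m\Fix|m(K_X+\Delta)|_S$ is not a limit of a fixed estimate: in \cite{HM08} it is an induction on $m$ in which one lifts sections of $m(K_X+\Delta)$ for increasing $m$, tracks how much of $\Omega$ is forced into the fixed part at each stage, and shows that the loss is bounded by $\Omega\wedge(1-\frac{\varepsilon}{k})F$, with the ample slack $\varepsilon(K_X+\Delta)+A$ consumed precisely to absorb the error at each step. Your sketch asserts that the hypothesis ``is engineered so that'' everything works, but that engineering is the content of the theorem.

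A smaller point: your Nadel-vanishing input $k(K_X+\Delta)-S-K_X\sim_\Q(k-1)(K_X+\Delta)+A+B$ is correct as an identity, but ``ample plus effective'' is not by itself a valid hypothesis for Nadel vanishing --- the effective part $B$ and the $\Q$-effective part $(k-1)(K_X+\Delta)$ must be incorporated into the multiplier ideal (or one must verify that the pair one twists by is klt away from the ideal), and this interacts with the choice of $\mcal J$ in a way that has to be checked, not just asserted. In short: the architecture you describe is the right one (it is Siu--Hacon--M\textsuperscript{c}Kernan extension), but none of the load-bearing steps are carried out, so as a proof of Theorem \ref{thm:hmck} the proposal is incomplete; the appropriate course here, as in the paper, is to cite \cite[Theorem 6.2]{HM08}.
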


The immediate consequence is:

\begin{cor}\label{cor:hmck}
Let $\pi\colon X\rightarrow Z$ be a projective morphism to a normal affine variety $Z$, where $(X,\Delta=S+A+B)$ is a purely log terminal pair,
$S=\lfloor\Delta\rfloor$ is irreducible, $(X,S)$ is log smooth, $A$ is a general ample $\Q$-divisor and $(S,\Omega+A_{|S})$ is canonical,
where $\Omega=(\Delta-S)_{|S}$. Assume $S\not\subset\B(K_X+\Delta)$, and let $\Phi_m=\Omega\wedge\frac1m\Fix|m(K_X+\Delta)|_S$ for every $m$
such that $m\Delta$ is Cartier. Then
$$|m(K_S+\Omega-\Phi_m)|+m\Phi_m=|m(K_X+\Delta)|_S.$$
\end{cor}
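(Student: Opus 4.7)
The plan is to prove the two inclusions separately. The nontrivial direction $|m(K_S+\Omega-\Phi_m)|+m\Phi_m\subset|m(K_X+\Delta)|_S$ will come from Theorem \ref{thm:hmck} applied with $k=m$ and $\Phi=\Phi_m$ once its hypotheses are verified, while the reverse inclusion is a direct bookkeeping argument using adjunction.

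For the reverse inclusion, pick $D\in|m(K_X+\Delta)|_S$. By the definition of $\wedge$, $m\Phi_m=m\Omega\wedge\Fix|m(K_X+\Delta)|_S\leq\Fix|m(K_X+\Delta)|_S\leq D$, so $D-m\Phi_m\geq0$. Adjunction gives $(K_X+\Delta)_{|S}=K_S+\Omega$, whence $D-m\Phi_m\sim m(K_S+\Omega-\Phi_m)$ and thus $D-m\Phi_m\in|m(K_S+\Omega-\Phi_m)|$, so $D$ lies in $|m(K_S+\Omega-\Phi_m)|+m\Phi_m$.

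For the forward direction I verify the hypotheses of Theorem \ref{thm:hmck} with $k=m$ and $\Phi=\Phi_m$. The divisor $m\Phi_m$ is Cartier: $m\Delta$ and $mS$ are Cartier, so $m(\Delta-S)$ is Cartier and $m\Omega=m(\Delta-S)_{|S}$ is an integral divisor on the smooth variety $S$; the fixed part $\Fix|m(K_X+\Delta)|_S$ is also integral, so $m\Phi_m$ is integral and therefore Cartier. The upper bound $\Phi_m\leq\Omega$ is immediate. For the lower bound, subadditivity of the fixed parts of restricted linear systems gives $\Fix|j(K_X+\Delta)|_S\leq\tfrac{j}{m}\Fix|m(K_X+\Delta)|_S$ for any positive integer $j$ divisible by $m$, so taking liminf componentwise yields $F\leq\tfrac{1}{m}\Fix|m(K_X+\Delta)|_S$, and therefore
$$\Omega\wedge(1-\tfrac{\varepsilon}{m})F\leq\Omega\wedge F\leq\Omega\wedge\tfrac{1}{m}\Fix|m(K_X+\Delta)|_S=\Phi_m.$$
Since $A$ is ample, any sufficiently small rational $\varepsilon>0$ makes $\varepsilon(K_X+\Delta)+A$ ample, so such an $\varepsilon$ can be fixed. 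Theorem \ref{thm:hmck} then provides the forward inclusion and combining with the reverse direction yields equality. The only mildly delicate point is the comparison of the asymptotic $F$ with the finite-level $\tfrac{1}{m}\Fix|m(K_X+\Delta)|_S$, handled by the subadditivity observation above; all the real substance lies in Theorem \ref{thm:hmck} itself.
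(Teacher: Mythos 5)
Your proof is correct and is exactly the intended argument: the paper gives no proof, labelling the corollary an "immediate consequence" of Theorem \ref{thm:hmck}, and what you have written is the natural fleshing-out — the forward inclusion from the extension theorem with $k=m$, $\Phi=\Phi_m$ (the hypothesis $\Omega\wedge(1-\frac{\varepsilon}{m})F\leq\Phi_m$ checked via subadditivity of fixed parts along multiples of $m$), and the reverse inclusion by the trivial observation that $m\Phi_m$ is contained in every member of $|m(K_X+\Delta)|_S$.
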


\begin{dfnlm}
Let $(X,\Delta)$ be a log pair and let $f\colon Y\rightarrow X$ be a proper birational morphism. We can write uniquely
$$K_Y+B_Y=f^*(K_X+\Delta)+E_Y,$$
where $B_Y$ and $E_Y$ are effective with no common components and $E_Y$ is $f$-exceptional. There is a well-defined {\em boundary\/}
b-divisor $\B(X,\Delta)$ given by $\B(X,\Delta)_Y=B_Y$ for every model $Y\rightarrow X$.
\end{dfnlm}
\begin{proof}
Let $h\colon Y'\rightarrow Y$ be a log resolution and denote $g=f\circ h$. Pushing forward
$K_{Y'}+B_{Y'}=g^*(K_X+\Delta)+E_{Y'}$ via $h_*$ yields
$$K_Y+h_*B_{Y'}=f^*(K_X+\Delta)+h_*E_{Y'},$$
and thus $h_*B_{Y'}=B_Y$ since $h_*B_{Y'}$ and $h_*E_{Y'}$ have no common components.
\end{proof}
\begin{lem}\label{disjoint}
Let $(X,\Delta)$ be a log canonical pair. There exists a log resolution $Y\rightarrow X$ such that the components of $\{\B(X,\Delta)_Y\}$ are disjoint.
\end{lem}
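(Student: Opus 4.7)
The plan is to begin with an arbitrary log resolution and then iteratively blow up pairwise intersections of components of $\{B_Y\}$, using a lexicographic invariant to guarantee termination.

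First I would apply Hironaka to obtain an initial log resolution $f_0\colon Y_0\to X$ on which $\mathrm{Supp}(B_{Y_0}+E_{Y_0})$ is simple normal crossings. Log canonicity forces all coefficients of $B_{Y_0}$ into $[0,1]$, and the components of $\{B_{Y_0}\}$ are exactly those prime components whose coefficient lies in $(0,1)$; I will call these the \emph{bad} components. If no two bad components meet then the lemma holds with $Y=Y_0$. Otherwise, pick two meeting bad components $D_i,D_j$ with coefficients $b_i,b_j\in(0,1)$, choose an irreducible (hence smooth, by snc) component $Z$ of $D_i\cap D_j$, and blow it up to obtain $h\colon Y_1\to Y_0$ with exceptional divisor $E$.

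By the snc condition, no component of $B_{Y_0}+E_{Y_0}$ other than $D_i$ and $D_j$ contains $Z$; hence $h^*E_{Y_0}=h^{-1}_*E_{Y_0}$ and $h^*B_{Y_0}=h^{-1}_*B_{Y_0}+(b_i+b_j)E$. Combining this with $K_{Y_1}=h^*K_{Y_0}+E$ and pulling back the identity $K_{Y_0}+B_{Y_0}-E_{Y_0}=f_0^*(K_X+\Delta)$ via $h$ gives
$$(f_0\circ h)^*(K_X+\Delta)=K_{Y_1}+h^{-1}_*B_{Y_0}-h^{-1}_*E_{Y_0}+(b_i+b_j-1)E,$$
so the coefficient of $E$ in $B_{Y_1}$ equals $\max\{0,b_i+b_j-1\}$. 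Thus $E$ is a new bad component of $\{B_{Y_1}\}$ precisely when $b_i+b_j>1$, and in this case its coefficient $b_i+b_j-1$ is strictly smaller than $\min\{b_i,b_j\}$; in any case the strict transforms $D_i'$ and $D_j'$ are disjoint on $Y_1$.

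The hard part will be termination, since when $b_i+b_j>1$ the new exceptional $E$ meets both $D_i'$ and $D_j'$ along disjoint sections of $E\to Z$, thereby producing two new meeting bad pairs of sums $2b_i+b_j-1$ and $b_i+2b_j-1$, each strictly less than $b_i+b_j$. To control this I would always blow up a pair realising the lexicographic maximum of $(b_i+b_j,\,\#\{\text{meeting bad pairs of the same sum}\})$; the previous estimate shows that this invariant strictly decreases at every step. Every coefficient occurring in the recursion is a non-negative integer linear combination of the finitely many initial coefficients of $B_{Y_0}$ minus a non-negative integer, so the possible values of $b_i+b_j$ form a subset of a single finitely generated subgroup of $\R$ intersected with $(0,2)$; this satisfies the descending chain condition outright when $\Delta$ is a $\Q$-divisor (by a common denominator), and in general via a Diophantine argument in the spirit of Lemma~\ref{diophant2}. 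Termination of the algorithm then produces the required log resolution.
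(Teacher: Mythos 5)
The paper offers no proof of this lemma beyond the citations \cite[Proposition 2.36]{KM98} and \cite[Lemma 6.7]{HM05}, and your blow-up-and-descend scheme is exactly the argument those references give. As written, however, your termination step has two genuine gaps.

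First, blowing up a single irreducible component $Z$ of $D_i\cap D_j$ does not make the strict transforms $D_i'$ and $D_j'$ disjoint: by the snc hypothesis $D_i\cap D_j$ is smooth but possibly disconnected, and $D_i'$ and $D_j'$ still meet along the strict transforms of the remaining components. The pair $(D_i',D_j')$ therefore survives with the same sum $b_i+b_j$, and your lexicographic invariant need not decrease at all. The repair is easy --- blow up the entire smooth centre $D_i\cap D_j$ (or its components one after another), after which the pair $(D_i,D_j)$ disappears while every newly created meeting pair involving $E$ has sum $2b_i+b_j-1$, $b_i+2b_j-1$ or $b_k+(b_i+b_j-1)$, each strictly less than $b_i+b_j$ because all bad coefficients are $<1$ --- but it has to be made.

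Second, your justification of termination for real coefficients is incorrect as stated: a finitely generated subgroup of $\R$ intersected with $(0,2)$ is in general dense (take $\Z+\Z\sqrt2$) and does not satisfy the descending chain condition, so no Diophantine argument in the spirit of Lemma \ref{diophant2} will make the decreasing sequence of maxima stabilise. What is true, and what one should prove, is a finiteness statement: writing $\varepsilon_\ell=1-a_\ell>0$ for the initial bad coefficients $a_\ell\in(0,1)$, every bad coefficient produced by the recursion has the form $1-\sum_\ell n_\ell\varepsilon_\ell$ with $n_\ell\in\N$ not all zero, since this shape is preserved by the operation $(x,y)\mapsto x+y-1$. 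Membership in $(0,1)$ forces $\sum_\ell n_\ell\varepsilon_\ell<1$, hence $n_\ell<1/\varepsilon_\ell$ for each $\ell$, so only finitely many coefficients --- and therefore only finitely many values of $b_i+b_j$ --- can ever occur. With that finiteness in hand, your lexicographic descent does terminate. (In the paper's actual applications the boundaries are $\Q$-divisors, so the common-denominator argument you mention would suffice there, but the lemma is stated for arbitrary log canonical pairs.)
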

\begin{proof}
See \cite[Proposition 2.36]{KM98} or \cite[Lemma 6.7]{HM05}.
\end{proof}

The main result of this section is the following.

\begin{thm}\label{thm:2}
Let $X$ be a smooth variety, $S$ a smooth prime divisor and $A$ a very general ample $\Q$-divisor on $X$.
For $i=1,\dots,\ell$ let $D_i=k_i(K_X+\Delta_i)$, where $(X,\Delta_i=S+B_i+A)$ is a log smooth plt pair with $\lfloor \Delta_i\rfloor=S$
and $|D_i|\neq\emptyset$.
Assume Property $\mcal L_A^G$ in dimensions $\leq\dim X$ and Theorem \ref{thm:main} in dimension $\dim X-1$.
Then the algebra $\res_S R(X;D_1,\dots,D_\ell)$ is finitely generated.
\end{thm}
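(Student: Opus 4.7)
Set $\mcal S=\bigoplus_{i=1}^\ell\N e_i$ and $\mu(e_i)=D_i$. For $s=\sum n_ie_i$ with $k(s)=\sum n_ik_i$ we have $\mu(s)=k(s)(K_X+\Delta(s))$, where $\Delta(s)=S+B(s)+A$, $B(s)=k(s)^{-1}\sum n_ik_iB_i$, and $\Omega(s):=(\Delta(s)-S)_{|S}$. Since $A$ is very general, $(X,\Delta(s))$ is plt with $\lfloor\Delta(s)\rfloor=S$ and $(S,\Omega(s)+A_{|S})$ is canonical throughout the rational hull of $\mcal S$; Property $\mcal L_A^G$ on $X$ combined with the non-emptiness of each $|D_i|$ lets us arrange $S\not\subset\B(\mu(s))$ in the cone of interest.

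Corollary \ref{cor:hmck} now identifies, for each $s$ and each sufficiently divisible $m$,
$$\bigl|m(K_X+\Delta(s))\bigr|_S=\bigl|m(K_S+\Omega(s)-\Phi_{s,m})\bigr|+m\Phi_{s,m},$$
with $\Phi_{s,m}=\Omega(s)\wedge\tfrac1m\Fix|m(K_X+\Delta(s))|_S$. Along divisibility this is a decreasing sequence converging to $\Phi^\sharp(s):=\Omega(s)\wedge N_\sigma\|K_X+\Delta(s)\|_S$. This reduces the problem to analysing the divisorial $\mcal S$-graded algebra on $S$ built from $k(s)(K_S+\Omega(s)-\Phi^\sharp(s))$, together with the fixed-part ``shift'' by $k(s)\Phi^\sharp(s)$.

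The heart of the argument is to show that $\Phi^\sharp$ admits a finite rational polyhedral decomposition $\mcal S_\R=\bigcup\mcal C_j$ on each cone of which it is $\Q$-linear, and that the residual $\Theta_j(s):=\Omega(s)-\Phi^\sharp(s)-A_{|S}$ makes $(S,\Theta_j(s)+A_{|S})$ log smooth log canonical. Superadditivity of each coefficient $\sigma_P\|\cdot\|_S$ (Remark \ref{rem:11} and Lemma \ref{lem:restrictedord}) places $\Phi^\sharp$ inside the straightening framework of Lemma \ref{concave}; Property $\mcal L_A^G$ on $S$ dictates which $\sigma_P$ can be positive on each subcone; and the Diophantine approximation results of Section~4, especially Lemma \ref{lem:approximation}, allow us to replace an irrational point $s\in\mcal S_\R$ by a convex combination of nearby rational $s'$, for which Lemma \ref{lem:restrictedord} lifts each vanishing $\sigma_P\|\mu(s')\|_S$ to an effective statement $\mult_P\Fix|l\mu(s')|_S=0$ on $S$. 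Together these rule out jumps of $\Phi^\sharp$ inside any maximal cone. I expect this to be the main obstacle: it is a higher-rank analogue of the rank-one (pl-flip) argument of Shokurov and Hacon--M\textsuperscript{c}Kernan, and it depends on correctly interleaving Theorem \ref{thm:hmck} with the convex-geometric machinery of \cite[Section~3]{Laz07}.

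Once the decomposition is established, fix a cone $\mcal C_j$ and truncate by a common denominator so that on $\mcal C_j\cap\mcal S^{(\kappa)}$ the assignment $s\mapsto k(s)(K_S+\Theta_j(s)+A_{|S})$ is additive, with $(S,\Theta_j(s)+A_{|S})$ log smooth log canonical and $A_{|S}$ a general ample $\Q$-divisor on $S$. Theorem \ref{thm:main} in dimension $\dim X-1$ then delivers finite generation of the resulting divisorial algebra on $S$; multiplication by the canonical section of the effective divisor $k(s)\Phi^\sharp(s)$, which is $\Q$-linear in $s$ on $\mcal C_j$, identifies this algebra (up to a linear twist of the grading) with the truncated restricted algebra on $\mcal C_j$. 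Assembling the finitely many cones $\mcal C_j$ via Lemma \ref{lem:1}(3) yields finite generation of $\res_SR(X;D_1,\dots,D_\ell)$.
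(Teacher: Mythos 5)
Your outline follows the same overall route as the paper: reduce to $S\not\subset\Fix|D_i|$, use Corollary \ref{cor:hmck} to identify $\res_S R(X;D_1,\dots,D_\ell)$ with the algebra $\bigoplus_s H^0(S,t_s(K_S+\Theta_s))$, prove that the limiting map $s\mapsto\Theta_s^\sharp$ is rationally piecewise linear, and then invoke Theorem \ref{thm:main} in dimension $\dim X-1$ cone by cone. But two points are genuine gaps rather than omitted routine detail. First, you assert that $(S,\Omega(s)+A_{|S})$ is canonical ``since $A$ is very general.'' This is false as stated: if two components of some $B_i$ whose coefficients sum to more than $1$ meet along a codimension-two locus of $X$ meeting $S$, the restricted pair is klt but not canonical, and canonicity is a hypothesis of Theorem \ref{thm:hmck}, without which the whole lifting mechanism is unavailable. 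The paper devotes Step 2 of its proof to arranging this, passing by Lemma \ref{disjoint} to a log resolution on which the components of the fractional boundary are disjoint and absorbing a small exceptional divisor into the ample part.

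Second, the rational piecewise linearity of $\Phi^\sharp$, which you correctly flag as the main obstacle, is where essentially all of the work lies, and naming the straightening lemma, Property $\mcal L_A^G$ on $S$ and Diophantine approximation does not yet constitute an argument. Concretely the paper needs: (i) the identification of $\liminf_m\frac1m\Fix|m(K_X+\Delta_s)|_S$ with $N_\sigma\|K_X+\Delta_s\|_S$ and the continuity of $s\mapsto\Theta_s^\sharp$ along segments (Theorem \ref{lem:lipschitz}), proved by approximating $(\Delta_s,\Theta_s^\sigma)$ by rational pairs and lifting $|k_i(K_S+\Theta_i)|+k_i(\Omega_i-\Theta_i)$ into $|k_i(K_X+\Delta_i)|_S$ via Theorem \ref{thm:hmck} and multiplier ideals, with Lemma \ref{bounded} supplying the rationality of $\Omega_s\wedge F_s^\sharp$ that you take for granted; (ii) local linearity of $\Theta^\sharp$ at every point in its smallest rational affine subspace (Theorem \ref{thm:inclusion} and Step 1 of Theorem \ref{thm:linear1}), where concavity gives one inequality and the extension theorem at approximating rational points gives the other; and (iii) a reduction to $2$-planes together with a separate argument that the walls of the resulting decomposition are rational. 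Your claim that Lemma \ref{lem:restrictedord} and Lemma \ref{lem:approximation} ``rule out jumps of $\Phi^\sharp$'' is exactly the assertion that must be proved, and a priori superadditivity only yields one of the two inequalities required at each point; until this is supplied the proof is incomplete.
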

\begin{proof}
{\em Step 1.\/}
I first show that we can assume $S\notin\Fix|D_i|$ for all $i$.

To prove this, let $K_X$ be a divisor with $\OO_X(K_X)\simeq\omega_X$ and
$\Supp A\not\subset\Supp K_X$, and let $\Lambda$ be the monoid in $\Div(X)$ generated
by the components of $K_X$ and all $\Delta_i$.
Denote $\mcal C_S=\{P\in\Lambda_\R:S\notin\B(P)\}$. By Property $\mcal L_A^G$, the set
$\mcal A=\sum_i\R_+D_i\cap\mcal C_S$ is a rational polyhedral cone.

The monoid $\sum_{i=1}^\ell\R_+D_i\cap\Lambda$ is finitely generated and let $P_1,\dots,P_q$ be its generators with $P_i=D_i$ for $i=1,\dots,\ell$.
Let $\mu\colon\bigoplus_{i=1}^q\N e_i\rightarrow\Div(X)$ be an additive map from a simplicial monoid such that $\mu(e_i)=P_i$.
Therefore $\mcal S=\mu^{-1}(\mcal A\cap\Lambda)\cap\bigoplus_{i=1}^\ell\N e_i$ is a finitely generated monoid and let $h_1,\dots,h_m$ be generators of
$\mcal S$, and observe that $\mu(h_i)$ is a multiple of an adjoint bundle for every $i$.

Since $\res_S H^0(X,\mu(s))=0$ for every $s\in\big(\bigoplus_{i=1}^\ell\N e_i\big)\backslash\mcal S$, we have that the algebra
$\res_S R(X,\mu(\bigoplus_{i=1}^\ell\N e_i))=\res_S R(X;D_1,\dots,D_\ell)$ is finitely generated if and only if
$\res_S R(X,\mu(\mcal S))$ is. Since we have the diagram
$$\bfig
 \square/->`>`>`>/<1300,500>[R(X;\mu(h_1),\dots,\mu(h_m))`R(X,\mu(\mcal S))
 `\res_S R(X;\mu(h_1),\dots,\mu(h_m))`\res_S R(X,\mu(\mcal S));```]
 \efig$$
where the horizontal maps are natural projections and the vertical maps are restrictions
to $S$, it is enough to prove that the restricted algebra $\res_S R(X;\mu(h_1),\dots,\mu(h_m))$ is finitely generated. By passing to a truncation,
I can assume further that $S\notin\Fix|\mu(h_i)|$ for $i=1,\dots,m$.\\[2mm]
{\em Step 2.\/}
Therefore I can assume $\mcal S=\bigoplus_{i=1}^\ell\N e_i$ and $\mu(e_i)=D_i$ for every $i$. For $s=\sum_{i=1}^\ell t_ie_i\in\mcal S_\Q$ and
$t_s=\sum_{i=1}^\ell t_ik_i$, denote $\Delta_s=\sum_{i=1}^\ell t_ik_i\Delta_i/t_s$ and $\Omega_s=(\Delta_s-S)_{|S}$.
Observe that
$$R(X;D_1,\dots,D_\ell)=\bigoplus_{s\in\mcal S}H^0(X,t_s(K_X+\Delta_s)).$$
In this step I show that we can assume that $(S,\Omega_s+A_{|S})$ is terminal for every $s\in\mcal S_\Q$.

Let $\sum F_k=\bigcup_i\Supp B_i$, and denote $\B_i=\B(X,\Delta_i)$ and $\B=\B(X,S+\nu\sum_kF_k+A)$,
where $\nu=\max_{i,k}\{\mult_{F_k}B_i\}$. By Lemma \ref{disjoint} there is a log resolution $f\colon Y\rightarrow X$
such that the components of $\{\B_Y\}$ do not intersect, and denote $D_i'=k_i(K_Y+\B_{iY})$. Observe that
\begin{equation}\label{eq:4}
R(X;D_1,\dots,D_\ell)\simeq R(Y;D_1',\dots,D_\ell').
\end{equation}
Since $B_i\leq\nu\sum_kF_k$, by comparing discrepancies we see that the
components of $\{\B_{iY}\}$ do not intersect for every $i$, and notice that
$f^*A=f_*^{-1}A\leq\B_{iY}$ for every $i$ since $A$ is very general.
For $s=\sum_{i=1}^\ell t_ie_i\in\mcal S_\Q$ and $t_s=\sum_{i=1}^\ell t_ik_i$, denote $\Delta_s'=\sum_{i=1}^\ell t_ik_i\B_{iY}/t_s$.
Let $H$ be a small effective $f$-exceptional $\Q$-divisor
such that $A'\sim_\Q f^*A-H$ is a general ample $\Q$-divisor, and let $T=f_*^{-1}S$.
Then, setting $\Psi_s=\Delta_s'-f^*A-T+H\geq0$ and $\Omega_s'=\Psi_{s|T}+A_{|T}'$,
the pair $(T,\Omega_s'+A_{|T}')$ is terminal and $K_Y+T+\Psi_s+A'\sim_\Q K_Y+\Delta_s'$. Now replace $X$ by $Y$, $S$ by $T$, $\Delta_s$
by $T+\Psi_s+A'$ and $\Omega_s$ by $\Omega_s'$.\\[2mm]
{\em Step 3.\/}
For every $s\in\mcal S$, denote $F_s=\frac{1}{t_s}\Fix|t_s(K_X+\Delta_s)|_S$ and $F_s^\sharp=\liminf\limits_{m\rightarrow\infty}F_{ms}$.
Define the maps $\Theta\colon\mcal S\rightarrow\Div(S)_\Q$ and $\Theta^\sharp\colon\mcal S\rightarrow \Div(S)_\Q$ by
$$\Theta(s)=\Omega_s-\Omega_s\wedge F_s,\qquad\Theta^\sharp(s)=\Omega_s-\Omega_s\wedge F_s^\sharp.$$
Then, denoting $\Theta_s=\Theta(s)$ and $\Theta_s^\sharp=\Theta^\sharp(s)$, we have
\begin{equation}\label{eq:5}
\res_S R(X;D_1,\dots,D_\ell)\simeq\bigoplus_{s\in\mcal S}H^0(S,t_s(K_S+\Theta_s))
\end{equation}
by Corollary \ref{cor:hmck}. Furthermore, for $s\in\mcal S$ let $\varepsilon>0$ be a rational number such that
$\varepsilon(K_X+\Delta_s)+A$ is ample. Then by Theorem \ref{thm:hmck} we have
$$|k_s(K_S+\Omega_s-\Phi_s)|+k_s\Phi_s\subset|k_s(K_X+\Delta_s)|_S$$
for any $\Phi_s$ and $k_s$ such that $k_s\Delta_s,k_s\Phi_s\in\Div(X)$ and $\Omega_s\wedge(1-\frac{\varepsilon}{k_s})F_s\leq\Phi_s\leq\Omega_s$.
Then similarly as in the proof of \cite[Theorem 7.1]{HM08}, by Lemma \ref{bounded} we have that $\Omega_s\wedge F_s^\sharp$ is rational and
\begin{equation}\label{eq:restriction}
\res_S R(X,k_s^\sharp(K_X+\Delta_s))\simeq R(S,k_s^\sharp(K_S+\Theta_s^\sharp)),
\end{equation}
where $k_s^\sharp\Theta_s^\sharp$ and $k_s^\sharp\Delta_s$ are both Cartier.
Note also, by the same proof, that $G\not\subset\B(K_S+\Theta_s^\sharp)$ for every component $G$ of $\Theta_s^\sharp$. In particular,
$\Theta_{k_s^\sharp ps}=\Theta_{k_s^\sharp s}=\Theta_s^\sharp$ for every $p\in\N$.

Define maps $\lambda\colon\mcal S\rightarrow\Div(S)_\Q$ and $\lambda^\sharp\colon\mcal S\rightarrow \Div(S)_\Q$ by
$$\lambda(s)=t_s(K_S+\Theta_s),\qquad\lambda^\sharp(s)=t_s(K_S+\Theta^\sharp_s).$$
By Theorem \ref{lem:PL} below, there is a finite rational polyhedral subdivision $\mcal S_\R=\bigcup\mcal C_i$ such that the map
$\lambda^\sharp$ is linear on each $\mcal C_i$. In particular, there is a sufficiently divisible positive integer $\kappa$ such that
$\kappa\lambda^\sharp(s)$ is Cartier for every $s\in\mcal S$, and thus $\kappa\lambda^\sharp(s)=\lambda(\kappa s)$ for every $s\in\mcal S$.
Therefore the restriction of $\lambda$ to $\mcal S_i^{(\kappa)}$ is additive, where $\mcal S_i=\mcal S\cap\mcal C_i$.
If $s_1^i,\dots,s_z^i$ are generators of $\mcal S_i^{(\kappa)}$,
then the Cox ring $R(S;\lambda(s_1^i),\dots,\lambda(s_z^i))$ is finitely generated by Theorem \ref{thm:main},
and so is the algebra $R(S,\lambda(\mcal S_i^{(\kappa)}))$ by projection.
Hence the algebra $\bigoplus_{s\in\mcal S}H^0(S,\lambda(s))$ is finitely generated, and this together with \eqref{eq:5} finishes the proof.
\end{proof}

It remains to prove that the map $\lambda^\sharp$ is rationally piecewise linear.
Firstly we have the following result, which can be viewed as a global version of Lemma \ref{bounded}. Recall that $\mcal S=\bigoplus_{i=1}^\ell\N e_i$.

\begin{lem}\label{uniformbound}
There is a positive integer $r$ such that the following stands. If $\Psi\in\Div(S)_\Q$ is such that
$\Supp\Psi\subset\bigcup_{i=1}^\ell\Supp(\Omega_{e_i}-A_{|S})$ and no component of $\Psi$ is in $\B(K_S+\Psi+A_{|S})$, then
no component of $\Psi$ is in $\Fix|k(K_S+\Psi+A_{|S})|$ for every $k$ with $k(\Psi+A_{|S})/r$ Cartier.
\end{lem}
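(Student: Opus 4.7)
The proof should proceed by assembling a single finitely generated monoid on $S$ that captures the adjoint bundles $k(K_S+\Psi+A|_S)$ for every admissible $\Psi$ simultaneously, and then extracting the uniform $r$ from Proposition~\ref{pro:1}. It is a polytope-level analogue of Lemma~\ref{bounded}, in which compactness is replaced by a single application of Theorem~\ref{thm:main} on $S$.

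First I would set $A'=A|_S$, which is a general ample $\Q$-divisor on $S$ by the very generality of $A$ on $X$, and let $V\subset\Div(S)_\R$ be the vector space spanned by the components of $\bigcup_i\Supp(\Omega_{e_i}-A')$. Log smoothness of $(X,\Delta_i)$ makes the reduced sum of these components SNC on $S$, so $\mcal L_V=\{\Theta\in V:(S,\Theta+A')\text{ is log canonical}\}$ is a rational polytope. I would then argue that it suffices to treat $\Psi\in\mcal L_V\cap V_\Q$: a component $G$ of $\Psi$ with coefficient at least one would, after subtracting its integer part and reducing to the log canonical case, force $G\subset\B(K_S+\Psi+A')$, contradicting admissibility.

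Next, I would fix a Cartier $K_S$ representing $\omega_S$ with support disjoint from $V$ and $A'$, and let $\Lambda\subset\Div(S)$ be the monoid generated by $K_S$, $A'$ and the components of $V$. Then $\mcal S=\R_+(K_S+A'+\mcal L_V)\cap\Lambda$ is a finitely generated saturated monoid whose generators have the form $D_j=\kappa_j(K_S+\Theta_j+A')$ with $\Theta_j\in\mcal L_V\cap V_\Q$, each $(S,\Theta_j+A')$ being log smooth log canonical. Restricting to the (still finitely generated) subcone on which the linear systems $|D_j|$ are non-empty, Theorem~\ref{thm:main} applied on $S$---valid by the inductive hypothesis, since $\dim S=\dim X-1$---gives finite generation of the Cox ring $R(S;D_1,\dots,D_q)$, and hence, by projection, of the $\mcal S$-graded algebra $R(S,\mcal S)$. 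Proposition~\ref{pro:1} applied to the identity map $\iota\colon\mcal S\to\mcal S$ then yields a positive integer $r$, which I may enlarge so that it is also divisible by the Cartier index of $K_S$, such that $\bMob_\iota(irs)=i\bMob_\iota(rs)$ for every $s\in\mcal S$ and every $i\in\N$.

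Finally, for admissible $\Psi\in\mcal L_V\cap V_\Q$ and $k$ with $k(\Psi+A')/r$ Cartier, the divisor $P_k:=k(K_S+\Psi+A')$ lies in $\mcal S^{(r)}$; writing $P_k=rs_k$ with $s_k\in\mcal S$, the truncation identity gives $\bMob_\iota(nP_k)=n\bMob_\iota(P_k)$ for every $n\in\N$, hence $\mult_G\Fix|nP_k|=n\mult_G\Fix|P_k|$ for every prime divisor $G$ on $S$. Taking $G$ to be a component of $\Psi$, positive homogeneity gives $\mult_G\Fix|P_k|=\ord_G\|P_k\|=k\ord_G\|K_S+\Psi+A'\|$, and the hypothesis $G\not\subset\B(K_S+\Psi+A')$ together with $\B(D)=\bigcap_{C\in|D|_\Q}\Supp C$ for a $\Q$-divisor $D$ forces this to vanish, so $G\not\subset\Fix|P_k|$. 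The main obstacle I anticipate is the reduction to $\Psi\in\mcal L_V$ and the bookkeeping for non-empty linear systems among the generators of $\mcal S$; conceptually the argument is simply a uniform packaging of Theorem~\ref{thm:main} and Proposition~\ref{pro:1} over the polytope of log canonical boundaries, with the admissibility hypothesis translating directly into vanishing of the asymptotic order of vanishing at every component of $\Psi$.
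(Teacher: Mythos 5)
Your overall architecture is the right one and matches the paper's: assemble all the relevant adjoint divisors $K_S+\Psi+A_{|S}$ into a single finitely generated monoid on $S$, obtain finite generation of the corresponding Cox ring from Theorem \ref{thm:main} in dimension $\dim S$, extract a uniform $r$ from Proposition \ref{pro:1}, and conclude: additivity of $\Mob$ on $\N\cdot k(K_S+\Psi+A_{|S})$ together with $\ord_G\|K_S+\Psi+A_{|S}\|=0$ forces $\mult_G\Fix|k(K_S+\Psi+A_{|S})|=0$. That closing step is exactly right.

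There are, however, two places where your reduction does not hold as written. First, the claim that a component $G$ of $\Psi$ with coefficient $\geq 1$ is forced into $\B(K_S+\Psi+A_{|S})$ is false: if $K_S+\Psi+A_{|S}$ happens to be ample, its stable base locus is empty whatever the coefficients of $\Psi$ are. (The lemma is only ever applied with coefficients in $[0,1)$, so the restriction is harmless, but it cannot be derived from the hypotheses the way you claim.) Second, and more seriously, your monoid is built from the log canonical polytope $\mcal L_V$, whose generators $D_j=\kappa_j(K_S+\Theta_j+A')$ need not satisfy $|D_j|\neq\emptyset$, which Theorem \ref{thm:main} requires; your fix --- restricting to the subcone on which the $|D_j|$ are non-empty --- presupposes that this subcone is finitely generated, which is not known at this stage (in Proposition \ref{pro:2} that fact is \emph{deduced} from finite generation, i.e.\ from the kind of statement you are in the middle of proving). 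The paper avoids both problems at once by building the cone not from $\mcal L_V$ but from the polytopes $\mcal P_{G_j}=\{\Xi\in\sum_j[0,1]G_j:G_j\not\subset\B(K_S+\Xi+A_{|S})\}$ supplied by Property $\mcal L_A^G$: their coefficients are bounded by $1$ by definition, every $K_S+\Psi+A_{|S}$ appearing in the statement lies in the cone over their convex hull, and every rational point of that cone has non-empty linear system after clearing denominators, since $G_j\not\subset\B(\cdot)$ forces $\B(\cdot)\neq S$. Replacing $\mcal L_V$ by these polytopes repairs your argument and recovers the paper's proof.
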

\begin{proof}
Let $\sum_{j=1}^q G_j=\bigcup_{i=1}^\ell\Supp(\Omega_{e_i}-A_{|S})$, and for each $j$ let
$\mcal P_{G_j}=\{\Xi\in\sum_j[0,1]G_j:G_j\not\subset\B(K_S+\Xi+A_{|S})\}$. Each $\mcal P_{G_j}$ is a rational polytope by Property $\mcal L_A^G$.
Let $K_S$ be a divisor such that $\OO_S(K_S)\simeq\omega_S$ and $\Supp A\not\subset\Supp K_X$,
let $\mcal P$ be the convex hull of all rational polytopes $K_S+A_{|S}+\mcal P_{G_j}$, and set $\mcal C=\R_+\mcal P$.
Observe that $K_S+\Psi+A_{|S}\in\mcal C$.
Let $G_{q+1},\dots,G_w$ be the components of $K_S+A_{|S}$ not equal to $G_j$ for $j=1,\dots,q$, and let $\Lambda=\bigoplus_{j=1}^w\N G_j$.
Then by Theorem \ref{thm:main} in dimension $\dim S$ the algebra $R(S,\mcal C\cap\Lambda)$ is
finitely generated and the map $\bMob_{\iota|\mcal C\cap\Lambda^{(r)}}$ is piecewise additive for some $r$ by Proposition \ref{pro:1},
where $\iota\colon\Lambda\rightarrow\Lambda$ is the identity map. In particular,
if $G_j\not\subset\B(K_S+\Psi+A_{|S})$ and $k(\Psi+A_{|S})/r$ is Cartier, then $G_j\not\subset\Fix|k(K_S+\Psi+A_{|S})|$.
\end{proof}

\begin{thm}\label{lem:lipschitz}
For any $s,t\in\mcal S_\R$ we have
$$\lim_{\varepsilon\downarrow0}\Theta_{s+\varepsilon(t-s)}^\sharp=\Theta_s^\sharp.$$
\end{thm}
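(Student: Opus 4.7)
The plan is to reduce to componentwise convergence of $\Omega_{s_\varepsilon}\wedge F^\sharp_{s_\varepsilon}$ as $\varepsilon\downarrow 0$ and to prove each of the two inequalities separately. Setting $s_\varepsilon=s+\varepsilon(t-s)$, the supports of $\Omega_{s_\varepsilon}\wedge F^\sharp_{s_\varepsilon}$ for $\varepsilon\in[0,1]$ lie in the fixed finite set of components of $\bigcup_i\Omega_{e_i}$, so it is enough to show $\lim_{\varepsilon\downarrow 0}\mult_G(\Omega_{s_\varepsilon}\wedge F^\sharp_{s_\varepsilon})=\mult_G(\Omega_s\wedge F^\sharp_s)$ for every such prime $G$. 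Since $t_{s_\varepsilon}\Omega_{s_\varepsilon}$ is linear in $\varepsilon$, $\Omega_{s_\varepsilon}\to\Omega_s$ continuously, so the task becomes one of controlling $\mult_G F^\sharp_{s_\varepsilon}=\sigma_G\|K_X+\Delta_{s_\varepsilon}\|_S$.

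For the inequality $\liminf_\varepsilon\mult_G\Theta^\sharp_{s_\varepsilon}\geq\mult_G\Theta^\sharp_s$, observe that
\[
K_X+\Delta_{s_\varepsilon}=\tfrac{(1-\varepsilon)t_s}{t_{s_\varepsilon}}(K_X+\Delta_s)+\tfrac{\varepsilon t_t}{t_{s_\varepsilon}}(K_X+\Delta_t)
\]
is a positive convex combination since $t_{s_\varepsilon}=(1-\varepsilon)t_s+\varepsilon t_t$. Sublinearity of the restricted asymptotic invariant $\sigma_G\|\cdot\|_S$ on the cone $\mcal C_-$ (Remark \ref{rem:11}) then gives $\limsup_{\varepsilon\downarrow 0}\sigma_G\|K_X+\Delta_{s_\varepsilon}\|_S\leq\sigma_G\|K_X+\Delta_s\|_S$, which is precisely the desired bound.

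The reverse inequality $\limsup_\varepsilon\mult_G\Theta^\sharp_{s_\varepsilon}\leq\mult_G\Theta^\sharp_s$ is proved by lifting sections. It suffices to show $\liminf_\varepsilon\mult_G F^\sharp_{s_\varepsilon}\geq\mult_G F^\sharp_s$ on every $G$ with $\mult_G F^\sharp_s<\mult_G\Omega_s$; on the remaining components the bound $\mult_G(\Omega_{s_\varepsilon}\wedge F^\sharp_{s_\varepsilon})\leq\mult_G\Omega_{s_\varepsilon}\to\mult_G\Omega_s$ is enough. For each small rational $\varepsilon$ apply Lemma \ref{lem:approximation} to write $s_\varepsilon=\tfrac{k_1}{k_1+k_2}w_1+\tfrac{k_2}{k_1+k_2}w_2+\xi$ with $w_1,w_2\in\mcal S_\Q$ close to $s$, chosen to lie in the intersection of the rational polytopes $\mcal L_A^G$ over the relevant components $G$, with $k_iw_i$ integral and $\|\xi\|<\eta/(k_1+k_2)$ for $\eta$ chosen sufficiently small. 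Apply Corollary \ref{cor:hmck} at $w_1$ and $w_2$ to identify the restrictions of adjoint sections on $X$ with sections of $K_S+\Theta^\sharp_{w_i}$ on $S$; Lemma \ref{uniformbound} supplies a uniform denominator $r$ so that no relevant component appears in any fixed part. Combining the lifted sections in $R(X;D_1,\dots,D_\ell)$ according to the weights in the decomposition of $s_\varepsilon$ and restricting to $S$ then produces sections witnessing that $\Theta^\sharp_{s_\varepsilon}$ does not drop in the limit.

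The main obstacle is absorbing the Diophantine error $\xi$ into the slack $\varepsilon/k$ of Theorem \ref{thm:hmck}: one must take $\eta$ very small relative to $k_1+k_2$ so that, after a mild perturbation of the boundary $\Phi$, the extension theorem still applies and the $\xi$-contribution is controlled. The uniformity of Lemma \ref{uniformbound}, itself a consequence of Property $\mcal L_A^G$ and Theorem \ref{thm:main} in dimension $\dim S$, is what makes the simultaneous transfer of sections from $w_1,w_2$ to $s_\varepsilon$ possible.
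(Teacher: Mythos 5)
Your overall architecture (two one-sided inequalities, one from convexity, one from lifting sections) points in the right direction, but the proof does not close up, for two reasons. First, you silently set $\mult_G F^\sharp_{s}=\sigma_G\|K_X+\Delta_{s}\|_S$. By its definition $F^\sharp_s$ computes $\ord_G\|\cdot\|_S$, and Lemma \ref{lem:restrictedord} only gives $\sigma_G\|\cdot\|_S\leq\ord_G\|\cdot\|_S$; the reverse inequality on the components of $\Omega_s$ (equivalently $\Theta_s^\sigma=\Theta_s^\sharp$) is precisely the main content of the theorem and is where all the work lies. The paper's proof spends Steps 1--5 on it: approximate $(\Delta_s,\Theta_s^\sigma)$ by rational data $(\Delta_i,\Theta_i)$ via Lemma \ref{lem:surround}, prove $|k_i(K_S+\Theta_i)|+k_i(\Omega_i-\Theta_i)\subset|k_i(K_X+\Delta_i)|_S$ by the multiplier-ideal lifting of \cite{HM08}, use Lemma \ref{uniformbound} to conclude $\Theta_i^\sharp\geq\Theta_i$, and then get $\Theta_s^\sharp\geq\sum r_i\Theta_i^\sharp\geq\Theta_s^\sigma$ by concavity. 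Once that identity is in hand, the limit statement is the short sandwich of Step 6,
$$\Omega_s-\Theta_s^\sharp=\Omega_s\wedge\lim_{\varepsilon\downarrow0}\textstyle\sum\ord_P\|K_X+\Delta_s+\varepsilon(\Delta_t-\Delta_s+C)\|_S\cdot P\leq\Omega_s\wedge\lim_{\varepsilon\downarrow0}\textstyle\sum\ord_P\|K_X+\Delta_{s}+\varepsilon(\Delta_t-\Delta_s)\|_S\cdot P\leq\Omega_s-\Theta_s^\sharp,$$
using Lemma \ref{lem:restrictedord} and convexity; no separate ``easy direction'' via sublinearity of $\sigma$ is needed, and as written yours presupposes the unproved identification anyway.

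Second, the mechanism you propose for the hard inequality is oriented the wrong way. Decomposing $s_\varepsilon=s+\varepsilon(t-s)$ by Lemma \ref{lem:approximation} and lifting sections at the nearby rational points produces sections at $s_\varepsilon$, hence an \emph{upper} bound on $F^\sharp_{s_\varepsilon}$ and a \emph{lower} bound on $\Theta^\sharp_{s_\varepsilon}$; that re-proves $\liminf_\varepsilon\Theta^\sharp_{s_\varepsilon}\geq\Theta^\sharp_s$, not $\limsup_\varepsilon\Theta^\sharp_{s_\varepsilon}\leq\Theta^\sharp_s$. Since $s=\frac1{1-\varepsilon}s_\varepsilon-\frac{\varepsilon}{1-\varepsilon}t$ has a negative coefficient, sections at $s_\varepsilon$ and $t$ cannot be multiplied to produce sections at $s$, so there is no direct transfer in the direction you need; this is exactly the obstruction that forces the detour through $N_\sigma\|\cdot\|_S$, whose continuity along ample perturbations is built into its definition. (Lemma \ref{lem:approximation} is the tool for Theorem \ref{thm:linear1}, where one approximating point must be $s$ itself; for the present statement Lemma \ref{lem:surround} suffices.)
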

\begin{proof}
{\em Step 1.}
First we will prove that $\Theta^\sigma_s=\Theta_s^\sharp$, where
$$\Theta^\sigma_s=\Omega_s-\Omega_s\wedge N_\sigma\|K_X+\Delta_s\|_S,$$
cf.\ Remark \ref{rem:11}. I am closely following the proof of \cite[Theorem 7.16]{Hac08}. Let $r$ be a positive integer as in Lemma \ref{uniformbound},
let $\phi<1$ be the smallest positive coefficient of $\Omega_s-\Theta_s^\sigma$ if it exists, and set $\phi=1$ otherwise.
Let $V\subset\Div(X)_\R$ and $W\subset\Div(S)_\R$ be the smallest rational affine spaces containing $\Delta_s$ and $\Theta_s^\sigma$ respectively.
Let $0<\eta\ll1$ be a rational number such that $\eta(K_X+\Delta_s)+\frac12A$ is ample, and if $\Delta'\in V$ with $\|\Delta'-\Delta_s\|<\eta$,
then $\Delta'-\Delta_s+\frac12A$ is ample.
Then by Lemma \ref{lem:surround} there are rational points $(\Delta_i,\Theta_i)\in V\times W$ and integers $k_i\gg0$ such that:
\begin{enumerate}
\item we may write $\Delta_s=\sum r_i\Delta_i$ and $\Theta_s^\sigma=\sum r_i\Theta_i$, where $r_i>0$ and $\sum r_i=1$,
\item $k_i\Delta_i/r$ are integral and $\|\Delta_s-\Delta_i\|<\phi\eta/2k_i$,
\item $k_i\Theta_i/k_s$ are integral, $\|\Theta_s^\sigma-\Theta_i\|<\phi\eta/2k_i$ and observe that $\Theta_i\leq\Omega_i$
since $k_i\gg0$ and $(\Delta_i,\Theta_i)\in V\times W$.
\end{enumerate}
{\em Step 2.}
Set $A_i=A/k_i$ and $\Omega_i=(\Delta_i-S)_{|S}$. In this step I prove that for any component $P\in\Supp\Omega_s$, and for any $l>0$
sufficiently divisible, we have
\begin{equation}\label{equ:1}
\mult_P(\Omega_i\wedge\textstyle\frac1l\Fix|l(K_X+\Delta_i+A_i)|_S)\leq\mult_P(\Omega_i-\Theta_i).
\end{equation}
If $\phi=1$, \eqref{equ:1} follows immediately from Lemma \ref{lem:restrictedord}. Now assume $0<\phi<1$.
Since $\|\Omega_s-\Omega_i\|<\phi\eta/2k_i$ and $\|\Theta_s^\sigma-\Theta_i\|<\phi\eta/2k_i$, it suffices to show that
$$\mult_P(\Omega_i\wedge\textstyle\frac1l\Fix|l(K_X+\Delta_i+A_i)|_S)\leq(1-\frac{\eta}{k_i})\mult_P(\Omega_s-\Theta_s^\sigma).$$
Let $\delta>\eta/k_i$ be a rational number such that $\delta(K_X+\Delta_i)+\frac12A_i$ is ample. Since
$$\textstyle K_X+\Delta_i+A_i=(1-\delta)(K_X+\Delta_i+\frac12A_i)+\big(\delta(K_X+\Delta_i)+\frac{1+\delta}{2}A_i\big),$$
we have
$$\textstyle\ord_P\|K_X+\Delta_i+A_i\|_S\leq(1-\delta)\ord_P\|K_X+\Delta_i+\frac12A_i\|_S,$$
and thus
$$\mult_P\textstyle\frac1l\Fix|l(K_X+\Delta_i+A_i)|_S\leq(1-\frac{\eta}{k_i})\sigma_P\|K_X+\Delta_i\|_S$$
for $l$ sufficiently divisible, cf.\ Lemma \ref{lem:restrictedord}.\\[2mm]
{\em Step 3.}
In this step we prove that there exists an effective divisor $H'$ on $X$ not containing $S$ such that for all sufficiently divisible
positive integers $m$ we have
\begin{multline}\label{eq:6}
|m(K_S+\Theta_i)|+m(\Omega_i-\Theta_i)+(mA_i+H')_{|S}\\
\subset|m(K_X+\Delta_i)+mA_i+H'|_S.
\end{multline}
First observe that since $S\not\subset\B(K_X+\Delta_s)$ and $\Delta_i-\Delta_s+A_i$ is ample, we have $S\not\subset\Bs|m(K_X+\Delta_i+A_i)|$ for
$m$ sufficiently divisible. Assume further that $m$ is divisible by $l$, for $l$ as in Step 2. Let $f\colon Y\rightarrow X$ be a log resolution of
$(X,\Delta_i+A_i)$ and of $|m(K_X+\Delta_i+A_i)|$. Let $\Gamma=\B(X,\Delta_i+A_i)_Y$ and $E=K_Y+\Gamma-f^*(K_X+\Delta_i+A_i)$, and define
$$\Xi=\Gamma-\Gamma\wedge\textstyle\frac1m\Fix|m(K_Y+\Gamma)|.$$
We have that $m(K_Y+\Xi)$ is Cartier, $\Fix|m(K_Y+\Xi)|\wedge\Xi=0$ and $\Mob(m(K_Y+\Xi))$ is free. Since $\Fix|m(K_Y+\Xi)|+\Xi$ has simple
normal crossings support, it follows that $\B(K_Y+\Xi)$ contains no log canonical centres of $(Y,\lceil\Xi\rceil)$. Let
$T=f_*^{-1}S,\Gamma_T=(\Gamma-T)_{|T}$ and $\Xi_T=(\Xi-T)_{|T}$, and consider a section
$$\sigma\in H^0(T,\OO_T(m(K_T+\Xi_T)))=H^0(T,\mcal J_{\|m(K_T+\Xi_T)\|}(m(K_T+\Xi_T))).$$
By \cite[Theorem 5.3]{HM08}, there is an ample divisor $H$ on $Y$ such that if $\tau\in H^0(T,\OO_T(H))$, then $\sigma\cdot\tau$
is in the image of the homomorphism
$$H^0(Y,\OO_Y(m(K_Y+\Xi)+H))\rightarrow H^0(T,\OO_T(m(K_Y+\Xi)+H)).$$
Therefore
\begin{equation}\label{eq:inclusion}
|m(K_T+\Xi_T)|+m(\Gamma_T-\Xi_T)+H_{|T}\subset|m(K_Y+\Gamma)+H|_T.
\end{equation}
We claim that
\begin{equation}\label{eq:inequality}
\Omega_i+A_{i|S}\geq(f_{|T})_*\Xi_T\geq\Theta_i+A_{i|S}
\end{equation}
and so, as $(S,\Omega_i+A_{i|S})$ is canonical, we have
\begin{multline*}
|m(K_S+\Theta_i)|+m((f_{|T})_*\Xi_T-\Theta_i)\\
\subset|m(K_S+(f_{|T})_*\Xi_T)|=(f_{|T})_*|m(K_T+\Xi_T)|.
\end{multline*}
Pushing forward the inclusion \eqref{eq:inclusion}, we obtain \eqref{eq:6} for $H'=f_*H$.

We will now prove the inequality \eqref{eq:inequality} claimed above. We have $\Xi_T\leq\Gamma_T$ and $(f_{|T})_*\Gamma_T=\Omega_i+A_{i|S}$
and so the first inequality follows.

In order to prove the second inequality, let $P$ be any prime divisor on $S$ and let $P'=(f_{|T})^{-1}_*P$. Assume that
$P\subset\Supp\Omega_s$, and thus $P'\subset\Supp\Gamma_T$. Then there is a component $Q$ of the support of $\Gamma$ such that
$$\mult_{P'}\Fix|m(K_Y+\Gamma)|_T=\mult_Q\Fix|m(K_Y+\Gamma)|$$
and $\mult_{P'}\Gamma_T=\mult_Q\Gamma$. Therefore
$$\mult_{P'}\Xi_T=\mult_{P'}\Gamma_T-\min\{\mult_{P'}\Gamma_T,\mult_{P'}\textstyle\frac1m\Fix|m(K_Y+\Gamma)|_T\}.$$
Notice that $\mult_{P'}\Gamma_T=\mult_P(\Omega_i+A_{i|S})$ and since $E_{|T}$ is exceptional, we have that
$$\mult_{P'}\Fix|m(K_Y+\Gamma)|_T=\mult_P\Fix|m(K_X+\Delta_i+A_i)|_S.$$
Therefore $(f_{|T})_*\Xi_T=\Omega_i+A_{i|S}-\Omega_i\wedge\frac1m\Fix|m(K_X+\Delta_i+A_i)|_S$. The inequality now follows from Step 2.\\[2mm]
{\em Step 4.}
In this step we prove
\begin{equation}\label{eq:11}
|k_i(K_S+\Theta_i)|+k_i(\Omega_i-\Theta_i)\subset|k_i(K_X+\Delta_i)|_S.
\end{equation}
For any $\Sigma\in|k_i(K_S+\Theta_i)|$ and any $m>0$ sufficiently divisible, we may choose a divisor $G\in|m(K_X+\Delta_i)+mA_i+H|$
such that $G_{|S}=\frac{m}{k_i}\Sigma+m(\Omega_i-\Theta_i)+(mA_i+H)_{|S}$. If we define $\Lambda=\frac{k_i-1}{m}G+\Delta_i-S-A$, then
$$k_i(K_X+\Delta_i)\sim_\Q K_X+S+\Lambda+A_i-\textstyle\frac{k_i-1}{m}H,$$
where $A_i-\frac{k_i-1}{m}H$ is ample as $m\gg0$. By \cite[Lemma 4.4(3)]{HM08}, we have a surjective homomorphism
$$H^0(X,\mcal J_{S,\Lambda}(k_i(K_X+\Delta_i)))\rightarrow H^0(S,\mcal J_{\Lambda_{|S}}(k_i(K_X+\Delta_i))).$$
Since $(S,\Omega_i)$ is canonical, $(S,\Omega_i+\frac{k_i-1}{m}H_{|S})$ is klt as $m\gg0$, and therefore
$\mcal J_{\Omega_t+\frac{k_i-1}{m}H_{|S}}=\OO_S$. Since
\begin{multline*}
\Lambda_{|S}-(\Sigma+k_i(\Omega_i-\Theta_i))\\
=\textstyle\frac{k_i-1}{m}G_{|S}+\Omega_i-A_{|S}-(\Sigma+k_i(\Omega_i-\Theta_i))\leq\Omega_i+\frac{k_i-1}{m}H_{|S},
\end{multline*}
then by \cite[Lemma 4.3(3)]{HM08} we have $\mcal I_{\Sigma+k_i(\Omega_i-\Theta_i)}\subset\mcal J_{\Lambda_{|S}}$, and so
$$\Sigma+k_i(\Omega_i-\Theta_i)\in|k_i(K_X+\Delta_i)|_S,$$
which proves \eqref{eq:11}.\\[2mm]
{\em Step 5.}
There are ample divisors $A_n$ with $\Supp A_n\subset\Supp(\Delta_s-S)$
such that $\|A_n\|\rightarrow0$ and $\Delta_s+A_n$ are $\Q$-divisors. Observe that $\Theta_s^\sigma=\lim\limits_{n\rightarrow\infty}\Theta_n^\sigma$ with
$$\Theta_n^\sigma=\Omega_n-\Omega_n\wedge N_\sigma\|K_X+\Delta_n\|_S,$$
where $\Delta_n=\Delta_s+A_n$ and $\Omega_n=(\Delta_n-S)_{|S}$. Note that
$$N_\sigma\|K_X+\Delta_n\|_S=\sum\ord_P\|K_X+\Delta_n\|_S\cdot P$$
for all prime divisors $P$ on $S$ for all $n$, cf.\ Remark \ref{rem:11}. But then as in Step 3 of the proof of Theorem \ref{thm:2},
no component of $\Theta_n^\sigma$ is in $\B(K_S+\Theta_n^\sigma)$, and thus, by Property $\mcal L_A^G$ and since $\Theta_n^\sigma\geq\Theta_s^\sigma$
for every $n$, no component of $\Theta_s^\sigma$ is in
$\B(K_S+\Theta_s^\sigma)$. Since $k_i$ is divisible by $r$ and $\Theta_i\in W$, by \eqref{eq:11} we have
$$\Omega_i-\Theta_i\geq\Omega_i\wedge\textstyle\frac{1}{k_i}\Fix|k_i(K_X+\Delta_i)|_S\geq\Omega_i-\Theta_i^\sharp,$$
and so $\Theta_i^\sharp\geq\Theta_i$, where
$$\Theta_i^\sharp=\Omega_i-\Omega_i\wedge\liminf\limits_{m\rightarrow\infty}\textstyle\frac1m\Fix|m(K_X+\Delta_i)|_S.$$
Let $P$ be a prime divisor on $S$. If $\mult_P\Theta_s^\sigma=0$, then $\mult_P\Theta_s^\sharp=0$ since $\Theta_s^\sigma\geq\Theta_s^\sharp$
by Lemma \ref{lem:restrictedord}. Otherwise $\mult_P\Theta_i>0$ for all $i$ and thus $\mult_P\Theta_i^\sharp>0$.
Therefore by concavity we have
$$\mult_P\Theta_s^\sharp\geq\sum r_i\mult_P\Theta_i^\sharp\geq\sum r_i\mult_P\Theta_i=\mult_P\Theta_s^\sigma,$$
proving the claim from Step 1.\\[2mm]
{\em Step 6.}
Now let $C$ be an ample $\Q$-divisor such that $\Delta_t-\Delta_s+C$ is ample. Then by the claim from Step 1 and by Lemma \ref{lem:restrictedord},
\begin{multline*}
\Omega_s-\Theta_s^\sharp=\Omega_s\wedge\lim_{\varepsilon\downarrow0}\big(\sum\ord_P\|K_X+\Delta_s+\varepsilon(\Delta_t-\Delta_s+C)\|_S\cdot P\big)\\
\leq\Omega_s\wedge\lim_{\varepsilon\downarrow0}\big(\sum\ord_P\|K_X+\Delta_s+\varepsilon(\Delta_t-\Delta_s)\|_S\cdot P\big)\leq\Omega_s-\Theta_s^\sharp,
\end{multline*}
where the last inequality follows from convexity. Therefore that inequality is an equality, and this completes the proof.
\end{proof}

Now, let $Z$ be a prime divisor on $S$ and let $\mcal L_Z$ be the closure in $\mcal S_\R$ of the set $\{s\in\mcal S_\R:\mult_Z\Theta_s^\sharp>0\}$.
Then $\mcal L_Z$ is a closed cone. Let $\lambda_Z^\sharp\colon\mcal S_\R\rightarrow\R$ be the function given by
$\lambda_Z^\sharp(s)=\mult_Z\lambda^\sharp(s)$, and similarly for $\Theta_Z^\sharp$.

\begin{thm}\label{lem:PL}
For every prime divisor $Z$ on $S$, the map $\lambda_Z^\sharp$ is
rationally piecewise linear. Therefore, $\lambda^\sharp$ is rationally piecewise linear.
\end{thm}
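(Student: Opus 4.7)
The strategy is to prove the statement one prime $Z$ at a time; the final sentence then follows by taking a common refinement over the finitely many primes in $\bigcup_i\Supp\Omega_{e_i}\cup\Supp K_S$. Writing
\[
\lambda^\sharp(s)=t_sK_S+t_s\Omega_s-\bigl(t_s\Omega_s\bigr)\wedge\bigl(t_sF_s^\sharp\bigr),
\]
the first two summands are linear in $s$ since $t_s\Omega_s=\sum_{i=1}^\ell t_ik_i(\Delta_i-S)_{|S}$, so for each prime $Z$ on $S$ it suffices to prove that
\[
s\longmapsto \min\bigl(\mult_Z(t_s\Omega_s),\,\sigma_Z\|t_s(K_X+\Delta_s)\|_S\bigr)
\]
is rationally piecewise linear on $\mcal S_\R$. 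Off $\mcal L_Z$ this minimum equals its (linear) first argument, so the analysis reduces to $\mcal L_Z$, which I must first show is rational polyhedral.

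For the rational polyhedrality of $\mcal L_Z$ I would invoke Property $\mcal L_A^G$ on $S$ together with the observation made in Step 5 of the proof of Theorem \ref{lem:lipschitz} that $\mult_Z\Theta_s^\sharp>0$ forces $Z\not\subset\B(K_S+\Theta_s^\sharp)$. Combined with continuity of $s\mapsto\Theta_s^\sharp$ from Theorem \ref{lem:lipschitz} and its positive homogeneity of degree zero, this exhibits $\mcal L_Z$ as the cone pulled back from a face of the rational polytope furnished by Property $\mcal L_A^G$ on $S$, and rational polyhedrality follows.

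On $\mcal L_Z$ the relevant function is $\sigma_Z\|t_s(K_X+\Delta_s)\|_S$, a continuous sublinear function whose restriction to each rational ray is $\Q$-linear by positive homogeneity of degree one and rationality of $\Theta_s^\sharp$ at rational $s$ via \eqref{eq:restriction}. The main obstacle is to promote this data to a genuinely rational polyhedral piecewise linear structure: continuous sublinearity plus $\Q$-linearity on rational rays is not enough by itself, and one needs an additional source of local additivity. I would supply it by combining Lemma \ref{uniformbound}, Property $\mcal L_A^G$ on $S$, and the inductive hypothesis Theorem \ref{thm:main} in dimension $\dim X-1$, in the style of \cite[Section 3]{Laz07}. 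Concretely, the inductive Theorem \ref{thm:main} together with Proposition \ref{pro:1} applied to appropriate Cox rings on $S$ yields piecewise additivity of $\bMob$ after truncation, which pulls back via \eqref{eq:restriction} to a rational polyhedral decomposition of $\mcal L_Z$ on which $\lambda_Z^\sharp$ is linear with rational slopes.
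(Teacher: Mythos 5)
There is a genuine gap, in two places. First, your claim that $\mcal L_Z$ is rational polyhedral because it is ``the cone pulled back from a face of the rational polytope furnished by Property $\mcal L_A^G$ on $S$'' is circular: the pullback is along the map $s\mapsto\Theta_s^\sharp$, and transporting a rational polyhedral structure through that map requires knowing that the map is (rationally piecewise) linear --- which is precisely the statement being proven. In the paper the rational polyhedrality of $\mcal L_Z$ is obtained only at the \emph{end}, as a sublevel set of a function already shown to be rationally piecewise linear, and getting there requires a perturbation trick you do not have: one replaces $A$ by $A'\sim_\Q A-\eta\sum G_i$ so that $\mult_Z\tilde\Theta_s^\sharp=\mult_Z\Theta_s^\sharp+\eta$ on $\mcal L_Z$, which pushes $\mcal L_Z$ into the interior of the analogous cone $\tilde{\mcal L}_Z$ for the perturbed data and produces a cone $\mcal M_Z$ with $\mcal L_Z\subset\mcal M_Z\subset\tilde{\mcal L}_Z$ on which $\tilde\lambda_Z^\sharp$ is genuinely \emph{superlinear} (rather than a minimum of a linear function and a sublinear one, whose locus of switching is exactly the problematic boundary).

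Second, your proposed source of ``local additivity'' does not close the argument. Applying Theorem \ref{thm:main} in dimension $\dim S$ and Proposition \ref{pro:1} to Cox rings on $S$ yields piecewise additivity of $\bMob$ in the divisor variable \emph{on $S$}, i.e.\ as a function of $\Theta$; to convert this into a statement about the variable $s$ you must compose with $s\mapsto\Theta_s^\sharp$, and again that composition is the unknown. (Working upstairs instead would require finite generation of $R(X;D_1,\dots,D_\ell)$ in dimension $\dim X$, which is the theorem under proof.) The actual engine of the paper's proof is Theorem \ref{thm:linear1}: local linearity of $\lambda_Z^\sharp$ along rays, proved by Diophantine approximation (Lemmas \ref{lem:surround} and \ref{lem:approximation}), the extension statement of Theorem \ref{thm:inclusion}, and Lemma \ref{uniformbound}; this is combined with superlinearity on $\mcal M_Z$ and the two-plane reduction \cite[Lemma 3.8]{Laz07} to get piecewise linearity, after which rationality of the decomposition is extracted by a separate Diophantine argument. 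None of this machinery appears in your proposal, and ``in the style of \cite[Section 3]{Laz07}'' does not substitute for it.
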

\begin{proof}
Let $G_1,\dots,G_w$ be prime divisors on $X$ not equal to $S$ and $\Supp A$
such that $\Supp(\Delta_s-S-A)\subset\sum G_i$ for
every $s\in\mcal S$. Let $\nu=\max\{\mult_{G_i}\Delta_s:s\in\mcal S,i=1,\dots,w\}<1$, and let $0<\eta\ll1-\nu$ be a rational number such that
$A-\eta\sum G_i$ is ample. Let $A'\sim_\Q A-\eta\sum G_i$ be a general ample $\Q$-divisor. Define $\Delta_s'=\Delta_s-A+\eta\sum G_i+A'\geq0$,
and observe that $\Delta_s'\sim_\Q\Delta_s$, $\lfloor\Delta_s'\rfloor=S$ and $(S,(\Delta_s'-S)_{|S})$ is terminal.

Define the map $\chi\colon\mcal S\rightarrow\Div(X)$ by $\chi(s)=\kappa t_s(K_X+\Delta_s')$, for $\kappa$ sufficiently divisible.
Then as before, we can construct maps  $\tilde\Theta^\sharp\colon\mcal S_\R\rightarrow\Div(S)_\R$,
$\tilde\lambda^\sharp\colon\mcal S_\R\rightarrow\Div(S)_\R$ and $\tilde\lambda_Z^\sharp\colon\mcal S_\R\rightarrow\R$ associated to $\chi$.
By construction, $\ord_E\|\tilde\lambda_s^\sharp/\kappa t_s\|_S=\ord_E\|\lambda_s^\sharp/t_s\|_S$, and thus
$\mult_Z\tilde\Theta^\sharp_s=\mult_Z\Theta^\sharp_s+\eta$ for every $s\in\mcal L_Z$.
Let $\tilde{\mcal L}_Z$ be the closure in $\mcal S_\R$ of the set
$\{s\in\mcal S_\R:\mult_Z\tilde\Theta_s^\sharp>0\}$, and thus $\mcal L_Z$ is the closure in $\mcal S_\R$ of the set
$\{s\in\mcal S_\R:\mult_Z\tilde\Theta_s^\sharp>\eta\}$. Note that $\mult_Z\tilde\Theta_s^\sharp\geq\eta$ for every $s\in\mcal L_Z$ by
Theorem \ref{lem:lipschitz}. Now for every face $\mcal F$ of $\mcal S_\R$, either
$\mcal F\cap\mcal L_Z\subset\relint(\mcal F\cap\tilde{\mcal L}_Z)$ or
$\partial(\mcal F\cap\mcal L_Z)\cap\partial(\mcal F\cap\tilde{\mcal L}_Z)\subset\partial\mcal F$. Therefore by compactness
there is a rational polyhedral cone $\mcal M_Z$ such that $\mcal L_Z\subset\mcal M_Z\subset\tilde{\mcal L}_Z$,
and so the map $\tilde\lambda_Z^\sharp|_{\mcal M_Z}$ is {\em superlinear.}

By Theorem \ref{thm:linear1} below, for any $2$-plane $H\subset\R^\ell$ the map
$\tilde\lambda_Z^\sharp|_{\mcal M_Z\cap H}$ is piecewise linear, and thus
$\tilde\lambda_Z^\sharp|_{\mcal M_Z}$ is piecewise linear by \cite[Lemma 3.8]{Laz07}.

To prove that $\tilde\lambda_Z^\sharp|_{\mcal M_Z}$ is rationally piecewise linear, let $k=\dim\mcal M_Z$ and let $\mcal M_Z=\bigcup\mcal C_m$ be a
finite polyhedral decomposition such that $\tilde\lambda_Z^\sharp|_{\mcal C_m}$ is linear for every $m$.
Let $\mcal H$ be a hyperplane which contains a common $(k-1)$-dimensional face of cones $\mcal{C}_i$
and $\mcal{C}_j$ and assume $\mcal H$ is not rational.
By Step 1 of the proof of \cite[Lemma 3.5]{Laz07} there is a point $s\in\mcal{C}_i\cap\mcal{C}_j$ such that the minimal affine rational space
containing $s$ has dimension $k-1$.
Then as in Step 1 of the proof of Theorem \ref{thm:linear1} there is an $k$-dimensional cone $\widetilde{\mcal{C}}$ such that
$s\in\Int\widetilde{\mcal{C}}$ and the map $\tilde\lambda_Z^\sharp|_{\widetilde{\mcal{C}}}$ is linear.
But then the cones $\widetilde{\mcal{C}}\cap\mcal{C}_i$ and $\widetilde{\mcal{C}}\cap\mcal{C}_i$ are $k$-dimensional and linear
extensions of $\tilde\lambda_Z^\sharp|_{\mcal{C}_i}$ and $\tilde\lambda_Z^\sharp|_{\mcal{C}_j}$
coincide since they are equal to the linear extension of $\tilde\lambda_Z^\sharp|_{\widetilde{\mcal{C}}}$, a contradiction.
Therefore all $(k-1)$-dimensional faces of the cones $\mcal{C}_i$ belong to rational hyperplanes and thus $\mcal{C}_i$ are rational
cones.

Therefore the map $\tilde\lambda_Z^\sharp|_{\mcal M_Z}$ is rationally piecewise linear, and since $\mcal L_Z$ is the closure of the set
$\{s\in\mcal S_\R:\mult_Z\tilde\Theta_s^\sharp>\eta\}$, we have that $\mcal L_Z$ is a rational polyhedral cone,
the map $\tilde\lambda_Z^\sharp|_{\mcal L_Z}$ is rationally piecewise linear, and therefore so is $\lambda_Z^\sharp$. Now it is trivial that
$\lambda^\sharp$ is a rationally piecewise linear map.
\end{proof}
Thus it remains to prove that $\lambda^\sharp_Z|_{\mcal M_Z\cap H}$ is piecewise linear for every $2$-plane $H\subset\R^\ell$.
As in Step 1 of the proof of Theorem \ref{thm:2}, by replacing $\mcal S_\R$ by $\mcal M_Z$ and $\lambda^\sharp_Z$ by $\tilde\lambda^\sharp_Z$,
it is enough to assume, and I will until the end of the section, that $\lambda_Z^\sharp$
is a superlinear function on $\mcal S_\R$ for a fixed prime divisor $Z$ on $S$.

Let $C_s$ be a local Lipschitz constant of $\Theta^\sharp$ around $s\in\mcal S_\R$ in the smallest rational affine space containing $s$.
For every $s\in\mcal S$, let $\phi_s$ be the smallest coefficient of $\Omega_s-\Theta_s^\sharp$.

\begin{thm}\label{thm:inclusion}
Fix $s\in\mcal S_\R$ and let $U\subset\R^\ell$ be the smallest rational affine subspace containing s.
If $\phi_s>0$, let $0<\delta\ll1$ be a rational number such that $\phi_u>0$ for $u\in U$ with
$\|u-s\|\leq\delta$, set $\phi=\min\{\phi_u:u\in U,\|u-s\|\leq\delta\}$ and let $0<\varepsilon\ll\delta$ be a rational number such that
$(C_s/\phi+1)\varepsilon(K_X+\Delta_s)+A$ is ample.
If $\phi_s=0$ and $\Supp\Delta_s=\sum F_i$, let $0<\varepsilon\ll1$ be a rational number such that $\sum f_iF_i+A$ is ample
for any $f_i\in(-\varepsilon,\varepsilon)$, and set $\phi=1$. Let $t\in U\cap\mcal S_\Q$ and $k_t\gg0$ be an integer such that
$\|t-s\|<\varepsilon/k_t$, $k_t\Delta_t/r$ is Cartier for $r$ as in Lemma \ref{uniformbound} and $S\not\subset\B(K_X+\Delta_t)$.
Then for any divisor $\Theta$ on $S$ such that $\Theta\leq\Omega_t$, $\|\Theta-\Theta_s^\sharp\|<\phi\varepsilon/k_t$ and $k_t\Theta/r$ is Cartier we have
$$|k_t(K_S+\Theta)|+k_t(\Omega_t-\Theta)\subset|k_t(K_X+\Delta_t)|_S.$$
\end{thm}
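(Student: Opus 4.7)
\textbf{The plan} is to apply Theorem~\ref{thm:hmck} directly to the plt pair $(X,\Delta_t)$ with $\Phi=\Omega_t-\Theta$ and $k=k_t$; the conclusion $|k_t(K_S+\Omega_t-\Phi)|+k_t\Phi\subset|k_t(K_X+\Delta_t)|_S$ then matches exactly the desired inclusion. Three hypotheses must be verified: (a) a rational $\varepsilon^*>0$ with $\varepsilon^*(K_X+\Delta_t)+A$ ample; (b) $k_t\Delta_t$ and $k_t\Phi$ Cartier; (c) the sandwich $\Omega_t\wedge(1-\varepsilon^*/k_t)F\leq\Phi\leq\Omega_t$, where $F=\liminf_{m\to\infty}\tfrac{1}{m}\Fix|m(K_X+\Delta_t)|_S$. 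Note that $S\not\subset\B(K_X+\Delta_t)$ is part of the standing assumptions.

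For (a), I take $\varepsilon^*=(C_s/\phi+1)\varepsilon$ in the case $\phi_s>0$; writing
\[
\varepsilon^*(K_X+\Delta_t)+A=\bigl(\varepsilon^*(K_X+\Delta_s)+A\bigr)+\varepsilon^*(\Delta_t-\Delta_s),
\]
the first summand is ample by hypothesis and the second has norm bounded by a universal constant times $\|t-s\|<\varepsilon/k_t$, which is negligible for $k_t\gg0$; in the case $\phi_s=0$ the stronger ampleness of $\sum f_iF_i+A$ for $|f_i|<\varepsilon$ absorbs the perturbation analogously. Condition (b) is immediate from the Cartier hypotheses on $k_t\Delta_t/r$ and $k_t\Theta/r$, together with $k_t\Omega_t=(k_t\Delta_t-k_tS)_{|S}$ being a Cartier restriction, so $k_t\Phi=k_t\Omega_t-k_t\Theta$ is Cartier.

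The bulk of the proof, and \textbf{the main obstacle}, is the sandwich (c), which I verify componentwise on primes $P$ of $S$. Theorem~\ref{lem:lipschitz} together with the Lipschitz constant $C_s$ yields $\|\Theta_t^\sharp-\Theta_s^\sharp\|\leq C_s\|t-s\|<C_s\varepsilon/k_t$, so combining with $\|\Theta-\Theta_s^\sharp\|<\phi\varepsilon/k_t$ one obtains $|\mult_P\Theta-\mult_P\Theta_t^\sharp|<(\phi+C_s)\varepsilon/k_t$. If $\mult_PF\leq\mult_P\Omega_t$ then $\mult_P\Theta_t^\sharp=\mult_P\Omega_t-\mult_PF$, and (c) at $P$ reduces to $\mult_P\Theta\leq\mult_P\Theta_t^\sharp+(\varepsilon^*/k_t)\mult_PF$; this follows because $\mult_PF\geq\phi$ whenever positive (by the definition of $\phi$ as a uniform lower bound on positive coefficients of $\Omega_u-\Theta_u^\sharp$ for $u$ near $s$), and the choice $\varepsilon^*=(C_s/\phi+1)\varepsilon$ gives $(\varepsilon^*/k_t)\phi=(\phi+C_s)\varepsilon/k_t$, closing the estimate. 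If $\mult_PF>\mult_P\Omega_t$ then $\mult_P\Theta_t^\sharp=0$, and the Cartier integrality $\mult_P\Theta\in\tfrac{r}{k_t}\Z$ combined with $|\mult_P\Theta|<(\phi+C_s)\varepsilon/k_t$ forces $\mult_P\Theta=0$, provided $\varepsilon$ is small enough that $(\phi+C_s)\varepsilon<r$, which is available since $\varepsilon\ll\delta$. The right inequality $\Phi\leq\Omega_t$, equivalently $\Theta\geq0$, is handled by the same integrality argument at primes with $\mult_P\Theta_s^\sharp=0$, and by $\mult_P\Theta>\mult_P\Theta_s^\sharp-\phi\varepsilon/k_t>0$ at primes where $\mult_P\Theta_s^\sharp$ is bounded below by the smallest positive coefficient of $\Theta_s^\sharp$ (for $k_t$ large). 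The main difficulty is this delicate reconciliation of the analytic Lipschitz estimate with the arithmetic Cartier denominator $r$ from Lemma~\ref{uniformbound}, made possible precisely by the smallness of $\varepsilon$ relative to $\delta$.
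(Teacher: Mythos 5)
Your plan --- feeding $\Phi=\Omega_t-\Theta$ directly into Theorem \ref{thm:hmck} --- is not the route the paper takes, and it hits a genuine obstruction in the case $\phi_s=0$. The hypothesis of Theorem \ref{thm:hmck} is a lower bound $\Omega_t\wedge(1-\frac{\varepsilon^*}{k_t})F\leq\Phi$ in terms of $F=F_t^\sharp=\liminf_m\frac1m\Fix|m(K_X+\Delta_t)|_S$, the asymptotic fixed part of the \emph{unperturbed} system at $t$. When $\phi_s>0$ your componentwise check is essentially sound: wherever $\mult_P F_t^\sharp$ is positive and $\leq\mult_P\Omega_t$ it equals a positive coefficient of $\Omega_t-\Theta_t^\sharp$, hence is $\geq\phi_t\geq\phi$, and your choice $\varepsilon^*=(C_s/\phi+1)\varepsilon$ closes the estimate. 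But when $\phi_s=0$ the value $\phi=1$ is a mere placeholder in the statement; it is \emph{not} a lower bound on the positive coefficients of $\Omega_t-\Theta_t^\sharp$, so your key inequality ``$\mult_P F\geq\phi$ whenever positive'' is unjustified and in general false. Concretely, $\phi_s=0$ means $\Theta_s^\sharp=\Omega_s$, so the hypotheses permit $\Theta$ with $\mult_P\Theta=\mult_P\Omega_t$ at some component $P$; if $\mult_P F_t^\sharp>0$ there --- which can happen for $t\neq s$, since Theorem \ref{lem:lipschitz} gives only continuity of $\Theta^\sharp$ at $s$ with no quantitative rate, and Step 2 of the proof of Theorem \ref{thm:linear1} is precisely about points $s$ with $\phi_s=0$ but $\phi_v>0$ arbitrarily close by --- then $\mult_P\Phi=0<\mult_P\big(\Omega_t\wedge(1-\frac{\varepsilon^*}{k_t})F_t^\sharp\big)$ and the sandwich fails. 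No choice of $\varepsilon$ repairs this, because $F_t^\sharp$ is not controlled by any hypothesis of the theorem; your integrality trick only disposes of the case $\mult_P F_t^\sharp>\mult_P\Omega_t$, not of small positive $\mult_P F_t^\sharp$.

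This is exactly why the paper does not invoke Theorem \ref{thm:hmck} here. Its proof instead establishes $\mult_P\big(\Omega_t\wedge\frac1l\Fix|l(K_X+\Delta_t+A/k_t)|_S\big)\leq\mult_P(\Omega_t-\Theta)$ for the \emph{perturbed} system: the fixed ample increment $A/k_t$ makes $\Delta_t-\Delta_s+A/k_t$ ample, so one can slide back to $s$ and use $\ord_P\|K_X+\Delta_s\|_S=0$ via Lemma \ref{lem:restrictedord} in the $\phi_s=0$ case, and a scaling/convexity estimate in the $\phi_s>0$ case. The conclusion is then extracted as in Steps 3 and 4 of the proof of Theorem \ref{lem:lipschitz}: lifting from a log resolution via \cite[Theorem 5.3]{HM08} with an auxiliary ample $H$, then removing the twist $mA/k_t+H$ by a Nadel-vanishing/multiplier-ideal argument --- machinery that only needs control of the perturbed fixed part. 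To make your argument work you would have to replace the direct appeal to Theorem \ref{thm:hmck} by this perturbed-extension mechanism, at least when $\phi_s=0$.
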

\begin{proof}
Set $A_t=A/k_t$. I first prove that for any component $P\in\Supp\Omega_s$, and for any $l>0$ sufficiently divisible, we have
\begin{equation}\label{equ:2}
\mult_P(\Omega_t\wedge\textstyle\frac1l\Fix|l(K_X+\Delta_t+A_t)|_S)\leq\mult_P(\Omega_t-\Theta).
\end{equation}
Assume first that $\phi_s=0$. Then in particular $\ord_P\|K_X+\Delta_s\|_S=0$ and $\Delta_t-\Delta_s+A_t$ is ample since
$\|\Delta_t-\Delta_s\|<\varepsilon/k_t$, so
\begin{align*}
\ord_P\|K_X+\Delta_t+A_t\|_S&=\ord_P\|K_X+\Delta_s+(\Delta_t-\Delta_s+A_t)\|_S\\
&\leq\ord_P\|K_X+\Delta_s\|_S=0.
\end{align*}
Since for $l$ sufficiently divisible we have
\begin{equation}\label{equ:3}
\mult_P\textstyle\frac1l\Fix|l(K_X+\Delta_t+A_t)|_S=\ord_P\|K_X+\Delta_t+A_t\|_S
\end{equation}
as in Step 3 of the proof of Theorem \ref{thm:2}, we obtain \eqref{equ:2}.

Now assume that $\phi_s\neq0$ and set $C=C_s/\phi$. By Lipschitz continuity we have $\|\Theta_t^\sharp-\Theta_s^\sharp\|<C\phi\varepsilon/k_t$, so
$\|\Theta_t^\sharp-\Theta\|<(C+1)\phi\varepsilon/k_t$. Therefore it suffices to show that
$$\mult_P(\Omega_t\wedge\textstyle\frac1l\Fix|l(K_X+\Delta_t+A_t)|_S)\leq(1-\frac{C+1}{k_t}\varepsilon)\mult_P(\Omega_t-\Theta_t^\sharp).$$
Since $k_t\gg0$, we can choose a rational number $\eta>(C+1)\varepsilon/k_t$ such that $\eta(K_X+\Delta_t)+A_t$ is ample. From
$$K_X+\Delta_t+A_t=(1-\eta)(K_X+\Delta_t)+(\eta(K_X+\Delta_t)+A_t),$$
we have
$$\ord_P\|K_X+\Delta_t+A_t\|_S\leq(1-\eta)\ord_P\|K_X+\Delta_t\|_S,$$
and thus by \eqref{equ:3},
$$\mult_P\textstyle\frac1l\Fix|l(K_X+\Delta_t+A_t)|_S\leq(1-\frac{C+1}{k_t}\varepsilon)\ord_P\|K_X+\Delta_t\|_S$$
for $l$ sufficiently divisible.

Now the theorem follows as in Steps 3 and 4 of the proof of Theorem \ref{lem:lipschitz}.
\end{proof}

Finally, we have

\begin{thm}\label{thm:linear1}
Fix $s\in\mcal{S}_\R$ and let $R$ be a ray in $\mcal{S}_\R$ not containing $s$.
Then there exists a ray $R'\subset\R_+s+R$ not containing $s$ such that the map $\lambda_Z^\sharp|_{\R_+s+R'}$ is linear.
In particular, for every $2$-plane $H\subset\R^\ell$, the map $\lambda_Z^\sharp|_{\mcal S_\R\cap H}$ is piecewise linear.
\end{thm}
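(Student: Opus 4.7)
The concluding ``In particular'' assertion follows from the first part by a compactness argument: in the $2$-plane $H$, the main statement supplies a linearity wedge around every ray of $\mcal S_\R \cap H$, and finitely many such wedges cover $\mcal S_\R \cap H$. I therefore focus on the main assertion.

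The strategy is to combine the superlinearity of $\lambda_Z^\sharp$, which automatically gives one inequality, with the opposite inequality extracted from Theorem \ref{thm:inclusion} applied at carefully chosen rational approximants of $s$, forcing local additivity. Fix a rational $r \in R$ and small parameters $0 < \varepsilon, \eta \ll 1$. First I pick a rational $w_1$ in the open cone $\R_+ s + R$ close to $s$ in the direction of $r$, with $k_1 w_1$ integral and $\|s - w_1\| < \varepsilon/k_1$. Applying Lemma \ref{lem:approximation} yields $w_2 \in \Q^n$, $k_2 \in \N$, and a vector $\xi$ with $\|\xi\| < \eta/(k_1+k_2)$, $k_2 w_2$ integral, $\|s - w_2\| < \varepsilon/k_2$, and
\[
s \;=\; \tfrac{k_1}{k_1+k_2}\, w_1 + \tfrac{k_2}{k_1+k_2}\, w_2 + \xi.
\]
Enlarging $k_i$ to a common divisible multiple, I ensure that the Cartier and divisibility hypotheses of Theorem \ref{thm:inclusion} are met at each $w_i$ with $\Theta = \Theta_s^\sharp$.

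With $\varepsilon, \eta$ sufficiently small, Theorem \ref{thm:inclusion} then yields
\[
|k_i(K_S + \Theta_s^\sharp)| + k_i(\Omega_{w_i} - \Theta_s^\sharp) \;\subset\; |k_i(K_X + \Delta_{w_i})|_S, \quad i = 1,2.
\]
Multiplying sections and writing $s^\star = k_1 w_1 + k_2 w_2 \in \mcal S$, I obtain
\[
|(k_1+k_2)(K_S + \Theta_s^\sharp)| + k_1(\Omega_{w_1} - \Theta_s^\sharp) + k_2(\Omega_{w_2} - \Theta_s^\sharp) \;\subset\; |t_{s^\star}(K_X + \Delta_{s^\star})|_S.
\]
Since no component of $\Theta_s^\sharp$ lies in $\B(K_S + \Theta_s^\sharp)$ (Step 5 of the proof of Theorem \ref{lem:lipschitz}), Lemma \ref{uniformbound} gives $\mult_Z \Fix|(k_1+k_2)(K_S + \Theta_s^\sharp)| = 0$. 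Comparing $\mult_Z$ of the fixed parts and using the definition of $\Theta^\sharp$ together with the Lipschitz continuity from Theorem \ref{lem:lipschitz}, this translates into
\[
\lambda_Z^\sharp(s^\star) \;\leq\; k_1 \lambda_Z^\sharp(w_1) + k_2 \lambda_Z^\sharp(w_2) + O(\eta),
\]
the error $O((k_1+k_2)\|\xi\|) = O(\eta)$ being absorbed by the approximation bound.

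Superlinearity supplies the reverse inequality $\lambda_Z^\sharp(s^\star) \geq k_1 \lambda_Z^\sharp(w_1) + k_2 \lambda_Z^\sharp(w_2)$, and letting $\eta \to 0$ forces equality. By the standard concavity principle that a positively homogeneous superlinear function which is additive at one interior pair is linear on the $2$-dimensional cone those vectors generate, $\lambda_Z^\sharp$ is linear on $\R_+ w_1 + \R_+ w_2$; via Lipschitz continuity of $\Theta^\sharp$ this propagates to linearity on $\R_+ s + R'$ with $R' = \R_+ w_1 \subset \R_+ s + R$. The main obstacle will be the interplay between controlling the error term $O(\eta)$ uniformly and preserving the Cartier and divisibility conditions across the family of approximants; this is precisely what the sharp coupling $\|\xi\| \leq \eta/(k_1+k_2)$ in Lemma \ref{lem:approximation} is designed to make possible, while Lemma \ref{uniformbound} gives the uniformity needed to trivialise the $Z$-component of $\Fix|(k_1+k_2)(K_S + \Theta_s^\sharp)|$.
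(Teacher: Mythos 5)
Your overall shape (approximate $s$ by two rational points straddling it via Lemma \ref{lem:approximation}, get additivity from Theorem \ref{thm:inclusion} plus superlinearity, conclude linearity on a $2$-dimensional cone) is in the right family, but the central step is a misapplication of Theorem \ref{thm:inclusion}, and this is not a repairable technicality. Theorem \ref{thm:inclusion} only applies to approximants $t$ lying in $U\cap\mcal S_\Q$, where $U$ is the \emph{smallest rational affine subspace containing $s$}. Your $w_1$ is chosen in the direction of the ray $R$, hence transverse to $U$ in general; in the extreme (and essential) case where $s$ is rational, $U=\{s\}$ and no $w_1\neq s$ is admissible at all. The whole difficulty of the theorem is precisely to handle directions leaving $U$: the paper first proves linearity of $\Theta^\sharp$ near $s$ \emph{inside} $U$ (Step 1), then for a rational $s$ perturbs to a nearby point $t\in(s,u]$ whose smallest rational affine space is the span $W$ of $s$ and $u$, applies Lemma \ref{lem:approximation} at $t$ with $w_1=s$, and treats the two ``endpoint'' approximants $t_1=s$ and $t_n$ by separate ad hoc arguments (the key inequality \eqref{equ:4} for $t_n$ is proved by hand using the ampleness of $\Delta_t-\Delta_{t_1}-\frac{k_{t_1}+k_{t_n}}{k_{t_1}}\Psi+A$, and $S\not\subset\B(K_X+\Delta_{t_n})$, $Z\not\subset\B(K_S+\Theta_{t_n}')$ are checked via Property $\mcal L_A^G$); irrational $s$ is then reduced to the rational case via Step 1 and a cone-juggling argument. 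None of this is visible in your plan.

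Two further gaps. First, you take $\Theta=\Theta_s^\sharp$ in Theorem \ref{thm:inclusion}, but $\Theta_s^\sharp$ need not be rational when $s$ is irrational, so the hypothesis that $k_i\Theta/r$ be Cartier fails; the paper instead uses rational approximants $\Theta_{t_i}'$ in the smallest rational affine space $V$ containing $\Theta_s^\sharp$ (or $\Theta_t^\sharp$), chosen simultaneously with the $t_i$. Second, the closing limit ``$\eta\to0$ forces equality'' is not valid as stated: as $\eta\to0$ the data $(w_2,k_2)$ change, so you are not passing to a limit of inequalities about a fixed pair of rays, and Theorem \ref{lem:lipschitz} provides only continuity along segments, not a Lipschitz bound (the paper explicitly has to argue around the possible failure of local Lipschitz continuity at $s$ in Step 3). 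Your compactness argument for the ``in particular'' clause does match the paper's Step 5.
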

\begin{proof}
{\em Step 1.}
Let $U\subset\R^\ell$ be the smallest rational affine space containing $s$. In this step I prove that the map
$\Theta^\sharp$ is linear in a neighbourhood of $s$ contained in $U$.

Let $\varepsilon$ and $\phi$ be as in Theorem \ref{thm:inclusion}.
Let $W\subset\R^\ell$ and $V\subset\Div(S)_\R$ be the smallest rational affine spaces containing $s$ and $\Theta_s^\sharp$ respectively,
and let $r$ be as in Lemma \ref{uniformbound}. By Lemma \ref{lem:surround}, there exist rational points
$(t_i,\Theta_{t_i}')\in W\times V$ and integers $k_{t_i}\gg0$ such that:
\begin{enumerate}
\item we may write $s=\sum r_{t_i}t_i$, $\Delta_s=\sum r_{t_i}\Delta_{t_i}$ and $\Theta_s^\sharp=\sum r_{t_i}\Theta_{t_i}'$, where
$r_{t_i}>0$ and $\sum r_{t_i}=1$,
\item $k_{t_i}\Delta_{t_i}/r$ are integral and $\|s-t_i\|<\varepsilon/k_{t_i}$,
\item $k_{t_i}\Theta_{t_i}'/r$ are integral, $\|\Theta_s^\sharp-\Theta_{t_i}'\|<\phi\varepsilon/k_{t_i}$ and note that $\Theta_{t_i}'\leq\Omega_{t_i}$
since $k_{t_i}\gg0$ and $(t_i,\Theta_{t_i}')\in W\times V$.
\end{enumerate}
Observe that $S\not\subset\B(K_X+\Delta_{t_i})$ since $t_i\in W$ for every $i$ and $\varepsilon\ll1$ by Property $\mcal L_A^G$. By local Lipschitz
continuity and by Theorem \ref{thm:inclusion} we
have that
$$|k_{t_i}(K_S+\Theta_{t_i}')|+k_{t_i}(\Omega_{t_i}-\Theta_{t_i}')\subset|k_{t_i}(K_X+\Delta_{t_i})|_S.$$
Since $\Theta_{t_i}'\in V$ and $k_{t_i}\Theta_{t_i}'/r$ is Cartier, no component of $\Theta_{t_i}'$ is in $\Fix|k_{t_i}(K_S+\Theta_{t_i}')|$
for every $i$ by Lemma \ref{uniformbound}. In particular,
$$\Omega_{t_i}-\Theta_{t_i}'\geq\Omega_{t_i}\wedge\textstyle\frac{1}{k_{t_i}}\Fix|k_{t_i}(K_X+\Delta_{t_i})|_S\geq\Omega_{t_i}-\Theta_{t_i}^\sharp,$$
and so
$$\Theta_{t_i}^\sharp\geq\Theta_{t_i}'.$$
But by assumption (1) and since the map $\Theta_Z^\sharp$ is concave, we have
$$\Theta_Z^\sharp(s)\geq\sum r_{t_i}\Theta_Z^\sharp(t_i)\geq\sum r_{t_i}\mult_Z\Theta_{t_i}'=\Theta_Z^\sharp(s),$$
which proves the statement by \cite[Lemma 2.6]{Laz07}.\\[2mm]
{\em Step 2.}
Now assume $s\in\mcal S_\Q$, $\phi_s=0$ and fix $u\in R$ such that $s$ and $u$ belong to a rational affine subspace $\mcal P$ of $\R^\ell$.
Let $\Delta\colon\bigoplus_{i=1}^\ell\R e_i\rightarrow\Div(X)_\R$ be a linear map given by $\Delta(p_i)=\Delta_{p_i}$ for linearly independent
points $p_1,\dots,p_\ell\in\mcal P\cap\mcal S_\Q$, and then extended linearly. Observe that $\Delta(p)=\Delta_p$ for every $p\in\mcal P\cap\mcal S_\R$.

Let $W$ be the smallest rational affine subspace containing $s$ and $u$.
If there is a sequence $s_n\in(s,u]$ such that $\lim_{n\rightarrow\infty}s_n=s$ and $\phi_{s_n}=0$,
then $\lambda^\sharp$ is linear on the cone $\R_+s+\R_+s_1$ by \cite[Lemma 2.6]{Laz07}.

Therefore we can assume that there are rational numbers $0<\varepsilon,\eta\ll1$ such that for all $v\in[s,u]$ with $0<\|v-s\|<2\varepsilon$ we have
$\phi_v>0$, that for every prime divisor $P$ on $S$, we have either $\mult_P\Omega_v>\mult_P\Theta_v^\sharp$ or
$\mult_P\Omega_v=\mult_P\Theta_v^\sharp$ and either $\mult_P\Theta_v^\sharp=0$ or $\mult_P\Theta_v^\sharp>0$
for all such $v$, and that $\Delta_v-\Delta_s+\Xi+A$ is ample for all such $v$ and for any divisor $\Xi$ such that
$\Supp\Xi\subset\Supp\Delta_s\cup\Supp\Delta_u$ and $\|\Xi\|<\eta$.

Pick $t\in(s,u]$ such that $\|s-t\|<\varepsilon/k_s$, $k_ss$ is integral and the smallest rational affine subspace containing $t$ is precisely $W$.
Let $0<\delta\ll1$ be a rational number such that $\phi_v>0$ for
$v\in W$ with $\|v-t\|\leq\delta$, set $\phi=\min\{\phi_v:v\in W,\|v-t\|\leq\delta\}$ and let $0<\xi\ll\min\{\delta,\varepsilon\}$ be a
rational number such that $(C_t/\phi+1)\xi(K_X+\Delta_t)+A$ is ample.
Denote by $V\subset\Div(S)_\R$ the smallest rational affine space containing $\Theta_s^\sharp=\Omega_s$ and $\Theta_t^\sharp$, and let $r$ be as in Lemma
\ref{uniformbound}. Then by Lemma \ref{lem:approximation} there exist rational points
$(t_i,\Theta_{t_i}')\in W\times V$ and integers $k_{t_i}\gg0$ such that:
\begin{enumerate}
\item we may write $t=\sum r_{t_i}t_i$, $\Delta_t=\sum r_{t_i}\Delta_{t_i}$ and $\Theta_t^\sharp=\sum r_{t_i}\Theta_{t_i}'$, where
$r_{t_i}>0$ and $\sum r_{t_i}=1$,
\item $t_1=s$, $\Theta_{t_1}'=\Theta_{t_1}^\sharp=\Omega_{t_1}$, $k_{t_1}=k_s$,
\item $k_{t_i}\Delta_{t_i}/r$ are integral and $\|t-t_i\|<\xi/k_{t_i}$ for $i=2,\dots,n-1$,
\item $\Theta_{t_i}'\leq\Omega_{t_i}$, $k_{t_i}\Theta_{t_i}'/r$ are integral, $\|\Theta_t^\sharp-\Theta_{t_i}'\|<\phi\xi/k_{t_i}$
and $(t_i,\Theta_{t_i}')$ belong to the smallest rational affine space containing $(t,\Theta_t^\sharp)$ for $i=2,\dots,n-1$,
\item $\Delta_t=\frac{k_{t_1}}{k_{t_1}+k_{t_n}}\Delta_{t_1}+\frac{k_{t_n}}{k_{t_1}+k_{t_n}}\Delta_{t_n}+\Psi$,
where $k_{t_n}\Delta_{t_n}/r$ is integral, $\|t-t_n\|<\varepsilon/k_{t_n}$ and $\|\Psi\|<\eta/(k_{t_1}+k_{t_n})$,
\item $\Theta^\sharp_t=\frac{k_{t_1}}{k_{t_1}+k_{t_n}}\Theta_{t_1}'+\frac{k_{t_n}}{k_{t_1}+k_{t_n}}\Theta_{t_n}'+\Phi$,
where $\Theta_{t_n}'\leq\Omega_{t_n}$, $k_{t_n}\Theta_{t_n}'/r$ is integral, $\|\Theta_t^\sharp-\Theta_{t_n}'\|<\varepsilon/k_{t_n}$
and $\|\Phi\|<\eta/(k_{t_1}+k_{t_n})$.
\end{enumerate}
Observe also that $\Supp\Psi\subset\Supp\Delta_t$ and $\Supp\Phi\subset\Supp\Theta_t^\sharp$ by Remarks \ref{rem:2} and \ref{rem:3} applied
to the linear map $\Delta$ defined at the beginning of Step 2. Then by Theorem \ref{thm:inclusion},
$$|k_{t_i}(K_S+\Theta_{t_i}')|+k_{t_i}(\Omega_{t_i}-\Theta_{t_i}')\subset|k_{t_i}(K_X+\Delta_{t_i})|_S$$
for $i=2,\dots,n-1$. Let $P$ be a component in $\Supp\Omega_t$ and denote $A_{t_n}=A/k_{t_n}$. I claim that
\begin{equation}\label{equ:4}
\mult_P(\Omega_{t_n}\wedge\textstyle\frac1l\Fix|l(K_X+\Delta_{t_n}+A_{t_n})|_S)\leq\mult_P(\Omega_{t_n}-\Theta_{t_n}')
\end{equation}
for $l\gg0$ sufficiently divisible. Assume first that $\mult_P\Theta_t^\sharp=0$. Then $\mult_P\Theta_s^\sharp=0$
by the choice of $\varepsilon$, and thus $\mult_P\Theta_{t_n}'=0$ since $\Theta_{t_n}'\in V$. Therefore
\begin{multline*}
\mult_P(\Omega_{t_n}\wedge\textstyle\frac1l\Fix|l(K_X+\Delta_{t_n}+A_{t_n})|_S)\\
\leq\mult_P\Omega_{t_n}=\mult_P(\Omega_{t_n}-\Theta_{t_n}').
\end{multline*}
Now assume that $\mult_P\Theta_t^\sharp>0$. Then for $l$ sufficiently divisible we have
$$\mult_P\textstyle\frac1l\Fix|l(K_X+\Delta_{t_n}+A_{t_n})|_S=\ord_P\|K_X+\Delta_{t_n}+A_{t_n}\|_S$$
as in Step 3 of the proof of Theorem \ref{thm:2}, and since $\Delta_t-\Delta_{t_1}-\frac{k_{t_1}+k_{t_n}}{k_{t_1}}\Psi+A$ is ample by the choice
of $\eta$,
\begin{align*}
\mult_P(\Omega_{t_n}&\wedge\textstyle\frac1l\Fix|l(K_X+\Delta_{t_n}+A_{t_n})|_S)\leq\ord_P\|K_X+\Delta_{t_n}+A_{t_n}\|_S\\
&=\ord_P\big\|K_X+\Delta_t+\textstyle\frac{k_{t_1}}{k_{t_n}}\big(\Delta_t-\Delta_{t_1}-\frac{k_{t_1}+k_{t_n}}{k_{t_1}}\Psi+A\big)\big\|_S\\
&\leq\ord_P\|K_X+\Delta_t\|_S=\mult_P(\Omega_t-\Theta_t^\sharp).
\end{align*}
Combining assumptions (5) and (6) above we have
$$\textstyle\Omega_t-\Theta_t^\sharp\leq\Omega_t-\Theta_t^\sharp+\frac{k_{t_1}}{k_{t_n}}\big(\Omega_t-\Theta_t^\sharp-
\frac{k_{t_1}+k_{t_n}}{k_{t_1}}(\Psi_{|S}-\Phi)\big)=\Omega_{t_n}-\Theta_{t_n}',$$
and \eqref{equ:4} is proved.
Furthermore, we can choose $\varepsilon\ll1$ and $k_{t_n}\gg0$ such that $S\not\subset\B(K_X+\Delta_{t_n})$. Otherwise, if we denote
$\mcal Q=\{p\in\mcal S_\R:S\not\subset\B(K_X+\Delta_p)\}$, $\mcal Q$ is a rational polyhedral cone by Property $\mcal L_A^G$,
and $t\in\partial\mcal Q$ for every $t\in[s,u]$ with $0<\|t-s\|\ll1$, and thus $s\in\partial\mcal Q$. But then
for $0<\|t-s\|\ll1$, $s$ and $t$ belong to the same face of $\mcal Q$, and so does $t_n$, a contradiction. Therefore
as in the proof of Theorem \ref{lem:lipschitz} we have
$$|k_{t_n}(K_S+\Theta_{t_n}')|+k_{t_n}(\Omega_{t_n}-\Theta_{t_n}')\subset|k_{t_n}(K_X+\Delta_{t_n})|_S.$$
Denote $\sum G_j=\Supp(\Omega_s-A_{|S})\cup\Supp(\Omega_u-A_{|S})$, and let
$\mcal Q'=\{\Xi\in\sum_j[0,1]G_j:Z\not\subset\B(K_S+\Xi+A_{|S})\}$. Then by Property $\mcal L_A^G$, $\mcal Q'$ is a rational
polytope and $\Theta_p^\sharp\in\mcal Q'$ for every $p\in\mcal S_\R$.
Therefore as above and by Theorem \ref{lem:lipschitz}, if $\varepsilon\ll1$ then $Z\not\subset\B(K_S+\Theta_{t_n}')$, and as in Step 1
we have that $\lambda_Z^\sharp$ is linear on the cone $\sum_{i=1}^n\R_+t_i$, and in particular on the cone $\R_+s+\R_+t$.\\[2mm]
{\em Step 3.}
Assume now that $s\in\mcal S_\Q$, $\phi_s>0$ and fix $u\in R$. Let again $W$ be the smallest rational affine space containing $s$ and $u$.
Let $0<\xi\ll1$ be a rational number such that $\phi_v>0$ for $v\in[s,u]$ with $\|v-s\|\leq2\xi$, that for every prime divisor $P$ on $S$ we have
either $\mult_P\Omega_v>\mult_P\Theta_v^\sharp$ or $\mult_P\Omega_v=\mult_P\Theta_v^\sharp$ for all such $v$, and let
$\phi=\min\{\phi_v:v\in[s,u],\|v-s\|\leq2\xi\}$.

Let $k_s$ be a positive integer such that $k_s\Delta_s/r$ and $k_s\Theta_s^\sharp/r$ are integral, where $r$ is as in Lemma \ref{uniformbound}.
Let us first show that there is a real number $0<\varepsilon\ll\xi$ such that $(C_t/\phi+1)\varepsilon(K_X+\Delta_v)+A$ is ample for
all $v\in\mcal S_\R$ such that $\|v-s\|<2\xi$, where $\|t-s\|=\varepsilon/k_s$. If $\Theta^\sharp$ is locally Lipschitz around $s$ this is straightforward.
Otherwise, assume $\Theta^\sharp$ is not locally Lipschitz around $s$ and assume we cannot find such $\varepsilon$. But then there is a sequence
$s_n\in(s,u]$ such that $\lim\limits_{n\rightarrow\infty}s_n=s$ and $C_{s_n}\|s_n-s\|\geq M$, where $M$ is a constant and $C_{s_n}\rightarrow\infty$.
Since a local Lipschitz constant is the maximum of local slopes of the concave function $\Theta^\sharp|_{[s,u]}$, we have that
$$\frac{\Theta_{s_n}^\sharp-\Theta_s^\sharp}{\|s_n-s\|}>C_{s_n}.$$
Therefore
$$\Theta_{s_n}^\sharp-\Theta_s^\sharp>C_{s_n}\|s_n-s\|\geq M$$
for all $n\in\N$, which contradicts Theorem \ref{lem:lipschitz}.

Increase $\varepsilon$ a bit, and pick $t\in(s,u]$ such that $\|s-t\|<\varepsilon/k_s$, the smallest rational subspace containing $t$ is precisely $W$
and $(C_t/\phi+1)\varepsilon(K_X+\Delta_v)+A$ is ample for all $v\in\mcal S_\R$ such that $\|v-s\|<2\varepsilon$. In particular, $\Theta^\sharp$ is
locally Lipschitz in a neighbourhood of $t$ contained in $W$. Furthermore, by changing $\phi$ slightly I can assume that
$\phi\leq\min\{\phi_v:v\in W,\|v-t\|\ll1\}$. Denote by $V$ the smallest rational affine space
containing $\Theta_s^\sharp$ and $\Theta_t^\sharp$, and let $r$ be as in Lemma \ref{uniformbound}. Then by Lemma \ref{lem:approximation}
there exist rational points $(t_i,\Theta_{t_i}')\in W\times V$ and integers $k_{t_i}\gg0$ such that:
\begin{enumerate}
\item we may write $t=\sum r_{t_i}t_i$, $\Delta_t=\sum r_{t_i}\Delta_{t_i}$ and $\Theta_t^\sharp=\sum r_{t_i}\Theta_{t_i}'$, where
$r_{t_i}>0$ and $\sum r_{t_i}=1$,
\item $t_1=s$, $\Theta_{t_1}'=\Theta_{t_1}^\sharp$, $k_{t_1}=k_s$,
\item $k_{t_i}\Delta_{t_i}/r$ are integral and $\|t-t_i\|<\varepsilon/k_{t_i}$ for all $i$,
\item $\Theta_{t_i}'\leq\Omega_{t_i}$, $k_{t_i}\Theta_{t_i}'/r$ are integral and $\|\Theta_t^\sharp-\Theta_{t_i}'\|<\phi\varepsilon/k_{t_i}$.
\end{enumerate}
Observe that similarly as in Step 2 we have $S\not\subset\B(K_X+\Delta_{t_i})$ for all $i$, and therefore by Theorem \ref{thm:inclusion},
$$|k_{t_i}(K_S+\Theta_{t_i}')|+k_{t_i}(\Omega_{t_i}-\Theta_{t_i}')\subset|k_{t_i}(K_X+\Delta_{t_i})|_S$$
for all $i$. Then we finish as in Step 2.\\[2mm]
{\em Step 4.}
Assume in this step that $s\in\mcal S_\R$ is a non-rational point and fix $u\in R$.
By Step 1 there is a rational cone $\mcal{C}=\sum_{i=1}^k\R_+g_i$ with $g_i\in\mcal S_\Q$ and $k>1$ such that $\lambda_Z^\sharp$ is
linear on $\mcal{C}$ and $s=\sum\alpha_ig_i$ with all $\alpha_i>0$. Consider the rational point $g=\sum_{i=1}^kg_i$.
Then by Step 2 there is a point $s'=\alpha g+\beta u$ with $\alpha,\beta>0$ such that
the map $\lambda_Z^\sharp$ is linear on the cone $R_+g+\R_+s'$. Now we have
$$\lambda_Z^\sharp\Big(\sum g_i+s'\Big)=\lambda_Z^\sharp(g+s')=\lambda_Z^\sharp(g)+\lambda_Z^\sharp(s')=\sum \lambda_Z^\sharp(g_i)+\lambda_Z^\sharp(s'),$$
so the map $\lambda_Z^\sharp|_{\mcal{C}+\R_+s'}$ is linear by \cite[Lemma 2.6]{Laz07}. Taking $\mu=\max\limits_i\{\frac{\alpha}{\alpha_i\beta}\}$
and taking a point $\hat u=\mu s+u$ in the relative interior of $\R_+s+R$, it is easy to check that
$$\hat u=\sum(\mu\alpha_i-\textstyle\frac\alpha\beta)t_i+\frac1\beta s'\in\mcal{C}+\R_+s',$$
so the map $\lambda_Z^\sharp|_{\R_+s+\R_+\hat u}$ is linear.\\[2mm]
{\em Step 5.}
Finally, let $H$ be any $2$-plane in $\R^\ell$. Then by the previous steps, for every ray $R\subset\mcal S_\R\cap H$ there is a polyhedral cone
$\mcal{C}_R$ with $R\subset\mcal{C}_R\subset\mcal S_\R\cap H$ such that there is a polyhedral decomposition
$\mcal{C}_R=\mcal{C}_{R,1}\cup\mcal{C}_{R,2}$ with $\lambda_Z^\sharp|_{\mcal{C}_{R,1}}$ and $\lambda_Z^\sharp|_{\mcal{C}_{R,2}}$
being linear maps, and if $R\subset\relint(\mcal S_\R\cap H)$, then $R\subset\relint\mcal{C}_R$.

Let $S^{\ell-1}$ be the unit sphere. Restricting to the compact set
$S^{\ell-1}\cap\mcal S_\R\cap H$ we see that $\lambda_Z^\sharp|_{\mcal S_\R\cap H}$ is piecewise linear.
\end{proof}

\section{Proof of the main result}\label{proofmain}

\begin{proof}[Proof of Theorem \ref{thm:main}]\mbox{}\\[2mm]
{\em Step 1.\/}
I first show that it is enough to prove the theorem in the case when $A$ is a general ample $\Q$-divisor and $(X,\Delta_i+A)$ is a log
smooth klt pair for every $i$.

Let $p$ and $k$ be sufficiently divisible positive integers such that all divisors $k(\Delta_i+pA)$ are very ample and $(p+1)kA$ is very ample.
Let $(p+1)kA_i$ be a general section of $|k(\Delta_i+pA)|$ and let $(p+1)kA'$ be a general section of $|(p+1)kA|$.
Set $\Delta_i'=\frac{p}{p+1}\Delta_i+A_i$. Then the pairs $(X,\Delta_i'+A')$ are klt and
$$(p+1)k(K_X+\Delta_i+A)\sim(p+1)k(K_X+\Delta_i'+A')=:D_i'$$
for every $i$. Then a truncation of $R(X;D_1,\dots,D_\ell)$ is isomorphic to $R(X;D_1',\dots,D_\ell')$, so it is enough to prove the latter
algebra is finitely generated.\\[2mm]
{\em Step 2.\/}
Therefore I can assume that $\Delta_i=\sum_{j=1}^N\delta_{ij}F_j$ with $\delta_{ij}\in[0,1)$. Write
$K_X+\Delta_i+A\sim_\Q\sum_{j=1}^Nf_{ij}'F_j\geq0$, where $F_j\neq A$ since $A$ is general.
By blowing up, and by possibly replacing the pair $(X,\Delta_i)$ by $(Y,\Delta_i')$ for some model $Y\rightarrow X$ as in Step 2 of the proof of
Theorem \ref{thm:2}, I can assume that the divisor $\sum_{j=1}^N F_j$ has simple normal crossings. Thus for every $i$,
$$K_X\sim_\Q-A+\sum\nolimits_{j=1}^N f_{ij}F_j,$$
where $f_{ij}=f_{ij}'-\delta_{ij}>-1$.

Let $\Lambda=\bigoplus_{j=1}^N\N F_j\subset\Div(X)$ be a simplicial monoid and denote $\mcal T=\{(t_1,\dots,t_\ell):t_i\geq0,\sum t_i=1\}\subset\R^\ell$.
For each $\tau=(t_1,\dots,t_\ell)\in\mcal T$, denote $\delta_{\tau j}=\sum_i t_i\delta_{ij}$ and $f_{\tau j}=\sum_i t_if_{ij}$, and
observe that $K_X\sim_\R-A+\sum_j f_{\tau j}F_j$. Denote $\mcal B_\tau=\sum_{j=1}^N[\delta_{\tau j}+f_{\tau j},1+f_{\tau j}]F_j\subset\Lambda_\R$
and let $\mcal B=\bigcup_{\tau\in\mcal T}\mcal B_\tau$. It is easy to see that $\mcal B$ is a rational polytope: every point in $\mcal B$
is a barycentric combination of the vertices of $\mcal B_{\tau_1},\dots,\mcal B_{\tau_\ell}$, where $\tau_i$ are the standard basis vectors of
$\R^\ell$. Thus $\mcal C=\R_+\mcal B$ is a rational polyhedral cone.

For each $j=1,\dots,N$ fix a section $\sigma_j\in H^0(X,F_j)$ such that $\ddiv\sigma_j=F_j$.
Consider the $\Lambda$-graded algebra $\mathfrak{R}=\bigoplus_{s\in\Lambda}\mathfrak R_s\subset R(X;F_1,\dots,F_N)$ generated
by the elements of $R(X,\mcal C\cap\Lambda)$ and all $\sigma_j$; observe that $\mathfrak R_s=H^0(X,s)$ for every $s\in\mcal C\cap\Lambda$.
I claim that it is enough to show that $\mathfrak{R}$ is finitely generated.

To see this, assume $\mathfrak{R}$ is finitely generated and denote
$$\omega_i=rk_i\sum\nolimits_j(\delta_{ij}+f_{ij})F_j\in\Lambda$$
for $r$ sufficiently divisible and $i=1,\dots,\ell$. Set $\mcal G=\sum_i\R_+\omega_i\cap\Lambda$ and observe that $\omega_i\sim rD_i$.
Then by Lemma \ref{lem:1}(2) the algebra $R(X,\mcal C\cap\Lambda)$
is finitely generated, and therefore by Proposition \ref{pro:1} there is a finite rational polyhedral subdivision
$\mcal G_\R=\bigcup_k\mcal G_k$ such that the map $\bMob_{\iota|\mcal G_k\cap\Lambda}$ is additive up to truncation for every $k$,
where $\iota\colon\Lambda\rightarrow\Lambda$ is the identity map.

Let $\omega_1',\dots,\omega_q'$ be generators of $\mcal G$ such that $\omega_i'=\omega_i$ for $i=1,\dots,\ell$, and let
$\pi\colon\bigoplus_{i=1}^q\N\omega_i'\rightarrow\mcal G$ be the natural projection.
Then the map $\bMob_{\pi|\pi^{-1}(\mcal G_k\cap\Lambda)}$ is additive up to truncation for every $k$,
and thus $R(X,\pi(\bigoplus_{i=1}^q\N\omega_i'))$ is finitely generated by Lemma \ref{lem:1}(3). Therefore
$R(X,\pi(\bigoplus_{i=1}^\ell\N\omega_i))\simeq R(X;rD_1,\dots,rD_\ell)$ is finitely generated by Lemma \ref{lem:1}(2),
thus $R(X;D_1,\dots,D_\ell)$ is finitely generated by Lemma \ref{lem:1}(1).\\[2mm]
{\em Step 3.\/}
Therefore it suffices to prove that $\mathfrak R$ is finitely generated.
Take a point $\sum_j(f_{\tau j}+b_{\tau j})F_j\in\mcal B\backslash\{0\}$; in particular $b_{\tau j}\in[\delta_{\tau j},1]$.
Setting
$$r_\tau=\max_{j=1}^N\Big\{\frac{f_{\tau j}+b_{\tau j}}{f_{\tau j}+1}\Big\}\quad\text{and}\quad
b_{\tau j}'=-f_{\tau j}+\frac{f_{\tau j}+b_{\tau j}}{r_\tau},$$
we have
\begin{equation}\label{eq:3}
\sum\nolimits_j(f_{\tau j}+b_{\tau j})F_j=r_\tau\sum\nolimits_j(f_{\tau j}+b_{\tau j}')F_j.
\end{equation}
Observe that $r_\tau\in(0,1]$, $b_{\tau j}'\in[b_{\tau j},1]$ and there exists $j_0$ such that $b_{\tau j_0}'=1$.
For every $j=1,\dots,N$, let
$$\mcal F_{\tau j}=(1+f_{\tau j})F_j+\sum\nolimits_{k\neq j}[\delta_{\tau k}+f_{\tau k},1+f_{\tau k}]F_k,$$
and set $\mcal F_j=\bigcup_{\tau\in\mcal T}\mcal F_{\tau j}$, which is a rational polytope. Then $\mcal C_j=\R_+\mcal F_j$ is a rational polyhedral cone,
and \eqref{eq:3} shows that $\mcal C=\bigcup_j\mcal C_j$. Furthermore, since $\sum_j(f_{\tau j}+b_{\tau j})F_j\sim_\R K_X+\sum_jb_{\tau j}F_j+A$ for
$\tau\in\mcal T$, for every $j$ and for every $s\in\mcal C_j\cap\Lambda$ there is $r_s\in\Q_+$ such that
$s\sim_\Q r_s(K_X+F_j+\Delta_s+A)$ where $\Supp\Delta_s\subset\sum_{k\neq j}F_k$ and the pair $(X,F_j+\Delta_s+A)$ is log canonical.\\[2mm]
{\em Step 4.\/}
Assume that the restricted algebra $\res_{F_j}R(X,\mcal C_j\cap\Lambda)$ is finitely generated for every $j$. I will show that
then $\mathfrak R$ is finitely generated.

Let $V=\sum_{j=1}^N\R F_j\simeq\R^N$, and let $\|\cdot\|$ be the Euclidean norm on $V$. By compactness there is a constant $C$ such that every
$\mcal F_j\subset V$ is contained in the closed ball centred at the origin with radius $C$. Let $\deg$ denote the total degree function on $\Lambda$,
i.e.\ $\deg(\sum_{j=1}^N\alpha_jg_j)=\sum_{j=1}^N\alpha_j$; it induces the degree function on elements of $\mathfrak R$.
Let $M$ be a positive integer such that, for each $j$, $\res_{F_j}R(X,\mcal C_j\cap\Lambda)$ is generated by
$\{\sigma_{|F_j}:\sigma\in R(X,\mcal C_j\cap\Lambda),\deg\sigma\leq M\}$, and such that $M\geq CN^{1/2}\max\limits_{i,j}\{\frac{1}{1-\delta_{ij}}\}$.
By H\"{o}lder's inequality we have $\|s\|\geq N^{-1/2}\deg s$ for all $s\in\mcal C\cap\Lambda$, and thus
$$\|s\|/C\geq\max_{i,j}\Big\{\frac{1}{1-\delta_{ij}}\Big\}$$
for all $s\in\mcal C\cap\Lambda$ with $\deg s\geq M$. Let $\mcal H$ be a finite set of generators of the finite dimensional vector space
$$\bigoplus_{s\in\mcal C\cap\Lambda,\deg s\leq M}H^0(X,s)$$
such that for every $j$, the set $\{\sigma_{|F_j}:\sigma\in\mcal H\}$ generates $\res_{F_j}R(X,\mcal C_j\cap\Lambda)$.
I claim that $\mathfrak R$ is generated by $\{\sigma_1,\dots,\sigma_N\}\cup\mcal H$, with $\sigma_j$ as in Step 2.

To that end, take any section $\sigma\in\mathfrak R$ with $\deg\sigma>M$. By definition, possibly by considering monomial parts of $\sigma$ and dividing
$\sigma$ by a suitable product of sections $\sigma_j$, I can assume that $\sigma\in R(X,\mcal C\cap\Lambda)$. Furthermore, by Step 3
there exists $w\in\{1,\dots,N\}$ such that $\sigma\in R(X,\mcal C_w\cap\Lambda)$, thus there is $\tau\in\mcal T\cap\Q^\ell$
such that $\sigma\in H^0(X,r_\sigma\sum_j(f_{\tau j}+b_{\tau j})F_j)$ with $b_{\tau w}=1$. Observe that
$r_\sigma\geq\max\limits_{i,j}\{\frac{1}{1-\delta_{ij}}\}$ since $\|\sum_j(f_{\tau j}+b_{\tau j})F_j\|\leq C$, and in particular
$\frac{r_\sigma-1}{r_\sigma}\geq\delta_{\tau w}$ for every $\tau\in\mcal T$.

Therefore by assumption there are elements $\theta_1,\dots,\theta_z\in\mcal H$ and a polynomial
$\varphi\in\C[X_1,\dots,X_z]$ such that
$\sigma_{|F_w}=\varphi(\theta_{1|F_w},\dots,\theta_{z|F_w})$. Therefore by \eqref{eq:1} in Remark \ref{rem:1},
$$(\sigma-\varphi(\theta_1,\dots,\theta_z))/\sigma_w\in H^0\big(X,r_\sigma\sum\nolimits_j(f_{\tau j}+b_{\tau j})F_j-F_w\big).$$
Since
$$r_\sigma\sum_j(f_{\tau j}+b_{\tau j})F_j-F_w=r_\sigma\Big((f_{\tau w}+{\textstyle\frac{r_\sigma-1}{r_\sigma}})F_w
+\sum_{j\neq w}(f_{\tau j}+b_{\tau j})F_j\Big),$$
we have $r_\sigma\sum_j(f_{\tau j}+b_{\tau j})F_j-F_w\in\mcal C\cap\Lambda$. We finish by descending induction on $\deg\sigma$.\\[2mm]
{\em Step 5.\/}
Therefore it remains to show that for each $j$, the restricted algebra $\res_{F_j}R(X,\mcal C_j\cap\Lambda)$
is finitely generated.

To that end, choose a rational $0<\varepsilon\ll1$ such that $\varepsilon\sum_{k\in I}F_k+A$ is ample for every $I\subset\{1,\dots,N\}$,
and let $A_I\sim_\Q\varepsilon\sum_{k\in I}F_k+A$ be a very general ample $\Q$-divisor.
Fix $j$, and for $I\subset\{1,\dots,N\}\backslash\{j\}$ let
\begin{align*}
\mcal F_{\tau j}^I=(1+f_{\tau j})F_j&+\sum_{k\in I}[1-\varepsilon+f_{\tau k},1+f_{\tau k}]F_k\\
&+\sum_{k\notin I\cup\{j\}}[\delta_{\tau k}+f_{\tau k},1-\varepsilon+f_{\tau k}]F_k.
\end{align*}
Set $\mcal F_j^I=\bigcup_{\tau\in\mcal T}\mcal F_{\tau j}^I$; these are rational polytopes such that
$\mcal F_j=\bigcup_{I\subset\{1,\dots,N\}\backslash\{j\}}\mcal F_j^I$, and therefore $\mcal C_j^I=\R_+\mcal F_j^I$ are rational polyhedral cones
such that $\mcal C_j=\bigcup_{I\subset\{1,\dots,N\}\backslash\{j\}}\mcal C_j^I$.
Furthermore, for every $s\in\mcal C_j^I\cap\Lambda$ we have $s\sim_\Q r_s(K_X+F_j+\Delta_s+A)\sim_\Q r_s(K_X+F_j+\Delta_s'+A_I)$,
where $\Delta_s'=\Delta_s-\varepsilon\sum_{k\in I}F_k\geq0$ and $\lfloor F_j+\Delta_s'+A_I\rfloor=F_j$.

Therefore it is enough to prove that $\res_{F_j}R(X,\mcal C_j^I\cap\Lambda)$ is finitely generated for every $I$.
Fix $I$ and let $h_1,\dots,h_m$ be generators of $\mcal C_j^I\cap\Lambda$. Similarly as in Step 1 of the proof of Theorem \ref{thm:2},
it is enough to prove that the restricted algebra $\res_{F_j}R(X;h_1,\dots,h_m)$ is finitely generated.
For $p$ sufficiently divisible, by the argument above we have $ph_v\sim \rho_v(K_X+F_j+B_v+A_I)=:H_v$, where
$\lceil B_v\rceil\subset\sum_{k\neq j}F_k$, $\lfloor B_v\rfloor=0$, $\rho_v\in\N$ and $A_I$ is a very general ample $\Q$-divisor.
Therefore it is enough to show that $\res_{F_j}R(X;H_1,\dots,H_m)$ is finitely generated by Lemma \ref{lem:1}(1).
But this follows from Theorem \ref{thm:2} and the proof is complete.
\end{proof}

\begin{proof}[Proof of Theorem \ref{cor:can}]
By \cite[Theorem 5.2]{FM00} and by induction on $\dim X$, we may assume $K_X+\Delta$ is big.
Write $K_X+\Delta\sim_\Q B+C$ with $B$ effective and $C$ ample. Let $f\colon Y\rightarrow X$ be a log resolution of $(X,\Delta+B+C)$ and let $H$
be an effective $f$-exceptional divisor such that $f^*C-H$ is ample. Then writing $K_Y+\Gamma=f^*(K_X+\Delta)+E$, where $\Gamma=\B(X,\Delta)_Y$,
we have that $R(Y,K_Y+\Gamma)$ and $R(X,K_X+\Delta)$ have isomorphic truncations. Since $K_Y+\Gamma\sim_\Q(f^*B+H+E)+(f^*C-H)$,
we may assume from the start that $\Supp(\Delta+B+C)$ has simple normal crossings. Let $\varepsilon$ be a small positive rational number and set
$\Delta'=(\Delta+\varepsilon B)+\varepsilon C$. Then $K_X+\Delta'\sim_\Q(\varepsilon+1)(K_X+\Delta)$, and $R(X,K_X+\Delta)$ and
$R(X,K_X+\Delta')$ have isomorphic truncations, so the result follows from Theorem \ref{thm:main}.
\end{proof}

\bibliography{biblio}

\newcommand{\etalchar}[1]{$^{#1}$}
\providecommand{\bysame}{\leavevmode\hbox to3em{\hrulefill}\thinspace}
\providecommand{\MR}{\relax\ifhmode\unskip\space\fi MR }
% \MRhref is called by the amsart/book/proc definition of \MR.
\providecommand{\MRhref}[2]{%
  \href{http://www.ams.org/mathscinet-getitem?mr=#1}{#2}
}
\providecommand{\href}[2]{#2}
\begin{thebibliography}{BCHM06}

\bibitem[BCHM06]{BCHM}
C.~Birkar, P.~Cascini, C.~D. Hacon, and J.~M\textsuperscript{c}Kernan,
  \emph{Existence of minimal models for varieties of log general type},
  arXiv:math.AG/0610203v2\setbox0=\hbox{2006}.

\bibitem[Bou04]{Bou04}
S.~Boucksom, \emph{Divisorial {Z}ariski decompositions on compact complex
  manifolds}, Ann. Sci. \'Ecole Norm. Sup. (4) \textbf{37} (2004), no.~1,
  45--76.

\bibitem[Bum04]{Bum04}
D.~Bump, \emph{Lie groups}, Graduate Texts in Mathematics, vol. 225,
  Springer-Verlag, New York, 2004.

\bibitem[Cor07]{Cor07}
A.~Corti, \emph{3-fold flips after {S}hokurov}, Flips for 3-folds and 4-folds
  (Alessio Corti, ed.), Oxford Lecture Series in Mathematics and its
  Applications, vol.~35, Oxford University Press, 2007, pp.~18--48.

\bibitem[ELM{\etalchar{+}}06]{ELMNP}
L.~Ein, R.~Lazarsfeld, M.~Musta{\c{t}}{\u{a}}, M.~Nakamaye, and M.~Popa,
  \emph{Asymptotic invariants of base loci}, Ann. Inst. Fourier (Grenoble)
  \textbf{56} (2006), no.~6, 1701--1734.

\bibitem[FM00]{FM00}
O.~Fujino and S.~Mori, \emph{A canonical bundle formula}, J. Differential Geom.
  \textbf{56} (2000), no.~1, 167--188.

\bibitem[Hac08]{Hac08}
C.~D. Hacon, \emph{Higher dimensional {M}inimal {M}odel {P}rogram for varieties
  of log general type}, Oberwolfach preprint, 2008.

\bibitem[HM05]{HM05}
C.~D. Hacon and J.~M\textsuperscript{c}Kernan, \emph{On the existence of
  flips}, arXiv:math.AG/0507597v1\setbox0=\hbox{2005}.

\bibitem[HM08]{HM08}
\bysame, \emph{Existence of minimal models for varieties of log general type
  {II}}, arXiv:0808.1929v1\setbox0=\hbox{2008}.

\bibitem[KM98]{KM98}
J.~Koll{\'a}r and S.~Mori, \emph{Birational geometry of algebraic varieties},
  Cambridge Tracts in Mathematics, vol. 134, Cambridge University Press,
  Cambridge, 1998.

\bibitem[Laz07]{Laz07}
V.~Lazi\'c, \emph{On {S}hokurov-type b-divisorial algebras of higher rank},
  arXiv:0707.4414v2\setbox0=\hbox{2007}.

\bibitem[Nak04]{Nak04}
N.~Nakayama, \emph{Zariski-decomposition and abundance}, MSJ Memoirs, vol.~14,
  Mathematical Society of Japan, Tokyo, 2004.

\bibitem[P{\u{a}}u08]{Pau08}
M.~P{\u{a}}un, \emph{Relative critical exponents, non-vanishing and metrics
  with minimal singularities}, arXiv:0807.3109v1\setbox0=\hbox{2008}.

\bibitem[Siu98]{Siu98}
Y.-T. Siu, \emph{Invariance of plurigenera}, Invent. Math. \textbf{134} (1998),
  no.~3, 661--673.

\end{thebibliography}
\pagestyle{plain}
\end{document}